

\documentclass[10pt, a4paper]{article} 


\oddsidemargin 0cm
\evensidemargin 0cm


\textwidth 15.5cm
\topmargin -1cm
\parindent 0cm
\textheight 24cm
\parskip 1mm

\usepackage{amsmath,amssymb,amsfonts} 
\usepackage{amsthm}
\usepackage{amsthm}

\usepackage[latin1]{inputenc}




\theoremstyle{plain}
\newtheorem{thm}{Theorem}[section]
\newtheorem{lem}[thm]{Lemma}

\newtheorem{prop}[thm]{Proposition}
\theoremstyle{definition}
\newtheorem{defi}[thm]{Definition}

\theoremstyle{remark}

\newtheorem{rem}[thm]{Remark}
\newtheorem{com}[thm]{Comment}

\renewenvironment{proof}{\textsc{Proof:}}{\qed}



\title{A BSDE approach to stochastic differential games with incomplete information
  }
 \author{Christine Gr\"un \footnote{Laboratoire de Mathematiques de Brest
UMR 6205, 6 avenue Le Gorgeu CS 93837, 29238 BREST cedex 3, France; email: christine.gruen@univ-brest.fr.
Supported by the Marie Curie Initial Training Network (ITN) project: ``Deterministic and Stochastic Controlled Systems and
Application", FP7-PEOPLE-2007-1-1-ITN, No. 213841-2.}
}

\begin{document}
\maketitle


\begin{abstract}
We consider a two-player zero-sum stochastic differential game in which one of the players has a private information on the game. Both players observe each other, so that the non-informed player can try to guess his missing information. Our aim is to quantify the amount of information the informed player has to reveal in order to play optimally: to do so, we show that the value function of this zero-sum game can be rewritten as a minimization problem over some martingale measures with a payoff given by the solution of a backward stochastic differential equation.
\end{abstract}

\emph{Keywords.} Stochastic Differential Games, Backward Stochastic Differential Equations, Dynamic Programming, Viscosity Solutions\\

\textit{2000 AMS subject classification}: 93E05, 91A05, 90C39, 60G44, 49N70 


\section{Introduction}
In this paper we consider a two player zero-sum game, where the underlying dynamics are given by a diffusion with controlled drift but uncontrolled (non-degenerate) volatility. The game can take place in $I$ different scenarios for the running cost and the terminal outcome as in a classical stochastic differential game. Before the game starts one scenario is picked with the probability $p=(p_{i})_{i\in\{1,\ldots,I\}}\in\Delta(I)$. The information is transmitted only to Player 1. So at the beginning he knows in which scenario he is playing, while Player 2 only knows the probability $p$. It is assumed that both players observe the actions of the other one, so Player 2 might infer from the actions of his opponent in which scenario the game is actually played.

It has been proved in Cardaliaguet and Rainer \cite{CaRa2} that this game has a value. To investigate the game under the perspective of information transmission we establish an alternative representation of this value. We achieve this by directly modeling the amount of information the informed player reveals during the game. To that end we enlarge the canonical Wiener space to a space which carries besides a Brownain motion, c\`adl\`ag martingales with values in $\Delta(I)$. These martingales can be interpreted as possible beliefs of the uninformed player, i.e. the probability in which scenario the game is played in according to his information at time $t$.

The very same ansatz has been used in the case of deterministic differential games in Cardaliaguet and Rainer \cite{CaRa1}, while the original idea of the so called a posteriori martingale can already be found in the classical work of Aumann and Maschler (see \cite{AuMaS}). Bearing in mind the ideas of Hamad\`ene and Lepeltier \cite{HaLe}  we show that the value of our game can be represented by minimizing the solution of a backward stochastic differential equation (BSDE) with respect to possible beliefs of the uninformed player.

A cornerstone in the investigation of stochastic differential games has been laid by Fleming and Souganidis in \cite{FS} who extend the results of Evans and Souganidis \cite{ES} to a stochastic framework. Therein it is shown that under Isaacs condition the value function of a stochastic differential game is given as the unique viscosity solution of a Hamilton-Jacobi-Isaacs (HJI) equation.

The theory of BSDE, which was originally developped by Peng \cite{P} for stochastic control theory, has been introduced to stochastic differential games by Hamad\`ene and Lepeltier \cite{HaLe} and Hamad\`ene, Lepeltier and Peng \cite{HaLePe}. The former results have been extended to cost functionals defined by controlled BSDEs in Buckdahn and Li \cite{BuLi}, where the admissible control processes are allowed to depend on events occurring before the beginning of the game.

The study of games with incomplete information has its starting point in the pioneering work of Aumann and Maschler (see \cite{AuMaS} and references given therein). The extension to stochastic differential games has been given in Cardaliaguet and Rainer \cite{CaRa2}. The proof is accomplished introducing the notion of dual viscosity solutions to the HJI equation of a usual stochastic differential game, where the probability $p$ just appears as an additional parameter. A different unique characterization via the viscosity solution of the HJI equation with an obstacle in the form of a convexity constraint in $p$ is given in Cardaliaguet \cite{Ca}. We use this latter characterization in order to prove our main representation result.

The outline of the paper is as follows. In section 2 we describe the game and restate the results of \cite{CaRa2} and \cite{Ca} which build the basis for our investigation. In section 3 we give our main theorem and derive the optimal behaviour for the informed player under some smoothness condition. The whole section 4 is devoted to the proof of the main theorem, while the appendix provides some proofs of extensions to classical BSDE results, which are necessary for our case.

\section{Setup}

\subsection{Formal description of the game}

Let $\mathcal{C}([0,T];\mathbb{R}^d)$ be the set of continuous functions from $\mathbb{R}$ to $\mathbb{R}^d$, which are constant on $(-\infty,0]$ and on $[T,+\infty)$. We denote by $B_s(\omega_B)=\omega_B(s)$ the coordinate mapping on $\mathcal{C}([0,T];\mathbb{R}^d)$ and define $\mathcal{H}=(\mathcal{H}_s)$ as the filtration generated by $s \mapsto B_s$. We denote $\Omega_t=\{\omega\in\mathcal{C}([t,T];\mathbb{R}^d)$ and $\mathcal{H}_{t,s}$ the $\sigma$-algebra generated by paths up to time $s$ in $\Omega_t$.  Furthermore we provide $\mathcal{C}([0,T];\mathbb{R}^d)$ with the Wiener measure $\mathbb{P}^0$ on $(\mathcal{H}_s)$. 

In the following we investigate a two-player zero-sum differential game starting at a time $t\geq0$ with terminal time $T$. The dynamics are given by a controlled diffusion on $(\mathcal{C}([t,T];\mathbb{R}^d),(\mathcal{H}_{t,s})_{s\in[t,T]}, \mathcal{H},\mathbb{P}^0)$, i.e. for $t\in[0,T], x\in\mathbb{R}^d$
\begin{eqnarray}
 dX^{t,x,u,v}_s=b(s,X^{t,x,u,v}_s,u_s,v_s)ds+\sigma(s,X^{t,x,u,v}_s)dB_s\ \ \ \ 
 X^{t,x}_{t}=x.
\end{eqnarray}

We assume that the controls of the players $u,v$ can only take their values in some set $U$, $V$ respectively, where $U,V$ are compact subsets of some finite dimensional spaces.

Let $\Delta(I)$ denote the simplex of $\mathbb{R}^I$. The objective to optimize is characterized by
\begin{itemize}
	\item[(i)] running costs: $(l_i)_{i\in\{1,\ldots, I\}}:[0,T]\times\mathbb{R}^d\times U \times V \rightarrow \mathbb{R}$
	\item[(ii)] terminal payoffs: $(g_i)_{i\in\{1,\ldots, I\}}:\mathbb{R}^d\rightarrow\mathbb{R}$,
\end{itemize}
which are chosen with probability $p\in \Delta(I)$ before the game starts. Player 1 chooses his control to minimize, Player 2 chooses his control to maximize the expected payoff. We assume both players observe their opponents control. However Player 1 knows which payoff he maximizes, Player 2 just knows the respective probabilities $p_{i}$ for scenario $i\in\{1,\ldots, I\}$.

The following will be the standing assumption throughout the paper.\\
{\bf Assumption (H)}
\begin{itemize}
	\item[(i)] $b:[0,T]\times\mathbb{R}^d\times U \times V \rightarrow \mathbb{R}^d$ is bounded and continuous in all its variables and Lipschitz continuous with respect to $(t,x)$ uniformly in $(u,v)$.
	\item[(ii)] For $1\leq k,l\leq d$ the function $\sigma_{k,l}:[0,T]\times\mathbb{R}^d \rightarrow \mathbb{R}$ is bounded and Lipschitz continuous with respect to $(t,x)$. For any $(t,x)\in[0,T]\times\mathbb{R}^d$ the matrix $\sigma^*(t,x)$ is non-singular and $(\sigma^*(t,x))^{-1}$ is bounded and Lipschitz continuous with respect to $(t,x)$.
	\item[(iii)] $(l_i)_{i\in I}:[0,T]\times\mathbb{R}^d\times U \times V \rightarrow \mathbb{R}$ is bounded and continuous in all its variables and Lipschitz continuous with respect to $(t,x)$ uniformly in $(u,v)$. $(g_i)_{i\in I}:\mathbb{R}^d \rightarrow \mathbb{R}$ is bounded and uniformly Lipschitz continuous.
	\item[(iv)] Isaacs condition: for all $(t,x,\xi,p)\in[0,T]\times\mathbb{R}^d\times\mathbb{R}^d\times\Delta(I)$ 
	\begin{equation}
	\begin{array}{rcl}
		&&\inf_{u\in U}\sup_{v\in V} \left\{\langle b(t,x,u,v),\xi\rangle+\sum_{i=1}^{I}p_il_i(t,x,u,v)\right\}\\
		\ \\
		&&\ \ \ \ =\sup_{v\in V} \inf_{u\in U}\left\{\langle b(t,x,u,v),\xi\rangle+\sum_{i=1}^{I}p_il_i(t,x,u,v)\right\}=:H(t,x,\xi,p).
	\end{array}
	\end{equation}
\end{itemize}

By assumption (H) the Hamiltonian $H$ is  Lipschitz in $(\xi,p)$ uniformly in $(t,x)$ and Lipschitz in $(t,x)$ with Lipschitz constant $c(1+|\xi|)$, i.e. it holds for all $t,t'\in[0,T]$, $x,x'\in\mathbb{R}^d$, $\xi,\xi'\in\mathbb{R}^d$, $p,p'\in\Delta(I)$
\begin{eqnarray}
&&|H(t,x,\xi,p)|\leq c (1+|\xi|)
\end{eqnarray}
and
\begin{eqnarray}
&&|H(t,x,\xi,p)-H(t',x',\xi',p')|\leq c (1+|\xi|)(|x-x'|+|t-t'|)+c|\xi-\xi'|+ c |p-p'|.
\end{eqnarray}

\subsection{Strategies and value function}

\begin{defi}
For any $t\in[0,T[$ an admissible control $u=(u_s)_{s\in[t,T]}$ for Player 1 is a progressively measurable process with respect to the filtration $(\mathcal{H}_{t,s})_{s\in[t,T]}$ with values in $U$.
The set of admissible controls for Player 1 is denoted by $\mathcal{U}(t)$.\\
The definition for admissible controls $v=(v_s)_{s\in[t,T]}$ for Player 2 is similar. The set of admissible controls for Player 2 is denoted by $\mathcal{V}(t)$.
\end{defi}

In differential games with complete information  as in \cite{FS} it is sufficient, that one player chooses at the beginning an admissible control and the other one chooses the optimal reaction to it. In our case the uniformed player tries to infer from the actions of his opponent in which scenario the game is played and adapts his behavior to his beliefs. Thus a permanent interaction has to be allowed. To this end it is necessary to restrict admissible strategies to have a small delay in time. 

\begin{defi}
A strategy for Player 1 at time $t\in[0,T[$ is a map $\alpha:[t,T]\times\mathcal{C}([t,T];\mathbb{R}^d)\times L^0([t,T];V)\rightarrow U$ which is nonanticipative with delay, i.e. there is $\delta>0$ such that for all $s\in[t,T]$ for any $f,f'\in\mathcal{C}([t,T];\mathbb{R}^d)$ and $g,g'\in L^0([t,T];V)$ it holds: $f=f'$  and $g=g'$ a.e. on $[t,s]$ $\Rightarrow$ $\alpha(\cdot,f,g)=\alpha(\cdot,f',g')$ a.e. on  $[t,s+\delta]$. The set of strategies for Player 1 is denoted by $\mathcal{A}(t)$.\\
The definition of strategies $\beta: [t,T]\times\mathcal{C}([t,T];\mathbb{R}^d)\times L^0([t,T];U)\rightarrow V$ for Player 2 is similar. The set of strategies for Player 2 is denoted by $\mathcal{B}(t)$.
\end{defi}

Next we state a slight modification of Lemma 5.1. \cite{CaRa2}
\begin{lem}
One can associate to each pair of strategies $(\alpha,\beta)\in\mathcal{A}(t)\times\mathcal{B}(t)$ a unique couple of admissible controls $(u,v)\in\mathcal{U}(t)\times\mathcal{V}(t)$, such that for all $\omega\in\mathcal{C}([t,T];\mathbb{R}^d)$
\[\alpha(s,\omega,v(\omega))=u_s(\omega)\ \ \ \ \textnormal{ \textit{and}}\ \ \ \ \beta(s,\omega,u(\omega))=v_s(\omega)\ .\]
\end{lem}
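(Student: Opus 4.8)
The plan is to build the pair $(u,v)$ by a forward, step-by-step construction on successive small time intervals, exploiting the fact that both strategies are nonanticipative with a strictly positive delay; this is what turns the implicit fixed-point relation into an explicit recursion. Let $\delta_\alpha,\delta_\beta>0$ be the delays associated with $\alpha$ and $\beta$, set $\delta=\min(\delta_\alpha,\delta_\beta)>0$, and put $t_k=t+k\delta$ for $k=0,1,\dots,N$, where $N$ is the first index with $t_N\geq T$. Since $\delta$ is fixed and $[t,T]$ is bounded, $N$ is finite, which is precisely what makes the recursion terminate.

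First I would treat the initial interval $[t_0,t_1]=[t,t+\delta]$. Applying the delay property with $s=t$, any two controls $g,g'$ agree a.e. on the null set $[t,t]$, hence $\alpha(\cdot,\omega,g)=\alpha(\cdot,\omega,g')$ a.e. on $[t,t+\delta]$ for every $\omega$. Thus the restriction of $\alpha(\cdot,\omega,\cdot)$ to $[t,t+\delta]$ does not depend on its last argument, so I can define $u_s(\omega):=\alpha(s,\omega,g_0)$ for an arbitrary fixed $g_0$, unambiguously on this interval; symmetrically I set $v_s(\omega):=\beta(s,\omega,h_0)$. By construction these already satisfy the two fixed-point identities on $[t,t+\delta]$, since neither strategy actually reads the opponent's control there.

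For the inductive step I would assume $u,v$ have been defined on $[t,t_k]$ and are consistent with both identities there. For $s\in[t_k,t_{k+1}]$, the delay property with $s=t_k$ shows that $\alpha(s,\omega,g)$ depends only on $g$ restricted to $[t,t_k]$; taking $g$ to be any extension in $L^0([t,T];V)$ of the already-constructed trajectory $v(\omega)|_{[t,t_k]}$, I set $u_s(\omega):=\alpha(s,\omega,g)$, which is well defined a.e. and independent of the extension. I define $v_s(\omega)$ symmetrically from $\beta$ and $u(\omega)|_{[t,t_k]}$. Because $\alpha$ and $\beta$ on $[t_k,t_{k+1}]$ only see past values of the opponent, which are exactly the values already fixed, the identities $\alpha(s,\omega,v(\omega))=u_s(\omega)$ and $\beta(s,\omega,u(\omega))=v_s(\omega)$ persist on $[t,t_{k+1}]$. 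After $N$ steps $(u,v)$ is defined on all of $[t,T]$ and solves the system.

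Uniqueness follows from the same decoupling: if $(u,v)$ and $(\tilde u,\tilde v)$ both solve the system, then on $[t,t+\delta]$ the delay property forces $u=\tilde u$ and $v=\tilde v$ a.e. (the values there being independent of the opponent's control), and inductively, agreement on $[t,t_k]$ forces agreement on $[t,t_{k+1}]$ since each strategy reads only past, already-matched values. The one point requiring real care, and where I expect the actual work to lie, is verifying that the constructed $u,v$ are genuinely admissible, i.e. progressively measurable with respect to $(\mathcal{H}_{t,s})_{s\in[t,T]}$; this rests on the measurability built into the definition of strategies and is handled exactly as in the proof of Lemma 5.1 of \cite{CaRa2}. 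I would also note that the finitely many $\mathbb{P}^0$-null sets produced by the a.e. equalities can be discarded without disturbing progressive measurability, so the recursion indeed yields a bona fide pair in $\mathcal{U}(t)\times\mathcal{V}(t)$.
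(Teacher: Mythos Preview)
Your argument is correct and is precisely the approach indicated in the paper: the proof is a fixed-point construction exploiting the delay property of the strategies, which is exactly the forward recursion on intervals of length $\delta$ that you carry out. The paper itself only records the one-line sketch ``the proof is done via a fixed point argument using the delay property of the strategies'' and refers to Lemma~5.1 of \cite{CaRa2}, so your write-up simply fills in those details.
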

The proof is done via a fixed point argument using the delay property of the strategies.\\

Furthermore it is crucial that the players are allowed to choose their strategies with a certain additional randomness. Intuitively this can be explained by the incentive of the players to hide their information. Thus for the evaluation of a game with incomplete information we introduce random strategies. To this end let $\mathcal{I}$ denote a set of probability spaces which is non trivial and stable by finite product.

\begin{defi}
A random strategy for Player 1 at time $t\in[0,T[$ is a a pair $((\Omega_\alpha,\mathcal{G}_\alpha,\mathbb{P}_\alpha),\alpha)$, where $(\Omega_\alpha,\mathcal{G}_\alpha,\mathbb{P}_\alpha)$ is a probability space in $\mathcal{I}$ and $\alpha: [t,T]\times\Omega_\alpha\times\mathcal{C}([t,T];\mathbb{R}^d)\times L^0([t,T]; V)\rightarrow U$ satisfies
\begin{itemize}
\item[(i)] 
 	$\alpha$ is a measurable function, where $\Omega_\alpha$ is equipped with the $\sigma$-field $\mathcal{G}_\alpha$,
\item[(ii)] 
	there exists $\delta>0$ such that for all $s\in[t,T]$ and for any $f,f'\in\mathcal{C}([t,T];\mathbb{R}^d)$ and $g,g'\in L^0([t,T];V))$ it holds: 
        \center{$f=f'$  and $g=g'$ a.e. on $[t,s]$ $\Rightarrow$ $\alpha(\cdot,f,g)=\alpha(\cdot,f',g')$ a.e. on  $[t,s+\delta]$ for any $\omega\in\Omega_\alpha$.}
\end{itemize}
The set of random strategies for Player 1 is denoted by $\mathcal{A}^r(t)$.\\
The definition of random strategies $((\Omega_\beta,\mathcal{G}_\beta,\mathbb{P}_\beta),\beta)$, where $\beta: [t,T]\times \Omega_\beta \times\mathcal{C}([t,T];\mathbb{R}^d)\times L^0([t,T];U)\rightarrow V$ for Player 2 is similar. The set of random strategies for Player 2 is denoted by $\mathcal{B}^r(t)$.
\end{defi}

\begin{rem} Again one can associate to each couple of random strategies $(\alpha,\beta)\in\mathcal{A}^r(t)\times\mathcal{B}^r(t)$ for any $(\omega_\alpha,\omega_\beta)\in\Omega_\alpha\times\Omega_\beta$ a unique couple of admissible strategies $(u^{\omega_\alpha,\omega_\beta},v^{\omega_\alpha,\omega_\beta})\in\mathcal{U}(t)\times\mathcal{V}(t)$, such that for all $\omega\in\mathcal{C}([t,T];\mathbb{R}^d)$, $s\in[t,T]$
\begin{equation*}
\alpha(s,\omega_\alpha,\omega,v^{\omega_\alpha,\omega_\beta}(\omega))=u^{\omega_\alpha,\omega_\beta}_s(\omega)\ \ \ \  \textnormal{ and }\ \ \ \  \beta(s,\omega_\beta,\omega,u^{\omega_\alpha,\omega_\beta}(\omega))=v^{\omega_\alpha,\omega_\beta}_s(\omega)\ .
\end{equation*}
Furthermore $(\omega_\alpha,\omega_\beta)\rightarrow (u^{\omega_\alpha,\omega_\beta},v^{\omega_\alpha,\omega_\beta})$ is a measurable map, from $\Omega_\alpha\times\Omega_\beta$ equipped with the $\sigma$-field $\mathcal{G}_\alpha\otimes\mathcal{G}_\beta$  to $\mathcal{V}(t)\times\mathcal{U}(t)$ equipped with the  Borel $\sigma$-field associated to the $L^1$-distance.
\end{rem}

For any $(t,x,p)\in[0,T[\times\mathbb{R}^d\times\Delta(I)$, $\bar \alpha \in (\mathcal{A}^r(t))^I$, $\beta\in\mathcal{B}^r(t)$ we set
        		\begin{eqnarray}
        			J(t,x,p,\bar \alpha,\beta)=\sum_{i=1}^I p_i \ \mathbb{E}_{\bar \alpha_i,\beta}\left[\int_0^Tl_i(s,X_s^{t,x,\bar \alpha_i,\beta},  (\bar\alpha_i)_s,\beta_s)ds+g_i(X_T^{t,x,	\bar\alpha_i,\beta})\right],
		\end{eqnarray}
where (5) should be understood in the following way. As in Remark 2.5. we associate to ${\bar \alpha_i},\beta$ for any $(\omega_{\bar \alpha_i},\omega_\beta)\in\Omega_{\bar \alpha_i}\times\Omega_\beta$ the couple of controls $(u^{\omega_{\bar \alpha_i},\omega_\beta},v^{\omega_{\bar \alpha_i},\omega_\beta})$. The process $X^{t,x,\bar \alpha_i,\beta}$ is then defined for any $(\omega_{\bar \alpha_i},\omega_\beta)$ as solution to (1) with the associated controls. Furthermore $\mathbb{E}_{\bar \alpha_i,\beta}$ is the expectation on $\Omega_{\bar \alpha_i}\times\Omega_\beta\times\mathcal{C}([t,T];\mathbb{R}^d)$ with respect to the probability $\mathbb{P}_{\bar \alpha_i}\otimes\mathbb{P}_\beta\otimes\mathbb{P}^0$, where $\mathbb{P}^0$ denotes the Wiener measure on $\mathcal{C}([t,T];\mathbb{R}^d).$\\

Under assumption (H) the existence of the value of the game is proved in a more general setting in \cite{CaRa2}.

        	\begin{thm}
       		 For any $(t,x,p)\in[0,T[\times\mathbb{R}^d\times\Delta(I)$ the value of the game with incomplete information $V(t,x,p)$ is given by
			\begin{equation}
				\begin{array}{rcl}
		 			V(t,x,p) &=& \inf_{\bar \alpha \in (\mathcal{A}^r(t))^I}\sup_{\beta\in \mathcal{B}^r(t)} J(t,x,p,\bar \alpha,\beta)\\
					\ \\
					&=&\sup_{\beta\in \mathcal{B}^r(t)} \inf_{\bar \alpha \in (\mathcal{A}^r(t))^I} J(t,x,p,\bar \alpha,\beta).
				\end{array}
			\end{equation}
		        \end{thm}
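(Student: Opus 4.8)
The plan is to prove the two inequalities in the statement separately. Write $V^-(t,x,p)=\sup_{\beta\in\mathcal{B}^r(t)}\inf_{\bar\alpha\in(\mathcal{A}^r(t))^I}J(t,x,p,\bar\alpha,\beta)$ for the lower value and $V^+(t,x,p)=\inf_{\bar\alpha\in(\mathcal{A}^r(t))^I}\sup_{\beta\in\mathcal{B}^r(t)}J(t,x,p,\bar\alpha,\beta)$ for the upper value. The inequality $V^-\le V^+$ is immediate from interchanging $\inf$ and $\sup$, so all the content lies in the reverse inequality $V^+\le V^-$. Throughout I would keep $p$ as a frozen parameter and exploit the separable structure $J=\sum_i p_i(\cdots)$, which lets the informed player act differently in each scenario through the vector $\bar\alpha=(\bar\alpha_i)_{i}$.

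First I would establish the regularity that the subsequent PDE argument requires. Using the boundedness and Lipschitz hypotheses in (H) together with standard stability estimates for the state equation, both $V^+$ and $V^-$ are bounded and Lipschitz in $(t,x)$, and Lipschitz in $p$. Crucially, the averaging over scenarios combined with the freedom of Player~1 to choose the components $\bar\alpha_i$ independently forces $p\mapsto V^\pm(t,x,p)$ to be \emph{convex}; this is the continuous-time analogue of the Aumann--Maschler splitting, realized here through the randomized strategies of Definition~2.4. Next I would prove a dynamic programming principle for $V^+$ and $V^-$ over a subdivision $[t,t+h]\cup[t+h,T]$. This is the technically delicate step: because each strategy carries an auxiliary probability space $\Omega_\alpha,\Omega_\beta$ and a nonanticipativity delay, one must concatenate strategies across the subdivision while preserving measurability and the delay property, passing back and forth between controls and strategies via the measurable selection of Remark~2.5.

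The main obstacle is that the value is only \emph{convex} in $p$, and this convexity constraint obstructs a direct comparison argument for the associated Hamilton--Jacobi--Isaacs equation. To overcome it I would dualize in $p$: set $w^\pm(t,x,\hat p)=\sup_{p\in\Delta(I)}\{\langle\hat p,p\rangle-V^\pm(t,x,p)\}$. Conjugation is order reversing, so the free inequality $V^-\le V^+$ becomes $w^+\le w^-$. The dynamic programming principle for $V^\pm$ transforms into one for $w^\pm$, from which I would derive that $w^-$ is a viscosity subsolution and $w^+$ a viscosity supersolution of a common \emph{dual} HJI equation whose Hamiltonian is built from $H$ but no longer carries any obstacle or constraint. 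Here the Isaacs condition (H)(iv) is what guarantees a single Hamiltonian governs both values. Since the volatility is non-degenerate and the dual Hamiltonian inherits the Lipschitz bounds recorded after (H), a standard comparison principle for bounded, uniformly continuous viscosity solutions of this uniformly parabolic equation applies, with matching terminal data $w^\pm(T,x,\cdot)=(\sum_i p_i g_i(x))^*$, and yields $w^-\le w^+$.

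Combining $w^-\le w^+$ with the free inequality $w^+\le w^-$ gives $w^+=w^-$. Taking Fenchel biconjugates and using that $V^\pm(t,x,\cdot)$ is convex and continuous on $\Delta(I)$ (so that the biconjugate recovers the function) yields $V^+=V^-$, which is exactly the equality in the statement. I expect the concatenation and measurability bookkeeping in the dynamic programming step, and the careful transfer of the sub/supersolution properties through the Legendre transform, to be the heaviest parts of the argument. Finally, since the present game is a particular case of the model treated in \cite{CaRa2}, one may alternatively simply invoke that reference.
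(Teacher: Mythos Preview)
The paper does not prove this theorem itself: it is stated as a quotation of the main result of \cite{CaRa2}, with the single sentence ``Under assumption (H) the existence of the value of the game is proved in a more general setting in \cite{CaRa2}.'' Your proposal correctly sketches precisely the dual-viscosity-solution method (Legendre transform in $p$, sub/supersolution properties for $w^\pm$, standard parabolic comparison without obstacle, then biconjugation) that \cite{CaRa2} carries out, and your closing remark that one may simply invoke that reference is in fact exactly what the present paper does.
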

\begin{rem}
It is well known (e.g. \cite{CaRa2} Lemma 3.1) that it suffices for the uninformed player to use admissible (non-random) strategies if he plays first. Intuitively since he has no information to hide. So we can use in (6) the easier expression
\begin{equation}
		 			V(t,x,p) = \inf_{\bar \alpha \in (\mathcal{A}^r(t))^I}\sup_{\beta\in \mathcal{B}(t)} J(t,x,p,\bar \alpha,\beta).
\end{equation}

\end {rem}

The existence and uniqueness of the value function $V:[0,T]\times \mathbb{R}^d\times\Delta(I)\rightarrow\mathbb{R}$ is first given \cite{CaRa2} using the concept of dual viscosity solutions to HJI equations. Starting from this a characterization of the value function as solution of an obstacle problem is given in \cite{Ca}.

\begin{thm}
The function $V:[0,T[\times \mathbb{R}^d\times\Delta(I)\rightarrow\mathbb{R}$ is the unique viscosity solution to
\begin{equation}
	\min \left\{ \frac{\partial w} {\partial t}+\frac{1}{2}\textnormal{tr}(\sigma\sigma^*(t,x)D_x^2w)+H(t,x,D_xw,p),\lambda_{\min}\left(p,\frac{\partial ^2 w}{\partial p^2}\right)\right\}=0
\end{equation}
with terminal condition $w(T,x,p)=\sum_{i}p_ig_i(x)$, where for all $p\in\Delta(I)$, $A\in\mathcal{S}^I$
\begin{eqnarray*}
\lambda_{\min}(p,A):=\min_{z\in T_{\Delta(I)(p)}\setminus\{0\}} \frac{\langle Az,z\rangle}{|z|^2}.
\end{eqnarray*}
and  $T_{\Delta(I)(p)}$ denotes the tangent cone to $\Delta(I)$ at $p$, i.e. $T_{\Delta(I)(p)}=\overline{\cup_{\lambda>0}(\Delta(I)-p)/\lambda}$ .
\end{thm}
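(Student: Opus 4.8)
The statement is that of \cite{Ca}, and the natural route is to verify separately that the game value $V$ is a solution of the obstacle problem (8) and that solutions of (8) are unique.

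\emph{Existence.} The two entries of the minimum reflect two structural properties of $V$. First, $V(t,x,\cdot)$ is convex on $\Delta(I)$: since the informed player minimizes, he may \emph{split} the prior, so that for $p=\lambda p^{1}+(1-\lambda)p^{2}$ he randomizes the scenario he (partially) reveals and recovers $\lambda V(t,x,p^{1})+(1-\lambda)V(t,x,p^{2})$ as an upper bound for $V(t,x,p)$. A supporting-hyperplane argument turns this convexity into the viscosity inequality $\lambda_{\min}(p_{0},\partial^{2}_{p}\phi(p_{0}))\geq 0$ for every smooth $\phi$ touching $V$ from above at $p_{0}$: along any direction $z$ in the tangent cone the test function dominates the affine support of the convex map $V(t,x,\cdot)$, which forces a nonnegative second derivative in the direction $z$. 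Second, the dynamic programming principle of \cite{CaRa2}, written over a short horizon $[t,t+h]$ with a martingale of beliefs issued from $p$, yields after $h\to 0$ the matching inequalities for the parabolic HJI entry. Combining the two gives that $V$ is a subsolution of (8); for the supersolution inequality one tests from below and distinguishes the regime in which the optimal belief martingale stays frozen (non-revealing play, so the HJI entry vanishes) from the regime in which revelation is active (so $V$ is affine in $p$, the convexity constraint saturates and $\lambda_{\min}\leq 0$), so that the minimum is $\le 0$ in either case.

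\emph{Uniqueness.} I would not attempt a direct doubling-of-variables comparison on (8): the second-order obstacle in the variable $p$ is exactly what defeats the naive Ishii-lemma argument, and handling it is the main difficulty. Instead I would pass to the convex dual in $p$,
\begin{equation*}
w^{\#}(t,x,\hat p)=\max_{p\in\Delta(I)}\bigl\{\langle p,\hat p\rangle-w(t,x,p)\bigr\},\qquad \hat p\in\mathbb{R}^{I},
\end{equation*}
and prove the equivalence that $w$ is a viscosity solution of (8) if and only if $w^{\#}$ is a bounded Lipschitz viscosity solution of the associated dual HJI equation with the transformed Hamiltonian and terminal datum $\max_{i}(\hat p_{i}-g_{i}(x))$. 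The gain is that the convexity obstacle $\lambda_{\min}\ge 0$ is automatically encoded by the Legendre transform, being precisely the statement that $w^{\#\#}=w$ in $p$; hence the dual equation carries no obstacle and is a standard uniformly parabolic HJI equation in $(t,x)$ depending on the parameter $\hat p$, for which comparison is classical under Assumption (H). Transporting uniqueness back through the bijective dual transform yields uniqueness for (8). Finally, by the game's dynamic programming $V^{\#}$ coincides with the unique dual viscosity solution constructed in \cite{CaRa2}, which simultaneously closes the existence step and identifies $V$ as the unique solution of (8).
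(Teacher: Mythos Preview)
The paper does not give its own proof of this theorem: it is quoted from \cite{Ca} (which in turn builds on the dual--solution framework of \cite{CaRa2}), and the text explicitly states ``We do not go into detail about the rather technical proof \ldots\ in \cite{Ca}.'' Your outline---convexity of $V$ in $p$ via the splitting argument together with dynamic programming for the existence part, and passage to the Fenchel conjugate in $p$ to remove the second--order obstacle and reduce uniqueness to a standard parabolic HJI comparison---is exactly the route taken in \cite{Ca} and \cite{CaRa2}, so you have correctly reconstructed the strategy of the cited references. The one place where your sketch is noticeably looser than the actual argument is the supersolution half: the clean dichotomy ``either the belief martingale is frozen (HJI entry $\le 0$) or $V$ is affine in $p$ ($\lambda_{\min}\le 0$)'' is the right intuition, but turning it into a rigorous viscosity argument at a test point requires the careful interplay between the primal obstacle formulation and the dual HJI equation that is the technical core of \cite{Ca}.
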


\begin{rem}
Note that unlike the standard definition of viscosity solutions (see e.g. \cite{CIL}) the subsolution property to (8) is required only on the interior of $\Delta(I)$ while the supersolution property to (8) is required on the whole domain $\Delta(I)$ (see \cite{Ca} and \cite{CaRa1}). This is due to the fact that we actually consider viscosity solutions with a state constraint, namely $p\in\Delta(I)\subsetneq\mathbb{R}^I$. For a concise investigation of such problems we refer to \cite{CaDoL}.
\end{rem}

We do not go into detail about the rather technical proof of Theorem 2.7. in {\cite{Ca}}. However there is an easy intuitive explanation of the convexity constraint, which we give in the following remark.

\begin{rem}
Let $(t,x)\in [0,T]\times\mathbb{R}^d$ be fixed. For any $p_0\in\Delta(I)$ let $\lambda\in(0,1)$, $p_1,p_2\in\Delta(I)$, such that  $p_0=(1-\lambda)p_1+\lambda p_2$.\\
We consider the game in two steps. First the initial distribution for the game with incomplete information $p_1,p_2$ is picked with probability $(1-\lambda),\lambda$. If the outcome is transmitted only to Player 1, the value of this game is $V(t,x,(1-\lambda)p_1+\lambda p_2)=V(t,x,p_0)$.\\
On the other hand we consider the game in which both players are told the outcome of the pick of the initial distribution $p_1,p_2$. The expected outcome of this game is $(1-\lambda)V(t,x,p_1)+ \lambda V(t,x,p_2)$.\\
In the first game the informed player knows more, hence, if we make the rather reasonable assumption that the value of information is positive, we have  $V(t,x,p_0)\leq(1-\lambda)V(t,x,p_1)+ \lambda V(t,x,p_2)$.
\end{rem}

\section{Alternative representation of the value function}

\subsection{Enlargement of the canonical space}

In the following we establish a representation of the value function by enlarging the canonical Wiener space to a space which will carry besides a Brownain motion a new dynamic. We use this additional dynamic to model the incorporation of the private information into the game. More precisely we model the probability in which scenario the game is played in according to the information of the uniformed Player 2.\\

To that end let us denote by $\mathcal{D}([0,T];\Delta(I))$ the set of c\`adl\`ag functions from $\mathbb{R}$ to $\Delta(I)$, which are constant on $(-\infty,0)$ and on $[T , +\infty)$. We denote by $\bold p_s(\omega_p)=\omega_{{p}}(s)$ the coordinate mapping on $\mathcal{D}([0,T];\Delta(I))$ and by $\mathcal{G}=(\mathcal{G}_s)$ the filtration generated by $s \mapsto \bold p_s$.
Furthermore we recall that $\mathcal{C}([0,T];\mathbb{R}^d)$ denotes the set of continuous functions from $\mathbb{R}$ to $\mathbb{R}^d$, which are constant on $(-\infty,0]$ and on $[T,+\infty)$. We denote by $B_s(\omega_B)=\omega_B(s)$ the coordinate mapping on $\mathcal{C}([0,T];\mathbb{R}^d)$ and by $\mathcal{H}=(\mathcal{H}_s)$ the filtration generated by $s \mapsto B_s$.
We equip the product space $\Omega:=\mathcal{D}([0,T];\Delta(I))\times \mathcal{C}([0,T];\mathbb{R}^d)$ with the filtration $\mathcal{F}=\mathcal{G}\otimes\mathcal{H}$.\\

For $0\leq t\leq T$ we denote $\Omega_t=\{\omega\in\mathcal{D}([t,T];\Delta(I))\times\mathcal{C}([t,T];\mathbb{R}^d)\}$ and $\mathcal{F}_{t,s}$ the $\sigma$-algebra generated by paths up to time $s$ in $\Omega_t$. Furthermore we define the space 
\[\Omega_{t,s}=\{\omega\in\mathcal{D}([t,s];\Delta(I))\times\mathcal{C}([t,s];\mathbb{R}^d)\}\]
for $0\leq t\leq s \leq T$. If $r\in]t,T[$ and $\omega\in\Omega_t$ then let
\begin{eqnarray*}
\omega_1=1_{[-\infty,r[}\omega\ \ \ \ \ \ \ \ \ \ 
\omega_2=1_{[r,+\infty]}(\omega-\omega_{r-})
\end{eqnarray*}
and denote $\pi\omega=(\omega_1,\omega_2)$. The map $\pi:\Omega_t\rightarrow\Omega_{t,r}\times\Omega_{r}$ induces the identification
	$\Omega_t=\Omega_{t,r}\times\Omega_{r}$
moreover $\omega=\pi^{-1}(\omega_1,\omega_2)$, where the inverse is defined in an evident way.\\

For any measure $\mathbb{P}$ on $\Omega$, we denote by $\mathbb{E}_\mathbb{P}[\cdot]$ the expectation with respect to $\mathbb{P}$. We equip $\Omega$ with a certain class of measures.

\begin{defi}
Given $p\in\Delta(I )$, $t\in[0,T]$, we denote by $\mathcal{P}(t,p)$ the set of probability measures $\mathbb{P}$ on $\Omega$
such that, under $\mathbb{P}$
\begin{itemize}
\item[(i)]	$\bold p$ is a martingale, such that
	$\bold p_s = p$  $\forall s< t$,
	$\bold p_s \in\{e_i , i = 1, \ldots , I \}$  $\forall s \geq T$   $\mathbb{P}$-a.s. and
 $\bold p_T$ is independent of $(B_s)_{s\in(-\infty,T]},$
 \item[(ii)]	$(B_s)_{s \in [0,T]}$ is a Brownian motion, 
	\item[(iii)]  under  $\mathbb{P}$  the processes $B_s$ and $\bold p_s$ are strongly orthogonal, i.e. $\langle B,{\bold{p}}^c \rangle_s=0$ for all $ s \in [0,T]$,  where $\bold{p}^c$ denotes the continuous part of $\bold p$. 
\end{itemize}
\end{defi}

\begin{com}
Assumption (ii) is naturally given by the Brownian structure of the game, while (iii) is merely imposed for technical reasons. Assumption (i) is motivated as follows. Before the game starts the information of the uninformed player is just the initial distribution $p$. The martingale property, implying $\bold p_t=\mathbb{E}_{\mathbb{P}}[\bold p_T|\mathcal{F}_t]$, is due to the best guess of the uniformed player about the scenario he is in. Finally, at the end of the game the information is revealed hence $\bold p_T \in\{e_i , i = 1, \ldots , I \}$ and  since the scenario is picked before the game starts the outcome $\bold p_T $ is independent of the Brownian motion. 
\end{com}



\subsection{BSDEs for stochastic differential games with incomplete information}

An alternative representation of the value of the game is given in \cite{CaRa1} in a simpler setting by directly minimizing the expectation of the Hamiltonian over a similar class of martingale measures $\mathbb{P}$. In our case the drift of the diffusion is controlled by the players, hence the Hamiltonian (2) depends on the first derivative of the value function and a ``direct" representation is not possible.\\

Inspired by the ideas of \cite{HaLe} we use the theory of BSDE to solve this problem. To that end we introduce the following spaces.
For any $p\in\Delta(I )$, $t\in[0,T]$ and fixed $\mathbb{P}\in \mathcal{P}(t,p)$ we denote by $\mathcal{L}^2_T(\mathbb{P})$ the set of a square integrable $\mathcal{F}_T$-measurable random variables.
We define by $\mathcal{H}^{2}(\mathbb{P})$ the space of all predictable processes $\theta$ such that $\int_0^\cdot \theta_s dB_s$ is a square integrable martingale, i.e. $\mathbb{E}\left[\int_0^T \theta_s^2 ds\right]<\infty$, and $\mathcal{I}^{2}(\mathbb{P})=\left\{\int\theta dB:\theta\in\mathcal{H}^{2}(\mathbb{P})\right\}$.
Furthermore we denote by $\mathcal{M}^2_{0}(\mathbb{P})$ the space of square integrable martingales null at zero. In the following we shall identify any $N\in\mathcal{M}^2_{0}(\mathbb{P})$ with its c\`adl\`ag modification.\\

For all $t\in[0,T]$, $x\in\mathbb{R}^d$ we define the process $X^{t,x}$ by
\begin{eqnarray}
X^{t,x}_s=x \ \ \ \ s<t, \ \ \ \ \ \   X^{t,x}_s=x+\int_t^s \sigma(r,X_r^{t,x})dB_r\ \ \ \  s\geq t.
\end{eqnarray}
Let $p\in\Delta(I)$. We consider for each $\mathbb{P}\in \mathcal{P}(t,p)$ the BSDE
\begin{eqnarray}
Y_s^{t,x,\mathbb{P}}=\langle\bold p_T, g(X_T^{t,x})\rangle+\int_s^T H(r,X_r^{t,x},Z^{t,x,\mathbb{P}}_r,\bold{p}_r)dr-\int_s^T\sigma^*(r,X_r^{t,x})Z^{t,x,\mathbb{P}}_rdB_r-N_T+N_s,
\end{eqnarray}
where $N\in\mathcal{M}^2_0(\mathbb{P})$ is strongly orthogonal to $\mathcal{I}^{2}(\mathbb{P})$.\\

Existence and uniqueness results for the BSDE (10) can be found in more generality in \cite{ElKH}. Our case is much simpler, since the driver does not depend on the jump parts. This significantly simplifies the proofs which we give for the reader's convenience in the appendix. Note in particular that as in the standard case we can establish a comparison principle (Theorem A.4.), which will be crucial in our further calculations.\\ 

\begin{thm}
Under the assumption (H) the BSDE (10) has a unique solution $(Y^{t,x},Z^{t,x},N)\in\mathcal{H}^2(\mathbb{P})\times\mathcal{H}^2(\mathbb{P})\times\mathcal{M}^2_0(\mathbb{P})$ and it holds for any $s\leq T$
\begin{equation*}
Y_s^{t,x,\mathbb{P}}=\mathbb{E}_\mathbb{P}\left[\int_{s}^T H(r,X_r^{t,x},Z^{t,x,\mathbb{P}}_r,\bold p_r)dr+\langle\bold p_T, g(X_T^{t,x})\rangle\big|\mathcal{F}_s\right].
\end{equation*}
\end{thm}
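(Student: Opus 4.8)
The plan is to construct the solution $(Y^{t,x},Z^{t,x},N)$ by a Picard iteration whose fixed point solves (10), to prove contraction by means of the Galtchouk--Kunita--Watanabe (GKW) decomposition associated with the Brownian motion $B$ under $\mathbb{P}$, and finally to read off the representation formula directly from the equation. Since the driver $H(r,X_r^{t,x},\cdot,\mathbf{p}_r)$ in (10) does not depend on $Y$, the only genuine unknowns are the integrand $Z$ and the orthogonal martingale $N$, while $Y$ is recovered as a conditional expectation.

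Set $\xi=\langle \mathbf{p}_T,g(X_T^{t,x})\rangle$. By assumption (H)(iii) $g$ is bounded and $\mathbf{p}_T\in\{e_i\}$, so $\xi$ is bounded, hence $\xi\in\mathcal{L}^2_T(\mathbb{P})$; by (3), $|H(r,X_r^{t,x},z,\mathbf{p}_r)|\le c(1+|z|)$, so $\int_0^T H(r,X_r^{t,x},Z_r,\mathbf{p}_r)\,dr$ is square integrable whenever $Z\in\mathcal{H}^2(\mathbb{P})$. Starting from $Z^0=0$, and given $Z^n\in\mathcal{H}^2(\mathbb{P})$, consider the square-integrable $\mathbb{P}$-martingale
$$
M_s^{n+1}=\mathbb{E}_{\mathbb{P}}\Big[\xi+\int_0^T H(r,X_r^{t,x},Z_r^n,\mathbf{p}_r)\,dr\,\Big|\,\mathcal{F}_s\Big].
$$
The GKW decomposition of $M^{n+1}$ with respect to $B$ produces a unique pair consisting of an element of $\mathcal{I}^2(\mathbb{P})$ and a martingale $N^{n+1}\in\mathcal{M}^2_0(\mathbb{P})$ strongly orthogonal to $\mathcal{I}^2(\mathbb{P})$, namely $M_s^{n+1}=M_0^{n+1}+\int_0^s\phi_r^{n+1}\,dB_r+N_s^{n+1}$. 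Using that $\sigma^*(r,X_r^{t,x})$ is invertible with bounded inverse (assumption (H)(ii)), I set $Z^{n+1}=(\sigma^*)^{-1}\phi^{n+1}\in\mathcal{H}^2(\mathbb{P})$ and $Y_s^{n+1}=M_s^{n+1}-\int_0^s H(r,X_r^{t,x},Z_r^n,\mathbf{p}_r)\,dr$; a direct computation then shows that $(Y^{n+1},Z^{n+1},N^{n+1})$ satisfies (10) with $Z^n$ frozen in the driver.

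For the contraction, write $\Delta Z=Z^{n+1}-\tilde Z^{n+1}$ and $\Delta N=N^{n+1}-\tilde N^{n+1}$ for two inputs $Z^n,\tilde Z^n$. Since the terminal data coincide, $\Delta M_T=\int_0^T[H(r,X_r^{t,x},Z_r^n,\mathbf{p}_r)-H(r,X_r^{t,x},\tilde Z_r^n,\mathbf{p}_r)]\,dr$, and the strong orthogonality of $\int\sigma^*\Delta Z\,dB$ and $\Delta N$ gives, by It\^o isometry,
$$
\mathbb{E}_{\mathbb{P}}\Big[\int_0^T|\sigma^*\Delta Z_r|^2\,dr\Big]+\mathbb{E}_{\mathbb{P}}[\langle \Delta N\rangle_T]=\mathbb{E}_{\mathbb{P}}[|\Delta M_T-\Delta M_0|^2]\le \mathbb{E}_{\mathbb{P}}[|\Delta M_T|^2].
$$
The Lipschitz bound $|H(r,x,z,p)-H(r,x,z',p)|\le c|z-z'|$ from (4), the Cauchy--Schwarz inequality in time, and the bound on $(\sigma^*)^{-1}$ then control $\mathbb{E}_{\mathbb{P}}[\int_0^T|\Delta Z_r|^2dr]$ by $C\,T\,\mathbb{E}_{\mathbb{P}}[\int_0^T|Z^n-\tilde Z^n|^2dr]$, a contraction on a short horizon; either a weighted norm $\mathbb{E}_{\mathbb{P}}[\int_0^T e^{\beta r}|Z_r|^2dr]$ with $\beta$ large, or a subdivision of $[0,T]$ with pasting, upgrades this to arbitrary $T$. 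The limit $(Y,Z,N)$ solves (10), with $Y$ lying in $\mathcal{H}^2(\mathbb{P})$ as a bounded-data conditional expectation, and uniqueness follows from the same estimate applied to the difference of two solutions.

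It remains to derive the representation formula. For a solution $(Y,Z,N)$ of (10), set $U_s=Y_s+\int_0^s H(r,X_r^{t,x},Z_r,\mathbf{p}_r)\,dr$; substituting (10) gives $U_s=U_T-\int_s^T\sigma^*(r,X_r^{t,x})Z_r\,dB_r-(N_T-N_s)$ with $U_T=\langle \mathbf{p}_T,g(X_T^{t,x})\rangle+\int_0^T H\,dr$. Because $\sigma^*$ is bounded and $Z\in\mathcal{H}^2(\mathbb{P})$, the process $\int_0^\cdot\sigma^* Z\,dB$ is a martingale, as is $N$, so taking $\mathbb{E}_{\mathbb{P}}[\,\cdot\mid\mathcal{F}_s]$ kills both increments and yields $U_s=\mathbb{E}_{\mathbb{P}}[U_T\mid\mathcal{F}_s]$; subtracting the $\mathcal{F}_s$-measurable term $\int_0^s H\,dr$ produces exactly the claimed formula for $Y_s^{t,x,\mathbb{P}}$. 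The main obstacle is the first part: unlike the classical Brownian case there is no full martingale representation in the enlarged filtration $\mathcal{F}=\mathcal{G}\otimes\mathcal{H}$, so the argument must rely on the GKW projection onto $\mathcal{I}^2(\mathbb{P})$ and carefully retain the orthogonal remainder $N$, whose square-integrability and strong orthogonality to $\mathcal{I}^2(\mathbb{P})$ must be propagated through the iteration and preserved in the limit.
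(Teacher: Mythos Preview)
Your proposal is correct and follows essentially the same route as the paper's Appendix~A: Picard iteration built from the Galtchouk--Kunita--Watanabe decomposition, a Lipschitz-based contraction in a weighted $\mathcal{H}^2$-norm (the paper obtains it via It\^o's formula applied to $e^{\beta s}(\delta Y^n_s)^2$, you via It\^o isometry on the orthogonal GKW components---these are equivalent standard devices), and the representation formula read off by taking conditional expectations in the equation. The only cosmetic difference is that the paper works with the substituted variable $z=\sigma^*Z$ throughout, whereas you carry $\sigma^*$ explicitly and invert it afterwards using (H)(ii).
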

In particular it holds
\begin{equation}
Y_{t-}^{t,x,\mathbb{P}}=\mathbb{E}_\mathbb{P}\left[\int_{t}^T H(r,X_r^{t,x},Z^{t,x,\mathbb{P}}_r,\bold p_r)dr+\langle\bold p_T, g(X_T^{t,x})\rangle\big|\mathcal{F}_{t-}\right].
\end{equation}

Fix $t\in[0,T]$, $x\in\mathbb{R}^d$, $p\in\Delta(I)$.  Note that all $\mathbb{P}\in \mathcal{P}(t,p)$ are equal on $\mathcal{F}_{t-}$, i.e. the distribution of $(B_s,\bold p_s)$ $s\in[0,t[$ is given by $\delta(p)\otimes\mathbb{P}^0$, where $\delta(p)$ is the measure under which $\bold p$ is constant and equal to $p$ and $\mathbb{P}^0$ is a Wiener measure. So we can identify each $\mathbb{P}\in \mathcal{P}(t,p)$ on $\mathcal{F}_{t-}$ with a common probability measure $\mathbb{Q}$ and define 
\begin{eqnarray}
W(t,x,p)=\textnormal{essinf}_{\mathbb{P} \in \mathcal{P}(t,p)} Y^{t,x,\mathbb{P}}_{t-}\ \ \ \ \ \mathbb{Q}\textnormal{-a.s.}
\end{eqnarray}


The aim of this paper is to show the following alternative representation for the value function.

\begin{thm}
For any $(t,x,p)\in[0,T[\times\mathbb{R}^d\times\Delta(I)$ the value of the game with incomplete information $V(t,x,p)$ can be characterized as
\begin{eqnarray}
V(t,x,p)=\textnormal{essinf}_{\mathbb{P} \in \mathcal{P}(t,p)}  Y^{t,x,\mathbb{P}}_{t-}.
 \end{eqnarray}
\end{thm}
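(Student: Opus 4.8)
The plan is to prove (14) by reducing it to the uniqueness statement of Theorem 2.7. Writing $W(t,x,p):=\textnormal{essinf}_{\mathbb{P}\in\mathcal{P}(t,p)}Y^{t,x,\mathbb{P}}_{t-}$ as in (13), and identifying $Y^{t,x,\mathbb{P}}_{t-}$ with the $\mathbb{Q}$-a.s.\ constant it represents, it suffices to show that $W$ is a bounded continuous function on $[0,T]\times\mathbb{R}^d\times\Delta(I)$ which is a viscosity solution of the obstacle problem (8) with terminal data $w(T,x,p)=\sum_i p_ig_i(x)$; uniqueness then forces $W=V$. The elementary properties come first. Boundedness of $W$ follows from the a priori bound $|H|\le c(1+|\xi|)$, the boundedness of $g$, and the standard $\mathcal{H}^2$-estimates for the BSDE (10) recalled in the appendix; the terminal condition is read off (10)–(11) directly. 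Lipschitz continuity in $x$ and H\"older continuity in $t$ follow from the comparison principle (Theorem A.4) together with the flow estimates for $X^{t,x}$ and the bounds (4) on $H$, while continuity in $p$ will follow a posteriori from the convexity established next.

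The pivotal structural fact is that $W(t,x,\cdot)$ is convex on $\Delta(I)$, which is precisely the obstacle constraint $\lambda_{\min}\big(p,\tfrac{\partial^2 W}{\partial p^2}\big)\ge0$. Given a splitting $p=(1-\lambda)p_1+\lambda p_2$ and $\varepsilon$-optimal measures $\mathbb{P}_1\in\mathcal{P}(t,p_1)$, $\mathbb{P}_2\in\mathcal{P}(t,p_2)$, I would construct $\mathbb{P}\in\mathcal{P}(t,p)$ under which $\mathbf{p}$ starts at $p$, jumps at time $t$ to $p_1$ with probability $1-\lambda$ and to $p_2$ with probability $\lambda$, and thereafter evolves as under $\mathbb{P}_1$, resp.\ $\mathbb{P}_2$. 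The mean-preserving jump keeps $\mathbf{p}$ a martingale, while the Brownian motion and the strong orthogonality of Definition 2.9 are inherited, so indeed $\mathbb{P}\in\mathcal{P}(t,p)$. Linearity of the representation in Theorem 3.2 / formula (11) across the two branches then gives $Y^{t,x,\mathbb{P}}_{t-}=(1-\lambda)Y^{t,x,\mathbb{P}_1}_{t-}+\lambda Y^{t,x,\mathbb{P}_2}_{t-}$, whence $W(t,x,p)\le(1-\lambda)W(t,x,p_1)+\lambda W(t,x,p_2)$. This is the probabilistic incarnation of Remark 2.10 and supplies both the obstacle part of the supersolution property and the missing continuity in $p$.

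The heart of the argument, and the step I expect to be hardest, is a dynamic programming principle
\[
W(t,x,p)=\textnormal{essinf}_{\mathbb{P}\in\mathcal{P}(t,p)}\ \mathbb{E}_{\mathbb{P}}\!\left[\int_t^{t+h}H(r,X_r^{t,x},Z_r^{t,x,\mathbb{P}},\mathbf{p}_r)\,dr+W\big(t+h,X_{t+h}^{t,x},\mathbf{p}_{t+h}\big)\right].
\]
The inequality ``$\ge$'' follows from the flow property of the BSDE (Theorem 3.2) and the bound $Y^{t,x,\mathbb{P}}_{t+h}\ge W(t+h,X_{t+h}^{t,x},\mathbf{p}_{t+h})$, itself a consequence of the definition of $W$ as an essential infimum together with comparison. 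The reverse inequality requires a pasting construction: starting from an arbitrary $\mathbb{P}$ on $[t,t+h)$ one glues, via a measurable selection of $\varepsilon$-optimal continuation measures indexed by $(t+h,X_{t+h}^{t,x},\mathbf{p}_{t+h})$ and drawn from $\mathcal{P}(t+h,\cdot)$, a family of continuations. One must then verify that the spliced object again lies in $\mathcal{P}(t,p)$ — preservation of the martingale property of $\mathbf{p}$ across the splice, of the Brownian motion, and of the strong orthogonality — and that its BSDE value reproduces the right-hand side. This selection-and-gluing step, and in particular checking membership in $\mathcal{P}(t,p)$, is the principal technical obstacle.

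Finally I would convert the DPP into the viscosity inequalities. For the supersolution property, required on all of $\Delta(I)$ by Remark 2.8, a test function $\phi$ touching $W$ from below, applied to the constant-belief competitor $\mathbf{p}_r\equiv p$ on $[t,t+h)$ (admissible, since $\mathbf{p}$ may be frozen before revealing at $T$), expanded by It\^o's formula and with $h\downarrow0$, yields $\partial_t\phi+\tfrac12\mathrm{tr}(\sigma\sigma^*D_x^2\phi)+H(t,x,D_x\phi,p)\ge0$; combined with the convexity bound of the previous step this gives the full supersolution property. For the subsolution property, required only on the interior of $\Delta(I)$, at a test function touching from above one exploits the freedom to move $\mathbf{p}$ along the tangent cone: if the obstacle term is strictly positive one again freezes $\mathbf{p}$ and extracts $\partial_t\phi+\tfrac12\mathrm{tr}(\sigma\sigma^*D_x^2\phi)+H\le0$, so that the minimum in (8) is nonpositive. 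These are exactly the requirements of Theorem 2.7, and uniqueness then yields $W=V$, which is (14).
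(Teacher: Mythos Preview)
Your overall architecture --- prove $W$ is deterministic, establish regularity and convexity in $p$, a dynamic programming principle, and then the viscosity sub-/supersolution properties, concluding by uniqueness --- is exactly the route the paper takes. The convexity argument and the pasting construction for the DPP are also in the right spirit. However, the final step, where you extract the two viscosity inequalities from the DPP, contains a genuine error: you have the roles of the two inequalities reversed, and, more seriously, the argument you sketch cannot produce the hard one.

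Freezing $\mathbf{p}\equiv\bar p$ picks \emph{one} competitor in the essential infimum, hence gives an \emph{upper} bound $W(\bar t,\bar x,\bar p)\le \mathbb{E}\big[\int_{\bar t}^{t}H\,dr+W(t,X_t,\bar p)\big]$. Combined with a test function touching from \emph{above} ($W\le\phi$) and It\^o's formula, this yields $\partial_t\phi+\tfrac12\mathrm{tr}(\sigma\sigma^*D_x^2\phi)+H\ge0$; together with convexity of $W$ (which transfers to $\lambda_{\min}(\partial^2\phi/\partial p^2)\ge0$ only when $\phi$ touches from above), this is precisely the \emph{subsolution} inequality $\min\{\cdot,\cdot\}\ge0$. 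It does not, and cannot, give the supersolution inequality: with $\phi$ touching from below ($W\ge\phi$) the substitution goes the wrong way, and convexity of $W$ says nothing about $\partial^2\phi/\partial p^2$ for such $\phi$.

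For the supersolution you must instead work with an $\varepsilon$-optimal $\mathbb{P}^\varepsilon$ and show that, under the assumption $\lambda_{\min}(\partial^2\phi/\partial p^2)>0$, the martingale $\mathbf{p}$ is \emph{forced} to stay close to $\bar p$: the strict convexity of $\phi$ in $p$ penalises excursions of $\mathbf{p}$ through an estimate of the type $\mathbb{E}_{\mathbb{P}^\varepsilon}[|\mathbf{p}_{t-}-\bar p|^2|\mathcal{F}_{\bar t-}]\le c(t-\bar t)^{1/2}$, obtained by plugging the refined lower bound $W(t,X_t,\mathbf{p}_{t-})\ge\phi(t,X_t,\bar p)+\langle\partial_p\phi,\mathbf{p}_{t-}-\bar p\rangle+\delta|\mathbf{p}_{t-}-\bar p|^2$ into the DPP. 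Only after this control on $\mathbf{p}$ can one pass to an auxiliary BSDE on $[\bar t,t]$ and extract $\partial_t\phi+\tfrac12\mathrm{tr}(\sigma\sigma^*D_x^2\phi)+H\le0$. This is the substantive analytic step of the whole proof, and your proposal (``one again freezes $\mathbf{p}$'') bypasses it entirely. A secondary gap: you treat $Y^{t,x,\mathbb{P}}_{t-}$ as already deterministic, but $\mathcal{F}_{t-}$ carries the Brownian path on $[0,t)$, and showing that the \emph{essential infimum} $W$ is $\mathbb{Q}$-a.s.\ constant requires a Cameron--Martin invariance argument that you have not supplied.
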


We give the proof in the section 4, where we first show that $W(t,x,p)$ is a deterministic function. Then we establish a Dynamic Programming Principle and show that $W(t,x,p)$ is a viscosity solution to (8). Since $V(t,x,p)$ is by Theorem 2.8. uniquely defined as the viscosity solution to (8) the equality is immediate. Before, let us first investigate under smoothness assumptions a possible behavior of an optimal measure and show how the representation is related to the original game.

\subsection{A sufficient condition for optimality}

Next we give a sufficient condition for a $\mathbb{P}\in\mathcal{P}(t,p)$ to be optimal in (13). We assume $V\in\mathcal{C}^{1,2,2}([t,T)\times\mathbb{R}^d\times\Delta(I);\mathbb{R})$ and set
\begin{eqnarray*}
	\mathcal{H}&=&\bigg\{(t,x,p)\in[0,T)\times\mathbb{R}^d\times\Delta(I): \frac{\partial V} {\partial t}+\frac{1}{2}\textnormal{tr}(\sigma\sigma^*(t,x)D^2_xV)+H(t,x,D_xV,p)=0\bigg\}
\end{eqnarray*}
and 
\begin{eqnarray*}
	\mathcal{H}(t,x)=\left\{p\in\Delta(I):\ (t,x,p)\in\mathcal{H}\right\}.
\end{eqnarray*}
In the theory of games with incomplete information the set $\mathcal{H}$ is usually called the non-revealing set. This is due to the fact that on $\mathcal{H}$ the value function fullfills the standard HJI equation, hence the informed player is not ``actively" using his information because the belief of the uniformed player stays unchanged.

\begin{thm}
Let $(t,x,p)\in[0,T)\times\mathbb{R}^d\times\Delta(I)$. We assume $V\in\mathcal{C}^{1,2,2}([t,T)\times\mathbb{R}^d\times\Delta(I);\mathbb{R})$. Let $\bar{\mathbb{P}}\in\mathcal{P}(t,p)$, such that
\begin{itemize}
\item [(i)] $\bold p_s\in\mathcal{H}(s,X^{t,x}_s)$ $\forall s\in[t,T]$  $\bar{\mathbb{P}}$-a.s.,
\item[(ii)] $\bar{\mathbb{P}}$-a.s. it holds $\forall s\in[t,T]$
		\begin{eqnarray*}
			V(s,X^{t,x}_s,\bold p_{s})-V(s,X^{t,x}_s,\bold p_{s-})-\langle \frac{\partial}{\partial p}V(s,X^{t,x}_s,\bold p_{s-}),\bold p_{s}-\bold p_{s-}\rangle=0,
		\end{eqnarray*}
\item[(iii)]  $\bold p$ is under $\bar{\mathbb{P}}\in\mathcal{P}(t,p)$ a purely discontinuous martingale.
\end{itemize}
Then $\bar{\mathbb{P}}$ is optimal for $V(t,x,p)$.
\end{thm}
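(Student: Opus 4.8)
The plan is a verification argument. I will apply the change-of-variables formula for c\`adl\`ag semimartingales to $V(s,X^{t,x}_s,\mathbf{p}_s)$ under an arbitrary $\mathbb{P}\in\mathcal{P}(t,p)$ and read off the sign of its finite-variation part from the obstacle equation (8), which $V$ solves classically thanks to the $\mathcal{C}^{1,2,2}$ assumption and Theorem 2.8. Since $X^{t,x}$ is continuous with $d\langle X^{t,x}\rangle_s=\sigma\sigma^*(s,X^{t,x}_s)\,ds$ and, by the strong orthogonality in Definition 3.1, $\langle B,\mathbf{p}^c\rangle\equiv0$ (so no mixed $\partial_x\partial_p V$ term appears), It\^o's formula gives, with all derivatives of $V$ evaluated at $(r,X^{t,x}_r,\mathbf{p}_{r-})$,
\begin{align*}
V(s,X^{t,x}_s,\mathbf{p}_s)&=V(t,x,p)+\int_t^s\Big(\partial_tV+\tfrac12\mathrm{tr}(\sigma\sigma^*D_x^2V)\Big)dr+\tfrac12\int_t^s\mathrm{tr}\big(\partial^2_{pp}V\,d\langle\mathbf{p}^c\rangle_r\big)\\
&\quad+\int_t^s(D_xV)^*\sigma\,dB_r+N_s+\sum_{t\le r\le s}\Big(\Delta_rV-\langle\partial_pV,\Delta\mathbf{p}_r\rangle\Big),
\end{align*}
where $\Delta_rV:=V(r,X^{t,x}_r,\mathbf{p}_r)-V(r,X^{t,x}_r,\mathbf{p}_{r-})$, $N_s:=\int_{[t,s]}\langle\partial_pV,d\mathbf{p}_r\rangle$, and $\mathbf{p}_{t-}=p$.

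Since $V\in\mathcal{C}^{1,2,2}$, on the interior of $\Delta(I)$ it solves (8) classically, so there both $\partial_tV+\tfrac12\mathrm{tr}(\sigma\sigma^*D_x^2V)+H(r,X^{t,x}_r,D_xV,\mathbf{p}_r)\ge0$ and $\lambda_{\min}(p,\partial^2_{pp}V)\ge0$; by continuity of the derivatives of $V$ both inequalities extend to all of $\Delta(I)$ (recall from Remark 2.9 that only the subsolution property is restricted to the interior). The first inequality gives $\partial_tV+\tfrac12\mathrm{tr}(\sigma\sigma^*D_x^2V)\ge-H$; the second says $p\mapsto V(t,x,p)$ is convex along $\Delta(I)$, and since the increments of the $\Delta(I)$-valued martingale $\mathbf{p}$ are tangent to $\Delta(I)$, this forces both $\mathrm{tr}(\partial^2_{pp}V\,d\langle\mathbf{p}^c\rangle_r)\ge0$ and, by convexity on the segment $[\mathbf{p}_{r-},\mathbf{p}_r]\subset\Delta(I)$, every summand $\Delta_rV-\langle\partial_pV,\Delta\mathbf{p}_r\rangle\ge0$. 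Writing $Z_r:=D_xV(r,X^{t,x}_r,\mathbf{p}_r)$ and collecting terms, $s\mapsto V(s,X^{t,x}_s,\mathbf{p}_s)+\int_t^sH(r,X^{t,x}_r,Z_r,\mathbf{p}_r)dr$ equals $V(t,x,p)$ plus a nondecreasing process $A$ with $A_t=0$ plus the local martingale $\int_t^\cdot(\sigma^*Z)^*dB+N$. Evaluating at $s=T$, using the terminal condition $V(T,\cdot,\mathbf{p}_T)=\langle\mathbf{p}_T,g(X^{t,x}_T)\rangle$ and taking $\mathbb{E}_{\mathbb{P}}[\,\cdot\mid\mathcal{F}_{t-}]$ (the martingale terms having zero $\mathcal{F}_{t-}$-conditional mean), formula (11) yields $Y^{t,x,\mathbb{P}}_{t-}=V(t,x,p)+\mathbb{E}_{\mathbb{P}}[A_T\mid\mathcal{F}_{t-}]\ge V(t,x,p)$ for every $\mathbb{P}\in\mathcal{P}(t,p)$.

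It remains to see that hypotheses (i)--(iii) make $A\equiv0$ for $\bar{\mathbb{P}}$. Indeed (iii) ($\mathbf{p}$ purely discontinuous) gives $\langle\mathbf{p}^c\rangle\equiv0$ and removes the second-order $p$-term; (ii) annihilates every jump summand $\Delta_rV-\langle\partial_pV,\Delta\mathbf{p}_r\rangle$; and (i), namely $\mathbf{p}_s\in\mathcal{H}(s,X^{t,x}_s)$, forces $\partial_tV+\tfrac12\mathrm{tr}(\sigma\sigma^*D_x^2V)=-H$ along the trajectory. Consequently, under $\bar{\mathbb{P}}$ the triple $\big(V(s,X^{t,x}_s,\mathbf{p}_s),Z_s,N_s\big)$ solves the BSDE (10); moreover $N=\int\langle\partial_pV,d\mathbf{p}\rangle$ is purely discontinuous, hence strongly orthogonal to $\mathcal{I}^2(\bar{\mathbb{P}})$. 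By the uniqueness part of Theorem 3.3 this triple coincides with $(Y^{t,x,\bar{\mathbb{P}}},Z^{t,x,\bar{\mathbb{P}}},N)$, so $Y^{t,x,\bar{\mathbb{P}}}_{t-}=V(t,x,\mathbf{p}_{t-})=V(t,x,p)$. Combined with the lower bound of the previous paragraph (which itself re-proves the ``$\ge$'' half of representation (13)), this shows $\bar{\mathbb{P}}$ attains the essential infimum in (13), i.e.\ $\bar{\mathbb{P}}$ is optimal for $V(t,x,p)$.

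The genuine difficulty is technical rather than structural: justifying the generalized It\^o formula under the mere $\mathcal{C}^{1,2,2}$ regularity and, above all, securing the integrability that promotes the stochastic integrals to true $L^2$-martingales---so that $N\in\mathcal{M}^2_0(\bar{\mathbb{P}})$ and the conditional expectations of the martingale parts vanish. This needs control of $D_xV$ and $\partial_pV$ along $X^{t,x}$ together with the quadratic variation of $\mathbf{p}$, and some care near the terminal time since the smoothness is assumed only on $[t,T)$. The one geometric point to make precise is that the increments of a $\Delta(I)$-valued martingale lie in the tangent directions to $\Delta(I)$, which is exactly what lets the constraint $\lambda_{\min}(p,\partial^2_{pp}V)\ge0$---a statement about tangent vectors only---control simultaneously the continuous $d\langle\mathbf{p}^c\rangle$-term and the discrete convexity defect at the jumps.
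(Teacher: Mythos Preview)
Your verification under $\bar{\mathbb{P}}$---apply It\^o, use (i)--(iii) to annihilate the drift defect, the $d\langle\mathbf{p}^c\rangle$-term and the jump sum, and then recognize $\big(V(s,X^{t,x}_s,\mathbf{p}_s),\,D_xV,\,N\big)$ as the unique solution of (10)---is exactly the paper's argument. (You even identify the orthogonal martingale as $N=\int\langle\partial_pV,d\mathbf{p}\rangle$, which is the correct reading; the paper writes $N=0$ a bit loosely.) From this one gets $Y^{t,x,\bar{\mathbb{P}}}_{t-}=V(t,x,p)$, and the paper then simply invokes the representation Theorem~3.4 to conclude that $\bar{\mathbb{P}}$ attains the essential infimum in (13).

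Where you depart from the paper is in trying to prove the inequality $Y^{t,x,\mathbb{P}}_{t-}\ge V(t,x,p)$ for \emph{every} $\mathbb{P}\in\mathcal{P}(t,p)$ directly, so as not to rely on Theorem~3.4. That step, as written, has a genuine gap. Your It\^o computation yields, after conditioning,
\[
\mathbb{E}_{\mathbb{P}}\Big[\langle\mathbf{p}_T,g(X^{t,x}_T)\rangle+\int_t^T H\big(r,X^{t,x}_r,\,D_xV(r,X^{t,x}_r,\mathbf{p}_r),\,\mathbf{p}_r\big)\,dr\ \Big|\ \mathcal{F}_{t-}\Big]=V(t,x,p)+\mathbb{E}_{\mathbb{P}}[A_T\mid\mathcal{F}_{t-}].
\]
Formula (11), however, gives $Y^{t,x,\mathbb{P}}_{t-}$ as the same conditional expectation with $Z^{t,x,\mathbb{P}}_r$---the second component of the solution to (10)---in place of $D_xV(r,X^{t,x}_r,\mathbf{p}_r)$. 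For a general $\mathbb{P}$ these two processes are different (they agree only once the triple actually solves (10), which fails precisely when $A\not\equiv0$), so the line ``formula (11) yields $Y^{t,x,\mathbb{P}}_{t-}=V(t,x,p)+\mathbb{E}_{\mathbb{P}}[A_T\mid\mathcal{F}_{t-}]$'' is not justified. What your computation really shows is that $s\mapsto V(s,X^{t,x}_s,\mathbf{p}_s)$ satisfies (10) with the driver $H$ diminished by the nondecreasing process $A$; a comparison between this \emph{sub}solution and the true solution $Y^{t,x,\mathbb{P}}$ would indeed give $V(t,x,p)\le Y^{t,x,\mathbb{P}}_{t-}$, but that comparison is not Theorem~A.4 as stated (because $A$ has jump and $d\langle\mathbf{p}^c\rangle$ parts, not only an absolutely continuous part), and you do not carry it out. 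The simplest repair is the paper's: drop the general-$\mathbb{P}$ paragraph and conclude optimality from $Y^{t,x,\bar{\mathbb{P}}}_{t-}=V(t,x,p)$ together with Theorem~3.4.
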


\begin{rem}
The analysis of the deterministic case in \cite{CaRa1} indicates that  the conditions (i) and (ii) might also be necessary even in the non-smooth case. In fact under certain assumtions the conditions (i)-(iii) of Theorem 3.5. can expected to be necessary and sufficient. (See \cite{CaRa1} Example 4.4.)
\end{rem}

\begin{proof}
By definition $V(T,x,p)=\langle g(x),p\rangle$. Since $V\in\mathcal{C}^{1,2,2}$ and  $\bold p$ is purely discontinuous we have by It\^o's formula and the assumptions (i)-(iii)
\begin{eqnarray*}
\langle g(X^{t,x}_T),\bold p_T\rangle&=&V(T,X^{t,x}_T,\bold p_T)\\
&=&V(s,X^{t,x}_s,\bold p_{s})+\int_s^T\left(\frac{\partial}{\partial t}V(r,X^{t,x}_r,\bold p_r)+\frac{1}{2}tr(\sigma\sigma^*(r,X^{t,x}_r)D^2_x V(s,X^{t,x}_r,\bold p_r)\right)dr\\
&&\ \ \ \ \ \ +\int_s^T \sigma^*(r,X^{t,x}_r) D_x V(r,X^{t,x}_r,\bold p_r) dB_r\\
	&&\ \ \ \ \ \ +\sum_{s\leq r\leq T} V(r,X^{t,x}_r,\bold p_{r})-V(r,X^{t,x}_r,\bold p_{r-})-\langle \frac{\partial}{\partial p}V(r,X^{t,x}_r,\bold p_{r-}),\bold p_{r}-\bold p_{r-}\rangle\\
	&=&V(s,X^{t,x}_s,\bold p_{s})-\int_s^T H(r,X^{t,x}_r,D_xV(r,X^{t,x}_r,\bold p_{r}),\bold p_r)dr+\int_s^T \sigma^*(r,X^{t,x}_r) D_x V(r,X^{t,x}_r,\bold p_r) dB_r.
\end{eqnarray*}
So by comparison (Theorem A.4.) $(Y^{t,x,\bar{\mathbb{P}}}_s,Z^{t,x,\bar{\mathbb{P}}}_s,N^{t,x,\bar{\mathbb{P}}}_s):=(V(s,X^{t,x}_s,\bold p_{s}),D_xV(s,X^{t,x}_s,\bold p_{s}),0)$ is the unique solution to the BSDE (10).\\
We have in particular
\begin{eqnarray*}
V(t,x,p)=\langle g(X^{t,x}_T),\bold p_T\rangle-\int_t^T H(s,X^{t,x}_s,Z^{t,x,\bar{\mathbb{P}}}_s,\bold p_s)ds+\int_t^T \sigma^*(s,X^{t,x}_s) Z^{t,x,\bar{\mathbb{P}}}_s dB_s,
\end{eqnarray*}
hence the result follows from taking conditional expectation and the representation in Theorem 3.4.
\end{proof}

\subsection{Optimal information reveal for the informed player}

Our aim is to quantify the amount of information the informed player has to reveal in order to play optimally. Note that in the representation we consider as in \cite{HaLe} the original game under a Girsanov transformation. Hence an optimal measure in (13) gives an information structure of the game only up to a Girsanov transformation, which we have to reverse to get back to our original problem.\\

We assume $V\in\mathcal{C}^{1,2,2}([t,T)\times\mathbb{R}^d\times\Delta(I);\mathbb{R})$. Let $\bar{\mathbb{P}}\in\mathcal{P}(t,p)$, such that the conditions of Theorem 3.6. are fulfilled, hence $Z^{t,x,\bar{\mathbb{P}}}_s=D_xV(s,X^{t,x}_s,\bold p_s)$.\\

Thanks to Isaacs condition, assumption (H) (iv), one can define the function $u^*(t,x,p,\xi)$ as a Borel measurable selection of $\textnormal {argmin}_{u\in U}\{\max_{v\in V} \langle b(t,x,u,v),\xi\rangle +\sum_{i=1}^{I}p_il_i(t,x,u,v)\}$, hence
\begin{eqnarray}
		H(t,x,\xi,p)=\max_{v\in V}\big\{\langle b(t,x,u^*(t,x,p,\xi),v),\xi\rangle+\sum_{i=1}^{I}p_il_i(t,x,u^*(t,x,p,\xi),v)\big\}.
\end{eqnarray}
We define the process
\begin{equation}
\bar{u}_s= u^*(s,X^{t,x}_s,D_xV(s,X^{t,x}_s,\bold p_s),\bold p_s),
\end{equation}
where by definition $\bar{u}$ is progressively measurable with respect to the filtration $(\mathcal{F}_{t,s})_{s\in[t,T]}$ with values in $U$. In the following we will denote the set of such processes the set of relaxed controls $\bar{\mathcal{U}}(t)$ and the set of progressively measurable processes with respect to the filtration $(\mathcal{F}_{t,s})_{s\in[t,T]}$ with values in $V$ the set of relaxed controls $\bar{\mathcal{V}}(t)$.\\

We consider for each relaxed control $v\in\bar{\mathcal{V}}(t)$ the (F)BSDE 
\begin{equation}
\begin{array}{rcl}
X^{t,x}_s&=&x+\int_t^s \sigma(r,X_r^{t,x})dB_r\\
\ \\
Y^{t,x,\bar {u},v}_{s}&=&\langle\bold p_T, g(X^{t,x}_T)\rangle
+\int_{s}^T\big(\langle\bold{p}_r, l(r,X^{t,x}_r,\bar {{u}}_r,v_r)\rangle+\langle b(r,X^{t,x}_r,\bar {u}_r,v_r),D_xV(r,X^{t,x}_r,\bold p_r)\rangle\big) dr\\
\ \\
&& \ \ \ \ \ \ \ \ \ \ \ \ \ \ \ \ \ \ \ \ -\int_s^T\sigma^*(r,X^{t,x}_r)D_xV(r,X^{t,x}_r,\bold p_r)dB_r-(N_T-N_s).
\end{array}
\end{equation}

\begin{thm}
For any  $v\in\bar{\mathcal{V}}(t)$ we have 
\begin{eqnarray}
		Y^{t,x,\bar {{u}},v}_{t-}\leq Y^{t,x,\bar{\mathbb{P}}}_{t-}=V(t,x,p)\ \ \ \ \ \ \bar{\mathbb{P}}\textnormal{-a.s.},
\end{eqnarray}
\end{thm}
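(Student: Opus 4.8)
The plan is to exploit the explicit form of the solution of the BSDE (10) under the optimal measure $\bar{\mathbb{P}}$ obtained in Theorem 3.6 and to compare it with equation (17) by a pointwise estimate on the drivers. Recall from the proof of Theorem 3.6 that, under $\bar{\mathbb{P}}$, the unique solution of (10) is
$$(Y^{t,x,\bar{\mathbb{P}}}_s, Z^{t,x,\bar{\mathbb{P}}}_s, N_s) = \big(V(s, X^{t,x}_s, \bold p_s),\ D_x V(s, X^{t,x}_s, \bold p_s),\ 0\big).$$
In particular the stochastic integral occurring in (10) is $\int \sigma^*(r, X^{t,x}_r) D_x V(r, X^{t,x}_r, \bold p_r)\, dB_r$, which is exactly the Brownian integral prescribed in (17). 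This coincidence is the structural observation that makes the comparison elementary.

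First I would rewrite both values at time $t-$ as conditional expectations under $\bar{\mathbb{P}}$. For $Y^{t,x,\bar{\mathbb{P}}}_{t-}$ this is precisely the representation (11) of Theorem 3.4 with $Z^{t,x,\bar{\mathbb{P}}} = D_x V$, namely
$$Y^{t,x,\bar{\mathbb{P}}}_{t-} = \mathbb{E}_{\bar{\mathbb{P}}}\Big[\langle \bold p_T, g(X^{t,x}_T)\rangle + \int_t^T H(r, X^{t,x}_r, D_x V(r, X^{t,x}_r, \bold p_r), \bold p_r)\, dr \,\Big|\, \mathcal{F}_{t-}\Big].$$
For (17), taking $\mathbb{E}_{\bar{\mathbb{P}}}[\,\cdot\mid\mathcal{F}_{t-}]$ of the equation at $s = t-$ kills the two martingale contributions: the Brownian integral $\int_t^T \sigma^* D_x V\, dB_r$ is a genuine martingale since $\sigma$ is bounded and $D_x V$ is bounded ($V$ being Lipschitz in $x$ by assumption (H)), and $N_T - N_{t-}$ integrates to zero because $N \in \mathcal{M}^2_0(\bar{\mathbb{P}})$. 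Hence
$$Y^{t,x,\bar{u},v}_{t-} = \mathbb{E}_{\bar{\mathbb{P}}}\Big[\langle \bold p_T, g(X^{t,x}_T)\rangle + \int_t^T \big(\langle \bold p_r, l(r, X^{t,x}_r, \bar u_r, v_r)\rangle + \langle b(r, X^{t,x}_r, \bar u_r, v_r), D_x V(r, X^{t,x}_r, \bold p_r)\rangle\big)\, dr \,\Big|\, \mathcal{F}_{t-}\Big].$$

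The two representations share the same terminal term, so it remains to compare the integrands $dr\otimes d\bar{\mathbb{P}}$-almost everywhere. Here I would invoke the defining property (15) of the measurable selection $u^*$: with $\xi = D_x V(r, X^{t,x}_r, \bold p_r)$ and $p = \bold p_r$, and since $\bar u_r = u^*(r, X^{t,x}_r, D_x V(r, X^{t,x}_r, \bold p_r), \bold p_r)$, equation (15) reads
$$H(r, X^{t,x}_r, D_x V(r, X^{t,x}_r, \bold p_r), \bold p_r) = \max_{w\in V}\big\{\langle b(r, X^{t,x}_r, \bar u_r, w), D_x V(r, X^{t,x}_r, \bold p_r)\rangle + \langle \bold p_r, l(r, X^{t,x}_r, \bar u_r, w)\rangle\big\}.$$
Evaluating the maximand at the realised value $w = v_r$ shows that the integrand of $Y^{t,x,\bar u, v}_{t-}$ is pointwise bounded above by $H(r, X^{t,x}_r, D_x V(r, X^{t,x}_r, \bold p_r), \bold p_r)$. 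By monotonicity of the conditional expectation this gives $Y^{t,x,\bar u, v}_{t-} \leq Y^{t,x,\bar{\mathbb{P}}}_{t-}$, and the equality $Y^{t,x,\bar{\mathbb{P}}}_{t-} = V(t,x,p)$ is the conclusion of Theorem 3.6.

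The individual steps are routine; the one point that needs genuine care is the passage from the pathwise definition of $\bar u$ via the Borel selection $u^*$ to a bona fide $dr\otimes d\bar{\mathbb{P}}$-a.e. inequality between \emph{adapted} integrands, i.e. verifying the joint measurability of $(r,\omega)\mapsto \bar u_r(\omega)$ and of the maximand so that (15) may legitimately be applied along the trajectory $(X^{t,x}, \bold p)$. The essential mathematical content, by contrast, is cheap: because $Z^{t,x,\bar{\mathbb{P}}} = D_x V$ coincides with the Brownian integrand fixed in (17), the two stochastic integrals cancel and no BSDE comparison theorem is in fact needed, the inequality reducing to the elementary minimax estimate built into the definition of $u^*$. (Alternatively one could phrase the same argument through the comparison principle of Theorem A.4, but the direct computation above is more transparent.)
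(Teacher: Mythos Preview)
Your proof is correct and rests on the same key inequality as the paper's: the pointwise estimate $H(r,X^{t,x}_r,D_xV(r,X^{t,x}_r,\bold p_r),\bold p_r)\geq \langle b(r,X^{t,x}_r,\bar u_r,v_r),D_xV(r,X^{t,x}_r,\bold p_r)\rangle+\langle \bold p_r,l(r,X^{t,x}_r,\bar u_r,v_r)\rangle$, obtained from the definition of $\bar u$ as a selection of the argmin in (15). The paper simply invokes the comparison principle Theorem~A.4 after stating this inequality, whereas you unpack the argument by hand: since $Z^{t,x,\bar{\mathbb P}}=D_xV$ is known explicitly from Theorem~3.6 and the driver of (16) does not depend on the unknown $Z$, both $Y^{t,x,\bar{\mathbb P}}_{t-}$ and $Y^{t,x,\bar u,v}_{t-}$ reduce to conditional expectations with identical terminal term and comparable integrands, so monotonicity of conditional expectation suffices. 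This is a legitimate and slightly more elementary route---as you observe, the full BSDE comparison machinery is not really needed here because the two equations share the same Brownian integrand $\sigma^*D_xV$---but the mathematical content is identical to the paper's proof.
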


\begin{proof} Since 
\begin{equation*}
\begin{array}{l}
H(r,X^{t,x}_r,D_xV(r,X^{t,x}_r,\bold p_r),\bold p_r)\\
\ \\
=\min_{u\in U} \max_{v\in V} \left\{\langle b(r,X^{t,x}_r,u,v),D_xV(r,X^{t,x}_r,\bold p_r)\rangle +\langle\bold p_r,l(r,X^{t,x}_r,u_r,v)\rangle\right\}\\
\ \\
=\max_{v\in V} \left\{\langle b(r,X^{t,x}_r,\bar {{u}}_r,v),D_xV(r,X^{t,x}_r,\bold p_r)\rangle +\langle\bold p_r,l(r,X^{t,x}_r,\bar {{u}}_r,v)\rangle\right\}\\
\ \\
\geq\langle b(r,X^{t,x}_r,\bar {{u}}_r,v_r),D_xV(r,X^{t,x}_r,\bold p_r)\rangle +\langle\bold p_r,l(r,X^{t,x}_r,\bar {{u}}_r,v_r)\rangle,
\end{array}
\end{equation*}
(17) follows from the comparison Theorem A.4.
\end{proof}

As in \cite{HaLe} we define now for any $v\in\bar{\mathcal{V}}(t)$ the equivalent measure $\bar{\mathbb{P}}^{\bar{{u}},v}=(\Gamma_T^{\bar {{u}},v})\bar{\mathbb{P}}$ with
\begin{eqnarray*}
\Gamma^{\bar{{u}},v}_s=\mathcal{E}\left(\int_t^s b(r,X^{t,x}_r,\bar {{u}}_r,v_r)\sigma^*(r,X^{t,x}_r)^{-1}dB_r\right).
\end{eqnarray*}
for $s\geq t$ and $\Gamma^{\bar{{u}},v}_s=1$  for $s< t$. By Girsanov (see e.g. Theorem III.3.24 \cite{JaShi}) we have the following Lemma.

\begin{lem}
For any $p\in\Delta(I)$, $t\in[0,T]$, $v\in \bar{\mathcal{V}}(t)$, it holds
\begin{itemize}
\item[(i)]	 $X^{t,x}$ is under $\bar{\mathbb{P}}^{\bar{{u}},v}$ a solution to
	\begin{equation}
		\begin{array}{rcl}
		X^{t,x}_s&=&x+\int_t^s b(r,X_r^{t,x},\bar {{u}}_r,v_r)dr+\int_t^s \sigma(r,X_r^{t,x})d\bar B_r,
		\end{array}
		\end{equation}
	where $\bar B$ is a $\bar{\mathbb{P}}^{\bar{{u}},v}$-Brownian motion.
\item[(ii)]	$\bold p$ is a $\bar{\mathbb{P}}^{\bar{{u}},v}$ martingale, such that
	$\bold p_s = p$  $\forall s< t$,
	$\bold p_s \in\{e_i , i = 1, . . . , I \}$  $\forall s \geq T$   $\mathbb{P}^{\bar{{u}},v}$-a.s. and
 $\bold p_T$ is independent of $(B_s)_{s\in(-\infty,T]},$	
	\item[(iii)]  under  $\bar{\mathbb{P}}^{\bar{{u}},v}$  the processes $\bar B_s$ and $\bold p_s$ are strongly orthogonal, i.e. $\langle B,{\bold{p}}^c \rangle_s=0$ for all $ s \in [0,T]$,  where $\bold{p}^c$ denotes the continuous part of $\bold p$. 
\end{itemize}
\end{lem}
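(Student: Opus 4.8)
The plan is to read off all three properties from Girsanov's theorem (Theorem III.3.24 in \cite{JaShi}), the only inputs being the boundedness hypotheses in (H) and the structure of $\mathcal{P}(t,p)$ recorded in Definition 3.1. First I would check that $\Gamma^{\bar u,v}$ is a genuine density: its exponent is a stochastic integral against $B$ whose integrand $b(\cdot)(\sigma^*)^{-1}(\cdot)$ is progressive and, by (H)(i)--(ii), bounded, so Novikov's criterion makes $\Gamma^{\bar u,v}$ a uniformly integrable $\bar{\mathbb{P}}$-martingale and $\bar{\mathbb{P}}^{\bar u,v}=\Gamma^{\bar u,v}_T\,\bar{\mathbb{P}}$ a probability measure equivalent to $\bar{\mathbb{P}}$. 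Girsanov then gives that $\bar B_s:=B_s-\int_t^s\sigma^{-1}(r,X^{t,x}_r)b(r,X^{t,x}_r,\bar u_r,v_r)\,dr$ is a $\bar{\mathbb{P}}^{\bar u,v}$-Brownian motion. Substituting $dB_s=d\bar B_s+\sigma^{-1}(s,X^{t,x}_s)b(s,X^{t,x}_s,\bar u_s,v_s)\,ds$ into $dX^{t,x}_s=\sigma(s,X^{t,x}_s)\,dB_s$ and using that $\sigma\,\sigma^{-1}=\mathrm{Id}$ produces the drift $b\,ds$, which is (18) and hence (i).

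For (ii) I would let $N$ be the continuous martingale part of $\log\Gamma^{\bar u,v}$, a stochastic integral against $B$, and invoke the Girsanov rule that $\bold p-\langle\bold p,N\rangle$ is a $\bar{\mathbb{P}}^{\bar u,v}$-local martingale. The key computation is that this correction vanishes because $\langle\bold p,B\rangle\equiv 0$: indeed $\langle\bold p,B\rangle=\langle\bold p^c,B\rangle+\langle\bold p^d,B\rangle$, the first term being zero by the strong orthogonality in Definition 3.1(iii) and the second because $B$ is continuous and so has no covariation with the purely discontinuous part $\bold p^d$. Hence $\bold p$ remains a local martingale, and since it takes values in the compact set $\Delta(I)$ it is bounded, so it is in fact a true $\bar{\mathbb{P}}^{\bar u,v}$-martingale. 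The pathwise constraints $\bold p_s=p$ for $s<t$ and $\bold p_s\in\{e_i\}$ for $s\geq T$ hold $\bar{\mathbb{P}}^{\bar u,v}$-a.s. because they hold $\bar{\mathbb{P}}$-a.s. and the two measures are equivalent.

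For (iii), write $\bar B-B=-\int_t^\cdot\sigma^{-1}(r,X^{t,x}_r)b(r,X^{t,x}_r,\bar u_r,v_r)\,dr$, a continuous finite-variation process; it contributes nothing to the covariation with the continuous martingale $\bold p^c$, so $\langle\bar B,\bold p^c\rangle=\langle B,\bold p^c\rangle=0$, and since quadratic covariation is invariant under an equivalent change of measure this persists under $\bar{\mathbb{P}}^{\bar u,v}$. The remaining and most delicate point is the independence asserted in (ii), which I read as independence of $\bold p_T$ from the \emph{driving} Brownian motion $\bar B$: here zero covariation is not enough, and one must account for the fact that $\Gamma^{\bar u,v}_T$ depends on $\bold p$ through $\bar u$ and $v$. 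I would settle it by conditioning on the $\sigma$-field $\mathcal{G}_T$ generated by the belief path, under which $B$ is still a Brownian motion (by the orthogonality and product structure underlying $\bar{\mathbb{P}}$) and $\Gamma^{\bar u,v}_T$ is the terminal value of a stochastic exponential in the $B$-direction; applying Girsanov conditionally shows that the conditional law of $\bar B$ given $\mathcal{G}_T$ is Wiener measure and so does not depend on $\bold p$, and integrating out yields $\bold p_T\perp\bar B$. This conditioning step, reconciling the $\bold p$-dependence of the density with the desired factorisation, is where the real work lies; everything else is bookkeeping around Girsanov's theorem.
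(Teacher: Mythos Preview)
The paper gives no proof beyond the sentence ``By Girsanov (see e.g.\ Theorem III.3.24 \cite{JaShi}) we have the following Lemma,'' so your proposal is a fleshing-out of that citation rather than an alternative route. Your treatment of (i), (iii), and the martingale part of (ii) is correct and is exactly what the Girsanov reference encodes.

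You are right to flag the independence clause in (ii) as the delicate point, and your reading of it as independence of $\bold p_T$ from $\bar B$ (rather than from the original $B$) matches what is actually used immediately afterwards, where one needs $\bar B$ to remain a Brownian motion under the conditioned measures $\bar{\mathbb{P}}^{\bar u,v}_i$. One caveat: your conditioning argument assumes that $B$ is still a Brownian motion under $\bar{\mathbb{P}}$ given $\mathcal G_T$, i.e.\ that the full path of $\bold p$ is independent of $B$ under $\bar{\mathbb{P}}$. Definition 3.1 only asserts independence of $\bold p_T$ from $B$ together with $\langle B,\bold p^c\rangle=0$, which in the abstract need not force path-wise independence of $\bold p$ and $B$. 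In the concrete constructions of measures in $\mathcal P(t,p)$ that appear in the paper the two sources of randomness are genuinely separate and your argument goes through, but you should either record this as an additional structural hypothesis on $\bar{\mathbb{P}}$ or note that the paper itself leaves this step unaddressed.
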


For any $\beta\in\mathcal{B}(t)$, i.e. $\beta:[t,T]\times\mathcal{C}([t,T];\mathbb{R}^d)\times L^0([t,T];U)\rightarrow V$ is nonanticipative with delay, we can define the process $\beta(\bar u)_s=\beta(s,\cdot,\bar u_s)$. By definition $\beta(\bar u)$ is a $V$-valued process which is progressively measurable with repect to the filtration $\mathcal{F}_{t,s}$ hence $\beta(\bar u)\in\bar{\mathcal{V}}(t)$. So we can define for any $\beta\in\mathcal{B}(t)$ the measure $\bar{\mathbb{P}}^{\bar u,\beta(\bar u)}$.\\

To take into account that the informed player knows the scenario, we define now for any scenario $i\in\{1,\ldots,I\}$ and for any $\beta\in\mathcal{B}(t)$ a probability measure $\bar{\mathbb{P}}^{\bar{{u}},\beta(\bar u)}_i$ by: 
for all $A\in\mathcal{F}$ it holds
 \[\bar{\mathbb{P}}^{\bar{{u}},\beta(\bar u)}_i[A]= \bar{\mathbb{P}}^{\bar{{u}},\beta(\bar u)}[A|\bold{p}_T=e_i]=\frac{1}{p_i}  \bar{\mathbb{P}}^{\bar{{u}},\beta(\bar u)}[A\cap\{\bold{p}_T=e_i\}],\ \ \textnormal {if } p_i>0,\] 
and $\bar{\mathbb{P}}^{\bar{{u}},\beta(\bar u)}_i[A]= \bar{\mathbb{P}}^{\bar{{u}},v}[A]$. Note that by Lemma 3.9. (ii) $\bar B$ is a  $\bar{\mathbb{P}}^{\bar{{u}},v}_i$-Brownian motion, hence $X^{t,x}$ is under $\bar{\mathbb{P}}^{\bar{{u}},v}_i$ a solution of the SDE (18).

\begin{thm}
For any scenario $i=1,\ldots,I$ and any strategy of the uniformed player $\beta\in{\mathcal{B}}(t)$ the information transmission $\bar{\mathbb{P}}^{\bar{{u}},\beta(\bar u)}_i$ is optimal for the informed player in the sense that for any $\beta\in\mathcal{B}(t)$
\begin{eqnarray}
\sum_{i=1}^I p_i\mathbb{E}_{\bar{ \mathbb{P}}^{\bar {{u}},\beta(\bar u)}_i}\left[ \int_{t}^T l_i(s,X^{t,x}_s,\bar {{u}}_s,\beta(\bar u)_s)ds +g_i(X^{t,x}_T) \right]\leq V(t,x,p).
\end{eqnarray}
\end{thm}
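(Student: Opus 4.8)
The plan is to identify the left-hand side of (19) with the quantity $Y^{t,x,\bar u,\beta(\bar u)}_{t-}$ and then to conclude directly from Theorem 3.8 together with the optimality relation $Y^{t,x,\bar{\mathbb{P}}}_{t-}=V(t,x,p)$ furnished by Theorem 3.6. Fix $\beta\in\mathcal{B}(t)$ and set $v=\beta(\bar u)\in\bar{\mathcal{V}}(t)$, so that the measures $\bar{\mathbb{P}}^{\bar u,v}$, $\bar{\mathbb{P}}^{\bar u,v}_i$ and the solution of the (F)BSDE (16) are all well defined.

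First I would rewrite (16) under the Girsanov-transformed measure $\bar{\mathbb{P}}^{\bar u,v}$. By Lemma 3.9 the process $\bar B$ determined by $dB_r=d\bar B_r+\sigma^{-1}(r,X_r^{t,x})\,b(r,X_r^{t,x},\bar u_r,v_r)\,dr$ is a $\bar{\mathbb{P}}^{\bar u,v}$-Brownian motion, whence $\int_s^T\sigma^*D_xV\,dB_r=\int_s^T\sigma^*D_xV\,d\bar B_r+\int_s^T\langle b,D_xV\rangle\,dr$. Substituting this into (16), the drift $\int_s^T\langle b(r,X_r^{t,x},\bar u_r,v_r),D_xV(r,X_r^{t,x},\bold p_r)\rangle\,dr$ cancels \emph{exactly} against the correction term, leaving
\[
Y^{t,x,\bar u,v}_s=\langle\bold p_T,g(X_T^{t,x})\rangle+\int_s^T\langle\bold p_r,l(r,X_r^{t,x},\bar u_r,v_r)\rangle\,dr-\int_s^T\sigma^*D_xV\,d\bar B_r-(N_T-N_s).
\]
Because the Girsanov density involves only the $B$-integral and $N$ is strongly orthogonal to $\mathcal{I}^2(\bar{\mathbb{P}})$, one has $\langle N,B\rangle=0$, so $N$ remains a $\bar{\mathbb{P}}^{\bar u,v}$-martingale (a true one, since $b$ and $\sigma^{-1}$ are bounded), and the $d\bar B$-integral is a $\bar{\mathbb{P}}^{\bar u,v}$-martingale as well. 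Taking $\mathbb{E}_{\bar{\mathbb{P}}^{\bar u,v}}[\,\cdot\mid\mathcal{F}_{t-}]$ at $s=t-$, and noting that $\bar{\mathbb{P}}$ and $\bar{\mathbb{P}}^{\bar u,v}$ coincide on $\mathcal{F}_{t-}$ where the conditioning is on the (trivial) past of the process started at $(t,x,p)$, so that the value is deterministic, yields
\[
Y^{t,x,\bar u,v}_{t-}=\mathbb{E}_{\bar{\mathbb{P}}^{\bar u,v}}\left[\int_t^T\langle\bold p_r,l(r,X_r^{t,x},\bar u_r,v_r)\rangle\,dr+\langle\bold p_T,g(X_T^{t,x})\rangle\right].
\]

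Next I would pass from this $\bar{\mathbb{P}}^{\bar u,v}$-expectation to the per-scenario form of (19). From the definition of $\bar{\mathbb{P}}^{\bar u,v}_i$ one has the disintegration $\sum_i p_i\,\mathbb{E}_{\bar{\mathbb{P}}^{\bar u,v}_i}[Z]=\sum_i\mathbb{E}_{\bar{\mathbb{P}}^{\bar u,v}}[\mathbf 1_{\{\bold p_T=e_i\}}Z]=\mathbb{E}_{\bar{\mathbb{P}}^{\bar u,v}}[Z]$ for every integrable $Z$, since $\bold p_T\in\{e_1,\dots,e_I\}$ a.s. Applied to the terminal term this gives $\sum_i p_i\,\mathbb{E}_{\bar{\mathbb{P}}^{\bar u,v}_i}[g_i(X_T^{t,x})]=\mathbb{E}_{\bar{\mathbb{P}}^{\bar u,v}}[\langle\bold p_T,g(X_T^{t,x})\rangle]$, using $\langle\bold p_T,g\rangle=g_i$ on $\{\bold p_T=e_i\}$. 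For the running cost the key is the martingale property of $\bold p$: since $l_i(r,X_r^{t,x},\bar u_r,v_r)$ is $\mathcal{F}_r$-measurable and $(\bold p_T)_i=\mathbf 1_{\{\bold p_T=e_i\}}$, conditioning on $\mathcal{F}_r$ gives $\mathbb{E}_{\bar{\mathbb{P}}^{\bar u,v}}[\mathbf 1_{\{\bold p_T=e_i\}}l_i(r,\cdot)]=\mathbb{E}_{\bar{\mathbb{P}}^{\bar u,v}}[(\bold p_r)_i\,l_i(r,\cdot)]$; summing over $i$ and integrating in $r$ identifies $\sum_i p_i\,\mathbb{E}_{\bar{\mathbb{P}}^{\bar u,v}_i}[\int_t^T l_i\,dr]$ with $\mathbb{E}_{\bar{\mathbb{P}}^{\bar u,v}}[\int_t^T\langle\bold p_r,l\rangle\,dr]$. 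Combining the two contributions shows that the left-hand side of (19) equals $Y^{t,x,\bar u,v}_{t-}=Y^{t,x,\bar u,\beta(\bar u)}_{t-}$.

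Finally, Theorem 3.8 applied to the relaxed control $v=\beta(\bar u)$ gives $Y^{t,x,\bar u,\beta(\bar u)}_{t-}\leq Y^{t,x,\bar{\mathbb{P}}}_{t-}$, and by the choice of $\bar{\mathbb{P}}$ the right-hand side equals $V(t,x,p)$ (Theorem 3.6), which establishes (19). I expect the main difficulty to lie in the two bookkeeping steps that make the argument run: first, verifying that the drift generated by rewriting the $dB$-integral under $\bar{\mathbb{P}}^{\bar u,v}$ cancels the $\langle b,D_xV\rangle$ term in (16) — this is precisely why the driver of (16) was designed with that extra drift — and that $N$ survives the change of measure as a martingale, which is guaranteed by its strong orthogonality to $\mathcal{I}^2(\bar{\mathbb{P}})$; second, the replacement of the intermediate belief $\bold p_r$ by the terminal indicator $\mathbf 1_{\{\bold p_T=e_i\}}$ in the running cost, which rests on the martingale property of $\bold p$ under $\bar{\mathbb{P}}^{\bar u,v}$ from Lemma 3.9(ii) and the adaptedness of the integrand. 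Both are conceptually transparent but require care with transposes and with the conditioning.
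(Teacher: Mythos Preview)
Your proposal is correct and follows essentially the same route as the paper: Girsanov to remove the $\langle b,D_xV\rangle$ drift from (16), the martingale property of $\bold p$ under $\bar{\mathbb P}^{\bar u,v}$ to pass between $\langle\bold p_T,\cdot\rangle$ and $\langle\bold p_r,\cdot\rangle$ in the running cost, and then Theorem 3.7. The only nuance is your claim that $Y^{t,x,\bar u,v}_{t-}$ is deterministic: the paper avoids this by simply taking the (unconditional) $\bar{\mathbb P}^{\bar u,v}$-expectation and using the a.s.\ inequality together with the equivalence of $\bar{\mathbb P}$ and $\bar{\mathbb P}^{\bar u,v}$, which is the cleaner way to close the argument.
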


\begin{proof}
By definition we have
\begin{eqnarray*}
&&\sum_{i=1}^I p_i\mathbb{E}_{\bar{ \mathbb{P}}^{\bar {{u}},\beta(\bar u)}_i}\left[ \int_{t}^T l_i(s,X^{t,x}_s,(\bar {{u}})_s,\beta(\bar u)_s)ds +g_i(X^{t,x}_T) \right]\\
&&\ \ \ {=}\sum_{i=1}^I p_i\mathbb{E}_{\bar {\mathbb{P}}^{\bar {{u}},\beta(\bar u)}}\left[ \int_{t}^T l_i(s,X_s^{t,x},\bar {{u}}_s,\beta(\bar u)_s)ds+g_i(X^{t,x}_T) \big| \bold{p}_T=e_i \right].
\end{eqnarray*}
Furthermore 
\begin{eqnarray*}
&&\sum_{i=1}^I p_i\mathbb{E}_{\bar {\mathbb{P}}^{\bar {{u}},\beta(\bar u)}}\left[ \int_{t}^T l_i(s,X_s^{t,x},\bar {{u}}_s,\beta(\bar u)_s)ds+g_i(X^{t,x}_T) \big| \bold{p}_T=e_i \right]\\
&&\ \ \ =\sum_{i=1}^I \bar {\mathbb{P}}^{\bar {{u}},\beta(\bar u)}[\bold {p}_T=e_i]\mathbb{E}_{\bar {\mathbb{P}}^{\bar {{u}},\beta(\bar u)}}\left[ \int_{t}^T l_i(s,X_s^{t,x},\bar {{u}}_s,\beta(\bar u)_s)ds+g_i(X^{t,x}_T) \big| \bold{p}_T=e_i \right]\\
&&\ \ \ =\sum_{i=1}^I \mathbb{E}_{\bar {\mathbb{P}}^{\bar {{u}},\beta(\bar u)}}\left[ 1_{\{\bold {p}_T=e_i\}}\int_{t}^T l_i(s,X_s^{t,x},\bar {{u}}_s,\beta(\bar u)_s)ds+g_i(X^{t,x}_T) \right]\\
&&\ \ \ =\mathbb{E}_{\bar {\mathbb{P}}^{\bar {{u}},\beta(\bar u)}}\left[ \langle \bold{p}_T, \int_{t}^T l(s,X^{t,x}_s,\bar {{u}}_s,\beta(\bar u)_s)ds\rangle +\langle\bold p_T, g(X^{t,x}_T)\rangle\right]\\
&&\ \ \ =\mathbb{E}_{\bar {\mathbb{P}}^{\bar {{u}},\beta(\bar u)}}\left[  \int_{t}^T\langle\bold{p}_s, l(s,X^{t,x}_s,\bar {{u}}_s,\beta(\bar u)_s)\rangle ds +\langle\bold p_T, g(X^{t,x}_T)\rangle\right],
\end{eqnarray*}
where in the last step we used the product rule for the ${\bar {\mathbb{P}}^{\bar {{u}},\beta(\bar u)}}$-martingale $\bold p$ and the adapted finite variation process $\int_{t}^\cdot l(s,X^{t,x}_s,\bar {{u}}_s,\beta(\bar u)_s)ds$.\\
Furthermore we have
\begin{eqnarray*}
\mathbb{E}_{{\bar {\mathbb{P}}^{\bar {{u}},\beta(\bar u)}}}\left[  \int_{t}^T\langle\bold{p}_s, l(s,X^{t,x}_s,\bar {{u}}_s,\beta(\bar u)_s)\rangle ds +\langle\bold p_T, g(X^{t,x}_T)\rangle\right]=\mathbb{E}_{\bar {\mathbb{P}}^{\bar{{u}},\beta(\bar u)}}\left[ Y^{t,x,\bar {{u}},\beta(\bar u)}_{t-}\right],
\end{eqnarray*}
since by Girsanov $Y^{t,x,\bar {{u}},\beta(\bar u)}_{s}$ is under $\bar {\mathbb{P}}^{\bar{{u}},\beta(\bar u)}$ given by
\begin{equation}
	\begin{array}{rcl}
	Y^{t,x,\bar {{u}},\beta(\bar u)}_{s}&=&\langle\bold p_T, g(X^{t,x}_T)\rangle+\int_{s}^T\langle\bold{p}_r, l(r,X^{t,x}_r,\bar {{u}}_r,\beta(\bar u)_r)\rangle dr\\
	\ \\
			&&\ \ -\int_s^T\sigma^*(r,X^{t,x}_r)D_xV(r,X^{t,x}_r,\bold p_r)d\bar B_r-(N_T-N_s).
\end{array}
		\end{equation}
So since by Theorem 3.7. $Y^{t,x,\bar {{u}},\beta(\bar u)}_{t-}\leq V(t,x,p)$ $\bar{\mathbb{P}}$-a.s. and $\bar{\mathbb{P}}$ is equivalent to ${\bar {\mathbb{P}}^{\bar {{u}},\beta(\bar u)}}$, we have
\[\mathbb{E}_{{\bar {\mathbb{P}}^{\bar {{u}},\beta(\bar u)}}}\left[ Y^{t,x,\bar {{u}},\beta(\bar u)}_{t-}\right]\leq V(t,x,p).\]
\end{proof}

\begin{rem}
In the simpler case of \cite{CaRa1} the representation (13) allowed to derive an optimal random control for the informed player in a direct feedback from. Here however there are significant differences. By the Girsanov transformation we have for each $\beta\in\mathcal{B}(t)$ at each time $s\in[t,T]$ an optimal reaction $\bar{u}_s= u^*(s,X^{t,x}_s,D_xV(s,X^{t,x}_s,\bold p_s),\bold p_s)$ of the informed player. It depends on the state of the system, i.e. $X^{t,x}$ under $\bar {\mathbb{P}}^{\bar {{u}},\beta(\bar u)}_i$ and the shifted randomization $\bold p$ under the optimal measure $\bar {\mathbb{P}}^{\bar {{u}},\beta(\bar u)}_i$. Since this shift depends on the strategy $\beta$ of the uniformed player, we do not find a random control but a kind of random strategy for the informed player. Note that this ``strategy" - none of the less giving us a recipe how the informed player can generate the optimal information flow - is in general not of the form required in definition 2.4. To get a classical random strategy it would be necessary to show a certain structure of the optimal measure $\bar{\mathbb{P}}$. In a subsequent paper we show how this can be established for $\epsilon$-optimal measures leading to $\epsilon$-optimal strategies in the sense of definition 2.4.
\end{rem}

\section{Proof of Theorem 3.4.}

\subsection{The function $W(t,x,p)$ and $\epsilon$-optimal strategies}

Recall that we defined $W(t,x,p)$ $\mathbb{Q}\textnormal{-a.s.}$ as $\textnormal{essinf}_{\mathbb{P} \in \mathcal{P}(t,p)} Y^{t,x,\mathbb{P}}_{t-}$, where by definition a random variable $\xi$ is called  $\textnormal{essinf}_{\mathbb{P} \in \mathcal{P}(t,p)} Y^{t,x,\mathbb{P}}_{t-}$, if
\begin{itemize}
\item[(i)] $\xi\leq Y^{t,x,\mathbb{P}}_{t-}$, $\mathbb{Q}$-a.s., for any $\mathbb{P} \in \mathcal{P}(t,p)$
\item[(ii)] if there is another random variable $\eta$ such that $\eta\leq Y^{t,x,\mathbb{P}}_{t-}$, $\mathbb{Q}$-a.s., for any $\mathbb{P} \in \mathcal{P}(t,p)$, then $\eta\leq\xi$, $\mathbb{Q}$-a.s.
\end{itemize}
So by its very definition $W(t,x,p)$ is merely a ${\mathcal{F}_{t-}}$ measurable random field. However we show that it is deterministic and hence a good candidate to represent the deterministic value function $V(t,x,p)$. Our proof is mainly based on the methods in \cite{BuLi}.

\begin{prop}
For any $t\in[0,T]$, $x\in\mathbb{R}^d$, $p\in\Delta(I)$ it holds
\begin{eqnarray}
W(t,x,p)=\mathbb{E}_{\mathbb{Q}}[W(t,x,p)]\ \ \ \ \ \ \ \ {\mathbb{Q}}\textnormal{-a.s.}
\end{eqnarray}
Hence identifying $W(t,x,p)$ with its deterministic version $\mathbb{E}_{\mathbb{Q}}[W(t,x,p)]$ we can consider $W:[0,T]\times\mathbb{R}^d\times\Delta(I)\rightarrow\mathbb{R}$ as a deterministic function.
\end{prop}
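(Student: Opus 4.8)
The plan is to exploit that $W(t,x,p)=\operatorname{essinf}_{\mathbb{P}\in\mathcal{P}(t,p)}Y^{t,x,\mathbb{P}}_{t-}$ is $\mathcal{F}_{t-}$-measurable while the data entering the backward equation live entirely after time $t$. Under $\mathbb{Q}=\delta(p)\otimes\mathbb{P}^0$ the martingale $\mathbf{p}$ is frozen at $p$ on $[0,t)$, so $\mathcal{F}_{t-}$ is generated, up to $\mathbb{Q}$-null sets, by the initial Brownian trajectory $(B_s)_{s<t}$; moreover, since all $\mathbb{P}\in\mathcal{P}(t,p)$ coincide on $\mathcal{F}_{t-}$ with $\mathbb{Q}$, the representation of Theorem 3.4 shows that $Y^{t,x,\mathbb{P}}_{t-}=\mathbb{E}_{\mathbb{P}}[\,\Xi^{\mathbb{P}}\,|\,\mathcal{F}_{t-}]$ depends on $\mathbb{P}$ only through the conditional law of the ``future'' $\big((B_r-B_t)_{r\ge t},(\mathbf{p}_r)_{r\ge t}\big)$ given $\mathcal{F}_{t-}$, where $\Xi^{\mathbb{P}}=\int_t^T H(r,X^{t,x}_r,Z^{t,x,\mathbb{P}}_r,\mathbf{p}_r)\,dr+\langle\mathbf{p}_T,g(X^{t,x}_T)\rangle$. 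I would single out the subclass $\mathcal{P}_0(t,p)\subset\mathcal{P}(t,p)$ of measures under which this future is independent of $\mathcal{F}_{t-}$. For $\mathbb{P}\in\mathcal{P}_0(t,p)$ the forward process $X^{t,x}$, the terminal datum $g(X^{t,x}_T)$, the driver $H(\cdot,X^{t,x}_\cdot,\cdot,\mathbf{p}_\cdot)$, and hence, by uniqueness in Theorem 3.4, the whole solution triple are functionals of the future alone, so $\Xi^{\mathbb{P}}$ is independent of $\mathcal{F}_{t-}$ and $Y^{t,x,\mathbb{P}}_{t-}=\mathbb{E}_{\mathbb{P}}[\Xi^{\mathbb{P}}]$ is a deterministic constant.

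Then I would prove that the essential infimum is unchanged when restricted to $\mathcal{P}_0(t,p)$, which immediately yields the claim. The inequality $W(t,x,p)\le\inf_{\mathbb{P}\in\mathcal{P}_0(t,p)}Y^{t,x,\mathbb{P}}_{t-}$ is trivial since $\mathcal{P}_0(t,p)\subset\mathcal{P}(t,p)$ and the right-hand side is a constant. For the reverse inequality I would disintegrate an arbitrary $\mathbb{P}\in\mathcal{P}(t,p)$ along $\mathcal{F}_{t-}$: denoting by $\nu^{\omega'}$ the regular conditional law of the future given $\mathcal{F}_{t-}$ at the point $\omega'$, I would glue it to the correct pre-$t$ marginal to form the product measure $\mathbb{P}_{\omega'}:=\mathbb{Q}|_{\mathcal{F}_{t-}}\otimes\nu^{\omega'}$, which by construction belongs to $\mathcal{P}_0(t,p)$ and satisfies $Y^{t,x,\mathbb{P}_{\omega'}}_{t-}=Y^{t,x,\mathbb{P}}_{t-}(\omega')$ for $\mathbb{Q}$-a.e.\ $\omega'$. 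Consequently $Y^{t,x,\mathbb{P}}_{t-}(\omega')\ge\inf_{\mathbb{P}'\in\mathcal{P}_0(t,p)}Y^{t,x,\mathbb{P}'}_{t-}$ for a.e.\ $\omega'$ and every $\mathbb{P}$, so $W(t,x,p)\ge\inf_{\mathcal{P}_0(t,p)}Y^{t,x,\cdot}_{t-}$. Both bounds together give that $W(t,x,p)$ equals this constant $\mathbb{Q}$-a.s., i.e. $W(t,x,p)=\mathbb{E}_{\mathbb{Q}}[W(t,x,p)]$.

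The main obstacle is the disintegration step, which hides two points that must be made rigorous. First, one must check that the regular conditional probabilities inherit the defining structure of $\mathcal{P}(t,p)$, namely that, for $\mathbb{Q}$-a.e.\ $\omega'$, $\mathbf{p}$ remains a martingale on $[t,T]$, $B$ remains a Brownian motion, and the strong orthogonality $\langle B,\mathbf{p}^c\rangle=0$ persists under $\nu^{\omega'}$; this is the standard but delicate fact that these properties pass to regular conditional laws given $\mathcal{F}_{t-}$, so that indeed $\mathbb{P}_{\omega'}\in\mathcal{P}(t,p)\cap\mathcal{P}_0(t,p)$. Second, and more substantially, one needs a stability statement for the backward equation under conditioning, namely that the solution of (10) computed under the product measure $\mathbb{P}_{\omega'}$ reproduces the conditional solution under $\mathbb{P}$, giving $Y^{t,x,\mathbb{P}_{\omega'}}_{t-}=\mathbb{E}_{\mathbb{P}}[\Xi^{\mathbb{P}}\mid\mathcal{F}_{t-}](\omega')$. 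This should follow because (10) lives on $[t,T]$ and is driven only by future objects, so its unique solution is a measurable functional of the future law; nevertheless the argument requires care with the joint measurability of $\omega'\mapsto\mathbb{P}_{\omega'}$ and $\omega'\mapsto Y^{t,x,\mathbb{P}_{\omega'}}_{t-}$ and with the exceptional null sets, and it is here that the methods of \cite{BuLi} are invoked.
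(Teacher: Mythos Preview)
Your approach is genuinely different from the paper's, and it is worth spelling out the contrast. The paper does \emph{not} disintegrate along $\mathcal{F}_{t-}$; instead it follows the Cameron--Martin technique of \cite{BuLi}. Concretely, for any $h\in H_t$ (an element of the Cameron--Martin space that is constant after time $t$) one checks in three steps that (i) $Y^{t,x,\mathbb{P}}_{t-}\circ\tau_h=Y^{t,x,\mathbb{P}\circ[\tau_h]^{-1}}_{t-}$ via Girsanov, (ii) the essinf commutes with composition by $\tau_h$ because $\mathbb{Q}\circ[\tau_h]^{-1}\sim\mathbb{Q}$, and (iii) the family $\{Y^{t,x,\mathbb{P}\circ[\tau_h]^{-1}}_{t-}:\mathbb{P}\in\mathcal{P}(t,p)\}$ coincides with $\{Y^{t,x,\mathbb{P}}_{t-}:\mathbb{P}\in\mathcal{P}(t,p)\}$ after repairing the pre-$t$ marginal. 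This yields $W(t,x,p)\circ\tau_h=W(t,x,p)$ for all $h\in H_t$, and then a lemma from \cite{BuLi} converts this shift invariance into independence from $\sigma(B_s,\,s<t)$, hence determinism.

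Your disintegration route is natural and, at the level of ideas, sound: you isolate the subclass $\mathcal{P}_0(t,p)$ with product structure, on which $Y^{t,x,\mathbb{P}}_{t-}$ is manifestly constant, and then argue that every $\mathbb{P}\in\mathcal{P}(t,p)$ can be sliced into members of $\mathcal{P}_0(t,p)$ without changing the value at each fibre. You correctly flag the two places where work is needed (inheritance of the martingale, Brownian, and orthogonality properties by regular conditional laws; stability of the BSDE solution under conditioning). One correction, however: when you write ``it is here that the methods of \cite{BuLi} are invoked'', that is not what \cite{BuLi} does. \cite{BuLi} is precisely the Cameron--Martin argument the paper adopts, not a disintegration argument, so you would need to supply these measurability and conditional-BSDE facts yourself rather than cite them. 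The Cameron--Martin route buys you a proof that sidesteps regular conditional probabilities and the attendant null-set bookkeeping entirely, at the cost of a Girsanov computation; your route is conceptually transparent but heavier on measure-theoretic detail, and in particular the claim $Y^{t,x,\mathbb{P}_{\omega'}}_{t-}=Y^{t,x,\mathbb{P}}_{t-}(\omega')$, while believable, needs a genuine argument since $Z^{t,x,\mathbb{P}}$ enters $\Xi^{\mathbb{P}}$ and one must show the solution map depends only on the future law.
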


To prove that $W(t,x,p)$ is deterministic it suffices to show that it is independent of the $\sigma$-algebra $\sigma(B_s,{s\in[0,t]})$. Since $\bold {p}$ is on $[0,t[$ $\mathbb{Q}$-a.s. a constant the desired result follows.\\
To show the independence of  $\sigma(B_s,{s\in[0,t]})$ we will use as in \cite{BuLi} a perturbation of $\mathcal{C}([0,T];\mathbb{R}^d)$ with certain elements of the Cameron-Martin space.  Let $H$ denote the Cameron-Martin space of all absolutely continuous elements $h\in\mathcal{C}([0,T];\mathbb{R}^d)$, whose 
Radon-Nikodym derivative $\dot{h}$ belongs to $L^2([0, T ];\mathbb{R}^d)$. Denote $H_t=\{h\in H: h(\cdot)=h(\cdot \wedge t)\}$.
For any $h\in H_t$ , we define  for all $(\omega_p,\omega_B)\in\mathcal{D}([0,T];\Delta(I))\times \mathcal{C}([0,T];\mathbb{R}^d)$ the mapping $\tau_h(\omega_p,\omega_B):=(\omega_p,\omega_B+ h)$. Then $\tau_h : \mathcal{D}([0,T];\Delta(I))\times \mathcal{C}([0,T];\mathbb{R}^d) \rightarrow\mathcal{D}([0,T];\Delta(I))\times \mathcal{C}([0,T];\mathbb{R}^d)$ is a $\mathcal{F}-\mathcal{F}$ measurable bijection with $[\tau_h]^{-1}=\tau_{-h}$.

\begin{lem}
For any $h\in H_t$
\begin{eqnarray}
W(t,x,p)\circ{\tau_h}=W(t,x,p).
\end{eqnarray}
\end{lem}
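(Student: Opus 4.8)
The plan is to deduce \eqref{} from two ingredients: that the shift $\tau_h$ leaves untouched everything that determines the cost on $[t,T]$, and that $\tau_h$ induces a bijection of $\mathcal{P}(t,p)$ onto itself under which $Y^{t,x,\mathbb{P}}_{t-}$ transforms equivariantly. Granting a map $\mathbb{P}\mapsto\mathbb{P}^h$ with $Y^{t,x,\mathbb{P}}_{t-}\circ\tau_h=Y^{t,x,\mathbb{P}^h}_{t-}$ $\mathbb{Q}$-a.s., the conclusion is soft. For every $\mathbb{P}$ one has $W(t,x,p)\le Y^{t,x,\mathbb{P}}_{t-}$ $\mathbb{Q}$-a.s.; since $\mathbb{Q}$ restricted to $\mathcal{F}_{t-}$ is $\delta(p)\otimes\mathbb{P}^0$ and Wiener measure is quasi-invariant under Cameron--Martin shifts (so $\mathbb{Q}\circ\tau_{-h}\sim\mathbb{Q}$ on $\mathcal{F}_{t-}$, whence $\tau_h^{-1}$ of a $\mathbb{Q}$-full set is $\mathbb{Q}$-full), composing with $\tau_h$ preserves $\mathbb{Q}$-a.s.\ inequalities. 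Thus $W(t,x,p)\circ\tau_h\le Y^{t,x,\mathbb{P}^h}_{t-}$ for every $\mathbb{P}$, and by surjectivity of $\mathbb{P}\mapsto\mathbb{P}^h$ the right-hand family exhausts $\mathcal{P}(t,p)$; by maximality of the essential infimum, $W(t,x,p)\circ\tau_h\le W(t,x,p)$. Running the same argument with $-h$ and composing with $\tau_h$ gives the reverse inequality, hence the claimed equality.

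First I would record what $\tau_h$ does. Because $h\in H_t$ is constant on $[t,T]$ and $\tau_h$ acts only on the $\mathcal{C}([0,T];\mathbb{R}^d)$-coordinate, the increments $\mathrm{d}B_r$ for $r>t$ are unchanged while the $\mathcal{D}([0,T];\Delta(I))$-coordinate is fixed. Hence $\bold p$ is $\tau_h$-invariant, and since $X^{t,x}_s$ for $s\ge t$ is the strong solution of \eqref{} driven only by the post-$t$ increments of $B$, so is $X^{t,x}_s$. Consequently the terminal value $\langle\bold p_T,g(X^{t,x}_T)\rangle$, the driver $H(r,X^{t,x}_r,\cdot,\bold p_r)$ and the $B$-integrator on $[t,T]$ are all $\tau_h$-invariant: the whole BSDE \eqref{}, read on $[t,T]$, has $\tau_h$-invariant data.

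Next I construct $\mathbb{P}^h$. Using the decomposition $\Omega_t=\Omega_{t,t}\times\Omega_t$ furnished by the map $\pi$ (which records the future as increments off $\omega_{t-}$, so that gluing at $t$ stays continuous), disintegrate each $\mathbb{P}\in\mathcal{P}(t,p)$ as $\mathbb{Q}(\mathrm{d}\omega_1)\,K^{\mathbb{P}}(\omega_1,\mathrm{d}\omega_2)$, where $K^{\mathbb{P}}$ is a regular conditional distribution of the future given $\mathcal{F}_{t-}$, and put
\[
\mathbb{P}^h(\mathrm{d}\omega_1,\mathrm{d}\omega_2)=\mathbb{Q}(\mathrm{d}\omega_1)\,K^{\mathbb{P}}(\tau_h\omega_1,\mathrm{d}\omega_2).
\]
Since $\tau_h\tau_{-h}=\mathrm{id}$, this is a bijection of $\mathcal{P}(t,p)$ with inverse obtained from $-h$. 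For the equivariance I would transport the solution: applying $\tau_h$ to the $\mathbb{P}$-a.s.\ identity \eqref{} and invoking the invariance of its data shows that $(Y^{t,x,\mathbb{P}}_\cdot,Z^{t,x,\mathbb{P}}_\cdot,N_\cdot)\circ\tau_h$ satisfies \eqref{} $\mathbb{P}\circ\tau_{-h}$-a.s.; because $\mathbb{P}\circ\tau_{-h}$ and $\mathbb{P}^h$ share the same null sets (they agree on the future increments and differ on $\mathcal{F}_{t-}$ only through the a.s.\ positive Cameron--Martin density, which has moments of all orders so that the $L^2$-integrability also transfers), the same triple solves \eqref{} under $\mathbb{P}^h$, with the stochastic integral against $B$ unchanged as the post-$t$ increments are invariant. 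Uniqueness (Theorem 3.3) then forces $Y^{t,x,\mathbb{P}}_{t-}\circ\tau_h=Y^{t,x,\mathbb{P}^h}_{t-}$ $\mathbb{Q}$-a.s.

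The hard part is verifying $\mathbb{P}^h\in\mathcal{P}(t,p)$, i.e.\ that gluing the fixed Brownian past to the $\tau_h$-reindexed conditional future preserves the global conditions of Definition 3.1. Here one uses that, under $\mathbb{P}$, the post-$t$ increments of $B$ have conditional law independent of $\mathcal{F}_{t-}$ (the Brownian property), so reindexing the past by $\tau_h$ destroys neither the Brownian property of $B$ on all of $[0,T]$ nor the continuity at $t$; that the martingale property of $\bold p$ and the orthogonality $\langle B,\bold p^c\rangle=0$ are properties of the future conditional law and persist; and that the law of $\bold p_T$, being already $\mathcal{F}_{t-}$-independent under $\mathbb{P}$, remains a distribution on $\{e_i\}$ independent of $B$ under $\mathbb{P}^h$. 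Once this membership, and the attendant transfer of the orthogonal martingale $N$ between the equivalent measures, are secured, the remaining steps—the invariance of $X^{t,x}$ and $\bold p$, the essential-infimum bookkeeping, and the symmetry $h\leftrightarrow-h$—are routine.
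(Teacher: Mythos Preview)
Your proposal is correct and follows essentially the same approach as the paper: invariance of the BSDE data on $[t,T]$ under $\tau_h$, a bijection $\mathbb{P}\mapsto\mathbb{P}^h$ on $\mathcal{P}(t,p)$ satisfying $Y^{t,x,\mathbb{P}}_{t-}\circ\tau_h=Y^{t,x,\mathbb{P}^h}_{t-}$, and Cameron--Martin quasi-invariance to push the essential infimum through $\tau_h$. The only substantive difference is cosmetic: the paper first identifies $Y^{t,x,\mathbb{P}}_{t-}\circ\tau_h$ with $Y^{t,x,\mathbb{P}\circ[\tau_h]^{-1}}_{t-}$ via Girsanov and then replaces the past of $\mathbb{P}\circ[\tau_h]^{-1}$ by the fixed law $\delta(p)\otimes\mathbb{P}^0$ (a product construction), whereas you build $\mathbb{P}^h$ directly by reindexing the regular conditional kernel, $\mathbb{P}^h(d\omega_1,d\omega_2)=\mathbb{Q}(d\omega_1)K^{\mathbb{P}}(\tau_h\omega_1,d\omega_2)$, and then observe that $(\tau_{-h})_*\mathbb{P}$ and $\mathbb{P}^h$ differ only by the $\mathcal{F}_{t-}$-measurable Cameron--Martin density. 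Your route is arguably cleaner in that it does not tacitly assume independence of the post-$t$ law from $\mathcal{F}_{t-}$, but both arguments land on the same equality of index sets $\{Y^{t,x,\mathbb{P}}_{t-}\circ\tau_h:\mathbb{P}\in\mathcal{P}(t,p)\}=\{Y^{t,x,\mathbb{P}}_{t-}:\mathbb{P}\in\mathcal{P}(t,p)\}$ and conclude identically.
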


\begin{proof}
Obviously $\tau_h,\tau_h^{-1}:\mathcal{D}([0,T];\Delta(I))\times \mathcal{C}([0,T];\mathbb{R}^d) \rightarrow\mathcal{D}([0,T];\Delta(I))\times \mathcal{C}([0,T];\mathbb{R}^d)$ is $\mathcal{F}_{t}-\mathcal{F}_{t}$ measurable and $(B_{s}-B_{t})\circ \tau_h=(B_{s}-B_{t})$ for all $s\in[t,T]$.\\

\textbf{Step 1}: Observe that $X^{t,x}_s\circ \tau_h=X^{t,x}_s$ for all $s\in[t,T]$. Then $Y^{t,x,\mathbb{P}}\circ \tau_h$ is the solution to the BSDE
\begin{equation}
\begin{array}{rcl}
(Y^{t,x,\mathbb{P}}\circ \tau_h)_s&=&\langle\bold p_T, g(X_T^{t,x})\rangle+\int_s^TH(r,X_r^{t,x},(Z^{t,x,\mathbb{P}}\circ \tau_h)_r,\bold{p}_r)ds\\
\ \\
&&\ \ \ \ \ -\int_s^T\sigma^*(r,X_r^{t,x})(Z^{t,x,\mathbb{P}}\circ \tau_h)_rdB_r - (N\circ \tau_h)_T+(N\circ \tau_h)_s
\end{array}
\end{equation}
which is the original BSDE (10) however under the different $\mathbb{P}\circ[\tau_h]^{-1}$ dynamics for $\bold p$.\\
Furthermore $X^{t,x}_{s\in[t,T]}$ under $\mathbb{P}$ and under $\mathbb{P} \circ [\tau_h]^{-1}$ are by Girsanov $\mathbb{P}$-a.s. equal. So under $\mathbb{P} \circ [\tau_h]^{-1}$ the process $Y^{t,x,\mathbb{P}\circ [\tau_h]^{-1}}$ by Girsanov solves (23). Since the solution of (23) is unique we have in particular
\begin{eqnarray}
	Y^{t,x,\mathbb{P}}_{t-}\circ \tau_h = Y^{t,x,\mathbb{P}\circ[\tau_h]^{-1}}_{t-}.
\end{eqnarray}

\textbf{Step 2}: We claim that
\begin{eqnarray}
\left(\textnormal{essinf}_{\mathbb{P} \in \mathcal{P}(t,p)} Y^{t,x,\mathbb{P}}_{t-}\right)\circ \tau_h
	=\textnormal{essinf}_{\mathbb{P} \in \mathcal{P}(t,p)} \left(Y^{t,x,\mathbb{P}}_{t-}\circ \tau_h\right)\ \ \ \ \ \ \mathbb{Q}\textnormal{-a.s.}
\end{eqnarray}
Observe that the law of $\tau_h$ is given by
\begin{eqnarray}
\mathbb{P}\circ[\tau_h]^{-1}=\exp\left(\int_0^{t} \dot h_s dB_s-\frac{1}{2}\int_0^{t}|\dot h_s|^2 ds\right)\mathbb{P}
\end{eqnarray}
for all measures $\mathbb{P}$ on $\Omega$.
Define $I(t,x,p)=\textnormal{essinf}_{\mathbb{P} \in \mathcal{P}(t,p)} Y^{t,x,\mathbb{P}}_{t-}$. Then $I(t,x,p)\leq Y^{t,x,\mathbb{P}}_{t-}$.
Since $\mathbb{Q}\circ[\tau_h]^{-1}$ is equivalent to $\mathbb{Q}$  on $\mathcal{F}_{t-}$, we have $I(t,x,p)\circ \tau_h \leq Y^{t,x,\mathbb{P}}_{t-}\circ\tau_h$ $\mathbb{Q}$-a.s.\\
Furthermore let $\xi$ be a $\mathcal{F}_{t-}$-measurable random variable, such that $\xi\leq Y^{t,x,\mathbb{P}}_{t-}\circ\tau_h$ $\mathbb{Q}$-a.s. Then $\xi \circ [\tau_h]^{-1} \leq Y^{t,x,\mathbb{P}}_{t-}$  $\mathbb{Q}$-a.s.. hence it holds $\xi \circ [\tau_h]^{-1} \leq I(t,x,p)$, so $\xi \leq I(t,x,p)\circ \tau_h$.\\
 Consequently we have
\begin{eqnarray*}
	I(t,x,p)\circ \tau_h=\textnormal{essinf}_{\mathbb{P} \in \mathcal{P}(t,p)} (Y^{t,x,\mathbb{P}}_{t-}\circ \tau_h).
\end{eqnarray*}

\textbf{Step 3}: Using (24) and (25) we have $\mathbb{Q}$-a.s.
\begin{eqnarray*}
	W(t,x,p)\circ \tau_h&=&(\textnormal{essinf}_{\mathbb{P} \in \mathcal{P}(t,p)} Y^{t,x,\mathbb{P}}_{t-})\circ \tau_h\\
	&=&\textnormal{essinf}_{\mathbb{P} \in \mathcal{P}(t,p)} (Y^{t,x,\mathbb{P}}_{t-}\circ \tau_h)\\
	&=& \textnormal{essinf}_{\mathbb{P}\in \mathcal{P}(t,p)} Y^{t,x,\mathbb{P}\circ[\tau_h]^{-1}}_{t-}.
\end{eqnarray*}

Note that in general $\mathbb{P} \circ [\tau_h]^{-1}\not\in\mathcal{P}(t,p)$, since under $\mathbb{P} \circ [\tau_h]^{-1}$ the process $B$ is no longer a Brownian motion on $[0,t]$.
We define $\mathbb{P}^h$ on $\Omega=\Omega_{0,t}\times\Omega_t$, such that
\[ \mathbb{P}^h=(\delta(p)\otimes\mathbb{P}^0)\otimes( \mathbb{P}\circ[\tau_h]^{-1}|_{\Omega_t}),
\]
where $\delta(p)$ is the measure under which $\bold p$ is constant and equal to $p$ and $\mathbb{P}^0$ is a Wiener measure on $\Omega_{0,t}$.
So by definition $(B_s)_{s\in[t,T]}$ is a Brownian motion under $\mathbb{P}^h$. Also $(\bold p_s)_{s\in[t,T]}$ is still a martingale under $\mathbb{P}^h$.
We can see this immediately, since for all $t\leq s\leq r\leq T$ by (26)
\[\mathbb{E}_{\mathbb{P}^h}[\bold p_r|\mathcal{F}_s]=\mathbb{E}_{\mathbb{P} \circ [\tau_h]^{-1}}[\bold p_r|\mathcal{F}_s]=\mathbb{E}_{\mathbb{P}}[\bold p_r|\mathcal{F}_s].\]
Furthermore the remaining conditions of Definition 3.1. are obviously met. Hence $\mathbb{P}^h\in\mathcal{P}(t,p)$ and, since $Y^{t,x,\mathbb{P}\circ [\tau_h]^{-1}}$ is a solution of a BSDE, we have
\[Y^{t,x,\mathbb{P}\circ[\tau_h]^{-1}}_{t-}=Y^{t,x,\mathbb{P}^h}_{t-}.
\]
On the other hand by considering $\mathbb{P}\circ\tau_h$  one can associate to any $\mathbb{P}\in\mathcal{P}(t,p)$ a $\mathbb{P}^{-h}\in\mathcal{P}(t,p)$, such that 
\[Y^{t,x,\mathbb{P}^{-h}\circ[\tau_h]^{-1}}_{t-}=Y^{t,x,\mathbb{P}}_{t-}.
\]
Hence $\left\{Y^{t,x,\mathbb{P}\circ[\tau_h]^{-1}}_{t-}: \mathbb{P}\in\mathcal{P}(t,p)\right\}=\left\{Y^{t,x,\mathbb{P}}_{t-}: \mathbb{P}\in\mathcal{P}(t,p)\right\}$ and
\begin{eqnarray*}
\textnormal{essinf}_{\mathbb{P}\in \mathcal{P}(t,p)} Y^{t,x,\mathbb{P}\circ[\tau_h]^{-1}}_{t-}
	= \textnormal{essinf}_{\mathbb{P}\in \mathcal{P}(t,p)} Y^{t,x,\mathbb{P}}_{t-}=W(t,x,p).
\end{eqnarray*}

\end{proof}

Proposition 3.6. follows then by Lemma 4.1. in \cite{BuLi}.\\

In the following section we establish some regularity results and a dynamic programming principle. To this end we work with $\epsilon$-optimal measures. Note that since we are taking the essential infimum over a family of random variables, existence of an $\epsilon$-optimal $\mathbb{P}^\epsilon\in \mathcal{P}(t,p)$ is not standard. Therefore we provide a technical lemma, the proof of which is also strongly inspired by \cite{BuLi}.

\begin{lem}
For any $(t,x,p)\in[0,T[\times\mathbb{R}^d\times\Delta(I)$ there is an $\epsilon$-optimal $\mathbb{P}^\epsilon\in \mathcal{P}(t,p)$ in the sense that
\begin{equation*}
Y^{t,x,\mathbb{P}^\epsilon}_{t-}\leq W(t,x,p)+\epsilon\ \ \ \ \ \ \ \ \ \mathbb{Q}\textnormal{-a.s.}
\end{equation*}
\end{lem}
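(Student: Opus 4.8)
The plan is to establish the two ingredients that make an essential infimum over a family of random variables attainable up to $\epsilon$: that the family $\{Y^{t,x,\mathbb{P}}_{t-}:\mathbb{P}\in\mathcal{P}(t,p)\}$ is stable under pairwise minimization (downward directed), and that the minimizing elements can be recombined into a single admissible measure by a pasting construction along $\mathcal{F}_{t-}$-measurable sets.

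First I would prove directedness. Given $\mathbb{P}_1,\mathbb{P}_2\in\mathcal{P}(t,p)$, the random variables $Y^{t,x,\mathbb{P}_1}_{t-}$ and $Y^{t,x,\mathbb{P}_2}_{t-}$ are $\mathcal{F}_{t-}$-measurable by the representation (11), so $A:=\{Y^{t,x,\mathbb{P}_1}_{t-}\le Y^{t,x,\mathbb{P}_2}_{t-}\}\in\mathcal{F}_{t-}$. Since every measure in $\mathcal{P}(t,p)$ coincides with $\mathbb{Q}=\delta(p)\otimes\mathbb{P}^0$ on $\mathcal{F}_{t-}$, I write each $\mathbb{P}_j$ as $\mathbb{Q}|_{\mathcal{F}_{t-}}$ paired with a regular conditional law $\kappa_j(\omega,\cdot)$ of the path on $[t,T]$ given $\mathcal{F}_{t-}$, and define $\mathbb{P}_3$ through the kernel $\kappa_3=1_A\kappa_1+1_{A^c}\kappa_2$. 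One has to check $\mathbb{P}_3\in\mathcal{P}(t,p)$: since $A\in\mathcal{F}_{t-}$, the martingale property of $\bold p$, the Brownian character of $B$, the boundary conditions $\bold p_s=p$ for $s<t$ and $\bold p_T\in\{e_i\}$, the independence of $\bold p_T$ from $B$, and the orthogonality $\langle B,\bold p^c\rangle=0$ are each conditionally inherited from $\mathbb{P}_1$ on $A$ and from $\mathbb{P}_2$ on $A^c$, hence hold globally after a tower-property argument across time $t$. By uniqueness of the BSDE (10) and the conditional-expectation representation of Theorem 3.4, one then gets $Y^{t,x,\mathbb{P}_3}_{t-}=1_A Y^{t,x,\mathbb{P}_1}_{t-}+1_{A^c}Y^{t,x,\mathbb{P}_2}_{t-}=\min(Y^{t,x,\mathbb{P}_1}_{t-},Y^{t,x,\mathbb{P}_2}_{t-})$ $\mathbb{Q}$-a.s.

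Given directedness, the standard characterization of the essential infimum yields a sequence $(\mathbb{P}_n)\subset\mathcal{P}(t,p)$ with $Y^{t,x,\mathbb{P}_n}_{t-}\downarrow W(t,x,p)$ $\mathbb{Q}$-a.s. This only provides a monotone almost-sure limit, not a uniform bound, so to produce a single measure I partition along $\mathcal{F}_{t-}$: set $A_1=\{Y^{t,x,\mathbb{P}_1}_{t-}\le W+\epsilon\}$ and $A_n=\{Y^{t,x,\mathbb{P}_n}_{t-}\le W+\epsilon\}\setminus\bigcup_{k<n}A_k$ for $n\ge2$. These are disjoint $\mathcal{F}_{t-}$-measurable sets, and since $Y^{t,x,\mathbb{P}_n}_{t-}\to W$ a.s. they cover the space up to a $\mathbb{Q}$-null set. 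I then paste the conditional kernels, defining $\mathbb{P}^\epsilon$ by $\kappa^\epsilon=\sum_n 1_{A_n}\kappa_n$ with $\kappa_n$ the regular conditional law of the $[t,T]$-path under $\mathbb{P}_n$. Exactly as in the binary case, $\mathbb{P}^\epsilon\in\mathcal{P}(t,p)$ because the defining properties of Definition 3.1 are conditional on $\mathcal{F}_{t-}$ and the $A_n$ are $\mathcal{F}_{t-}$-measurable, while Theorem 3.4 gives $Y^{t,x,\mathbb{P}^\epsilon}_{t-}=Y^{t,x,\mathbb{P}_n}_{t-}\le W+\epsilon$ on each $A_n$, hence $\mathbb{Q}$-a.s.

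The main obstacle is the measure-theoretic pasting itself: constructing $\mathbb{P}^\epsilon$ rigorously via regular conditional probabilities on the product space $\Omega_{0,t}\times\Omega_t$, and verifying that the countable recombination stays inside $\mathcal{P}(t,p)$ — in particular that the martingale property of $\bold p$ survives across the conditioning time $t$ (using $\mathbb{E}_{\mathbb{P}_n}[\bold p_r\,|\,\mathcal{F}_{t-}]=p$ for every $n$) and that the strong orthogonality and the independence of $\bold p_T$ from $B$ are preserved under the gluing. These verifications, together with the essential-infimum machinery, are carried out following the methods of \cite{BuLi}.
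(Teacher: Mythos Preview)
Your proposal is correct and follows essentially the same route as the paper: extract a countable sequence realizing the essential infimum, build an $\mathcal{F}_{t-}$-measurable partition $\{A_n\}$ on which $Y^{t,x,\mathbb{P}_n}_{t-}\le W+\epsilon$, and paste the measures along this partition via the product decomposition $\Omega=\Omega_{0,t}\times\Omega_t$ to obtain a single $\mathbb{P}^\epsilon\in\mathcal{P}(t,p)$. The only difference is that you first establish downward directedness of the family to get a \emph{monotone} minimizing sequence, whereas the paper simply invokes the general fact that any essential infimum is the pointwise infimum over some countable subfamily (no monotonicity needed), so your directedness step is harmless but superfluous; also, the representation you cite for identifying $Y^{t,x,\mathbb{P}^\epsilon}_{t-}$ on each $A_n$ is Theorem~3.3 (the BSDE conditional-expectation formula), not Theorem~3.4.
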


\begin{proof}
Note that there exists a sequence $(\mathbb{P}^n)_{n\in\mathbb{N}}$, $\mathbb{P}^n\in \mathcal{P}(t,p)$, such that
\begin{eqnarray*}
W(t,x,p)=\textnormal{essinf}_{\mathbb{P} \in \mathcal{P}(t,p)}  Y^{t,x,\mathbb{P}}_{t-}=\inf_{n\in\mathbb{N}}Y^{t,x,\mathbb{P}^n}_{t-}.
\end{eqnarray*}
For an $\epsilon>0$ set $\Gamma_n:=\{W(t,x,p)+\epsilon\geq Y^{t,x,\mathbb{P}^n}_{t-} \}\in\mathcal{F}_{t-}$ for any $n\in\mathbb{N}$. Then $\bar{\Gamma}_1:=\Gamma_1$, $\bar{\Gamma}_n:=\Gamma_n\setminus (\cup_{m=1,\ldots,n-1}\bar{\Gamma}_m)$ for $n\geq2$ form a $\mathcal{F}_{t-}$ measurable partition of $\Omega$.\\
We define $\mathbb{P}^\epsilon$, such that on $\Omega=\Omega_{0,t}\times\Omega_t$
\[ \mathbb{P}^\epsilon=(\delta(p)\otimes\mathbb{P}^0)\otimes\left(\sum_{n\in\mathbb{N}} 1_{\bar\Gamma_n}\mathbb{P}^n|_{\Omega_t}\right),
\]
where $\delta(p)$ denotes the measure under which $\bold p$ is constant and equal to $p$ and $\mathbb{P}^0$ is a Wiener measure on $\Omega_{0,t}$.\\
So by definition $(B_s)_{s\in[t,T]}$ is a Brownian motion under $\mathbb{P}^\epsilon$ and $(\bold p_s)_{s\in[t,T]}$ is still a martingale under $\mathbb{P}^\epsilon$, since for all $t\leq s\leq r\leq T$ 
\[\mathbb{E}_{\mathbb{P}^\epsilon}[\bold p_r|\mathcal{F}_s]=\sum_{n\in\mathbb{N}}  \mathbb{E}_{\mathbb{P}^n}[1_{\bar{\Gamma}_n}\bold p_r|\mathcal{F}_s]=\sum_{n\in\mathbb{N}}  1_{\bar{\Gamma}_n} \mathbb{E}_{\mathbb{P}^n}[\bold p_r|\mathcal{F}_s]=\sum_{n\in\mathbb{N}}  1_{\bar{\Gamma}_n}\bold p_s=\bold p_s.\]
Again the remaining conditions of Definition 3.1. are obviously met. Thus $\mathbb{P}^\epsilon\in \mathcal{P}(t,p)$ and it holds 
\begin{eqnarray*}
W(t,x,p)+\epsilon\geq \sum_{n\in\mathbb{N}} 1_{\bar{\Gamma}_n} Y^{t,x,\mathbb{P}^n}_{t-} = Y^{t,x,\mathbb{P}^\epsilon}_{t-}.
\end{eqnarray*}
\end{proof}

\subsection{Some regularity results}

For technical reasons we will consider the BSDE (10) with a slightly different notation. For any $t\in[0,T]$, $x\in\mathbb{R}^d$, ${\mathbb{P} \in \mathcal{P}(t,p)}$ let
\begin{eqnarray}
Y_s^{t,x,\mathbb{P}}&=&\langle\bold p_T, g(X_T^{t,x})\rangle+\int_s^T \tilde H(r,X_r^{t,x},z^{t,x,\mathbb{P}}_r,\bold{p}_r)dr-\int_s^Tz^{t,x,\mathbb{P}}_rdB_r-N_T+N_s,
\end{eqnarray}
where $\tilde {H}(t,x,p,\xi) =H(t,x,p,(\sigma^*(t,x))^{-1}\xi)$. Setting $Z^{t,x,\mathbb{P}}_s= (\sigma^*(s,X_s^{t,x}))^{-1}z^{t,x,\mathbb{P}}_s$ then gives the solution to (10).\\
In the following we will use the notation $Y_s^{t,x,\mathbb{P}}=Y_s^{t,x}$, $z^{t,x,\mathbb{P}}=z^{t,x}$, whenever we work under a fixed ${\mathbb{P} \in \mathcal{P}(t,p)}$.

\begin{rem}
Observe that  by (H) we have that $\tilde {H}$ is uniformly Lipschitz continuous in $(\xi,p)$ uniformly in $(t,x)$ and Lipschitz continuous in $(t,x)$ with Lipschitz constant $c(1+|\xi|)$, i.e. it holds for all $t,t'\in[0,T]$, $x,x'\in\mathbb{R}^d$, $\xi,\xi'\in\mathbb{R}^d$, $p,p'\in\Delta(I)$
\begin{eqnarray}
&&|\tilde H(t,x,\xi,p)|\leq c (1+|\xi|)
\end{eqnarray}
and
\begin{eqnarray}
&&|\tilde H(t,x,\xi,p)-\tilde H(t',x',\xi',p')|\leq c (1+|\xi|)(|x-x'|+|t-t'|)+c|\xi-\xi'|+ c |p-p'|.
\end{eqnarray}
\end{rem}

\begin{prop}
$W(t,x,p)$ is uniformly Lipschitz continuous in $x$ and uniformly H\"older continuous in $t$.
\end{prop}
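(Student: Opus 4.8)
The plan is to prove the two regularity statements by exploiting the representation $W(t,x,p)=\textnormal{essinf}_{\mathbb{P}\in\mathcal{P}(t,p)}Y^{t,x,\mathbb{P}}_{t-}$ together with standard BSDE a priori estimates applied to the reformulated equation (28). The key observation is that the essential infimum of a family of random variables preserves Lipschitz/Hölder bounds that hold \emph{uniformly} over the family: if for every $\mathbb{P}\in\mathcal{P}(t,p)$ one has $|Y^{t,x,\mathbb{P}}_{t-}-Y^{t,x',\mathbb{P}}_{t-}|\le C|x-x'|$, then taking essinf over $\mathbb{P}$ on both sides gives $|W(t,x,p)-W(t,x',p)|\le C|x-x'|$. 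Crucially the same measure $\mathbb{P}\in\mathcal{P}(t,p)$ can be used for both initial points $x,x'$, since $\mathcal{P}(t,p)$ depends only on $(t,p)$ and not on $x$; this is what makes the argument work for the $x$-regularity.

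\emph{Lipschitz in $x$.} First I would fix $\mathbb{P}\in\mathcal{P}(t,p)$ and compare the two solutions $(Y^{t,x},z^{t,x},N^{x})$ and $(Y^{t,x'},z^{t,x'},N^{x'})$ of (28). Writing $\delta Y=Y^{t,x}-Y^{t,x'}$, $\delta z=z^{t,x}-z^{t,x'}$, $\delta N=N^{x}-N^{x'}$, the difference solves a linear BSDE driven by $\langle\bold p_T,g(X^{t,x}_T)-g(X^{t,x'}_T)\rangle$ and by the increment $\tilde H(r,X^{t,x}_r,z^{t,x}_r,\bold p_r)-\tilde H(r,X^{t,x'}_r,z^{t,x'}_r,\bold p_r)$. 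Using the Lipschitz property of $\tilde H$ in $(x,\xi)$ from (30), the Lipschitz continuity of $g$, and the standard SDE estimate $\mathbb{E}_{\mathbb{P}}[\sup_{s}|X^{t,x}_s-X^{t,x'}_s|^2]\le C|x-x'|^2$ (which follows from assumption (H)(ii) on $\sigma$), a standard BSDE a priori estimate (Itô on $|\delta Y|^2$, Gronwall, using orthogonality of $N$ to $\mathcal{I}^2(\mathbb{P})$ so the bracket terms are nonnegative) yields $\mathbb{E}_{\mathbb{P}}[|\delta Y_{t-}|^2]\le C|x-x'|^2$ with $C$ independent of $\mathbb{P}$. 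Since all $\mathbb{P}\in\mathcal{P}(t,p)$ agree on $\mathcal{F}_{t-}$ and $Y^{t,x,\mathbb{P}}_{t-}$ is $\mathcal{F}_{t-}$-measurable, one upgrades this to the pointwise (deterministic) bound $|W(t,x,p)-W(t,x',p)|\le C|x-x'|$ via the essinf argument above.

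\emph{Hölder in $t$.} This is the harder part, for two reasons. First, changing $t$ changes the admissible class $\mathcal{P}(t,p)$ itself, so one cannot simply reuse a single measure for $t$ and $t'$; some construction is needed to transfer a near-optimal measure on $[t',T]$ to one on $[t,T]$ (freezing $\bold p$ at $p$ on $[t,t']$, as in the gluing constructions used for $\mathbb{P}^h$ and $\mathbb{P}^\epsilon$ in Lemmas 4.3 and 4.4). Second, the driver $\tilde H$ is only Lipschitz in $(t,x)$ with the $|\xi|$-dependent constant $c(1+|\xi|)$, so one must control $\mathbb{E}_{\mathbb{P}}[\int|z_r|^2\,dr]$ uniformly in $\mathbb{P}$; this uniform $\mathcal{H}^2$ bound comes from the linear-growth bound (29) on $\tilde H$ together with the boundedness of $g$, giving an a priori estimate $\mathbb{E}_{\mathbb{P}}[\sup_s|Y_s|^2+\int_t^T|z_r|^2dr+\langle N\rangle_T]\le C$ independent of $\mathbb{P}$, by the usual BSDE energy estimate. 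I expect the main technical obstacle to be combining the short-time fluctuation of $Y$ over $[t,t']$—estimated via $\mathbb{E}_{\mathbb{P}}[\int_t^{t'}|\tilde H|\,dr]\le C\sqrt{t'-t}\,(\mathbb{E}[\int|z|^2])^{1/2}+C(t'-t)$ and the $L^2$-continuity of the forward process $X^{t,x}$ in its starting time—with the measure-transfer step, delivering a $\sqrt{|t-t'|}$ modulus and hence $1/2$-Hölder continuity in $t$ uniformly over $\mathcal{P}(t,p)$, which again passes to $W$ through the essinf.
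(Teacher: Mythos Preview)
Your plan follows the paper's approach: uniform-in-$\mathbb{P}$ BSDE estimates combined with forward SDE moment bounds, and for the $t$-regularity the measure transfer by freezing $\bold p$ on the gap interval. One point needs tightening in the $x$-Lipschitz step. Your ``essinf preserves uniform Lipschitz bounds'' principle requires \emph{pointwise} a.s.\ bounds on $Y^{t,x,\mathbb{P}}_{t-}-Y^{t,x',\mathbb{P}}_{t-}$, but the BSDE energy estimate you invoke gives only $\mathbb{E}_{\mathbb{P}}[|\delta Y_{t-}|^2]\le C|x-x'|^2$; these do not combine as stated. The paper instead picks an $\epsilon$-optimal $\mathbb{P}^\epsilon$ for $W(t,x,p)$ (Lemma 4.3), writes $W(t,x',p)-W(t,x,p)-\epsilon\le Y^{t,x',\mathbb{P}^\epsilon}_{t-}-Y^{t,x,\mathbb{P}^\epsilon}_{t-}$ a.s., and takes $\mathbb{E}_{\mathbb{P}^\epsilon}$ of both sides, exploiting that $W$ is already known to be deterministic (Proposition 4.1). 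This also lets one work in $L^1$ rather than $L^2$, which matters here because $\tilde H$ is Lipschitz in $x$ only with the $\xi$-dependent constant $c(1+|\xi|)$ (a subtlety you flag for the $t$-part but not for the $x$-part): an $L^2$ estimate on $\delta Y$ would produce a cross term $\mathbb{E}[\int(1+|z_r|)^2|X^{t,x}_r-X^{t,x'}_r|^2\,dr]$ needing higher moments, whereas in $L^1$ a single Cauchy--Schwarz together with the uniform bound $\mathbb{E}[\int|z_r|^2\,dr]\le c$ from Proposition A.3 suffices. Your $t$-H\"older argument is correct and matches the paper, including the observation $\mathcal{P}(t,p)\subset\mathcal{P}(t',p)$ for $t'\le t$, the freezing construction for the reverse inclusion, and the $|t-t'|^{1/2}$ coming from $\mathbb{E}[\int_{t'}^t(1+|z_r|)\,dr]$.
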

\begin{proof}
For the Lipschitz continuity in $x$, assume $W(t,x',p)-W(t,x,p)>0$ and let $\mathbb{P}^\epsilon\in \mathcal{P}(t,p)$ be $\epsilon$-optimal for $W(t,x,p)$ for a sufficiently small $\epsilon$. Then, since $W(t,x',p), W(t,x,p)$ are deterministic, we have by H\"older inequality and Proposition A.3.
\begin{eqnarray*}
0&\leq& W(t,x',p)-W(t,x,p)-\epsilon\\
\ \ \ &\leq& \mathbb{E}_{\mathbb{P}^\epsilon}\left[\textnormal{essinf}_{\mathbb{P} \in \mathcal{P}(t,p)} \mathbb{E}_\mathbb{P}\left[ \int_t^T\tilde H(s,X^{t,x'}_s,z^{t,x',\mathbb{P} }_s,\bold p_s)ds+\langle\bold p_T,g(X^{t,x'}_T)\rangle\big|\mathcal{F}_{t-}\right]\right]\\
\ \ \ &&\ \ \ \ \  \ \ \ \ \  \ \ \ \ \  \ \ \ \ \  \ \ \ \ \ -\mathbb{E}_{\mathbb{P}^\epsilon}\left[ \int_t^T\tilde H(s,X^{t,x}_s,z^{t,x}_s,{\bold p}_s)ds+\langle\bold p_T,g(X^{t,x}_T)\rangle\right]\\
\ \ \  &\leq&  \mathbb{E}_{\mathbb{P}^\epsilon}\left[ \int_t^T\tilde H(s,X^{t,x'}_s,z^{t,x'}_s,{\bold p}_s)-\tilde H(s,X^{t,x}_s,z^{t,x}_s,{\bold p}_s)ds+\langle\bold p_T,g(X^{t,x'}_T)-g(X^{t,x}_T)\rangle\right]\\
\ \ \  &\leq&  c\mathbb{E}_{\mathbb{P}^\epsilon}\left[ \int_t^T\left((1+|z^{t,x}_s|)|X^{t,x}_s-X^{t,x'}_s|+|z^{t,x}_s-z^{t,x'}_s|\right)ds+|X^{t,x}_T-X^{t,x'}_T|\right]\\
\ \ \  &\leq&  c  \left(\mathbb{E}_{\mathbb{P}^\epsilon}\left[ \int_t^T|z^{t,x}_s|^2ds\right]\right)^\frac{1}{2}\left(\mathbb{E}_{\mathbb{P}^\epsilon}\left[ \int_t^T|X^{t,x}_s-X^{t,x'}_s|^2ds\right]\right)^\frac{1}{2}\\
&&\ \ \ \ \ \ \ \ \ \ + c\mathbb{E}_{\mathbb{P}^\epsilon}\left[ \int_t^T\left(|X^{t,x}_s-X^{t,x'}_s|+|z^{t,x}_s-z^{t,x'}_s|\right)ds+|X^{t,x}_T-X^{t,x'}_T|\right]\\
&\leq&  c \left(\mathbb{E}_{\mathbb{P}^\epsilon}\left[ \int_t^T|X^{t,x}_s-X^{t,x'}_s|^2ds+|X^{t,x}_T-X^{t,x'}_T|^2\right]\right)^\frac{1}{2}\leq c|x-x'|,
\end{eqnarray*}
since for any $s\in[t,T]$ one has by Gronwall $\mathbb{E}_{\mathbb{P}^\epsilon}\left[|X^{t,x}_s-X^{t,x'}_s|^2\right]\leq c |x-x'|^2$.\\
For the H\"older continuity in time, let $t,t'\in[0,T]$ such that $t'\leq t$
and assume $W(t',x,p)>W(t,x,p)$. Let $\mathbb{P}^\epsilon\in \mathcal{P}(t,p)$ be $\epsilon$-optimal for $W(t,x,p)$ for a sufficiently small $\epsilon$. Note that since $t'\leq t$ it holds $\mathbb{P}^\epsilon\in \mathcal{P}(t',p)$. Then, since $W(t',x,p), W(t,x,p)$ are deterministic, we have by H\"older inequality and Proposition A.3.
\begin{eqnarray*}
0&\leq&W(t',x,p)-W(t,x,p)-\epsilon\\
\ \ \  &\leq&\mathbb{E}_{\mathbb{P}^\epsilon}\left[\textnormal{essinf}_{\mathbb{P} \in \mathcal{P}(t',p)}\mathbb{E}_\mathbb{P}\left[ \int_{t'}^T\tilde H(s,X^{t',x}_s,z^{t',x,\mathbb{P} }_s,\bold p_s)ds+\langle\bold p_T,g(X^{t',x}_T)\rangle\big|\mathcal{F}_{t'-}\right]\right]\\
\ \ \ &&\ \ \ \ \  \ \ \ \ \  \ \ \ \ \  \ \ \ \ \  \ \ \ \ \ -\mathbb{E}_{\mathbb{P}^\epsilon}\left[ \int_t^T\tilde H(s,X^{t,x}_s,z^{t,x}_s,{\bold p}_s)ds+\langle\bold p_T,g(X^{t,x}_T)\rangle\right]\\
\ \ \ &\leq&  \mathbb{E}_{\mathbb{P}^\epsilon}\left[ \int_{t'}^t\tilde H(s,X^{t',x}_s,z^{t',x}_s,\bold p_s)ds\right]\\
&&\ \ \ \ \ \ \ \ +\mathbb{E}_{\mathbb{P}^\epsilon}\left[ \int_t^T\tilde H(s,X^{t',x}_s,z^{t',x}_s,{\bold p}_s)-\tilde H(s,X^{t,x}_s,z^{t,x}_s,{\bold p}_s)ds+\langle\bold p_T,g(X^{t',x}_T)-g(X^{t,x}_T)\rangle\right]\\
\ \ \  &\leq&  c\ \mathbb{E}_{\mathbb{P}^\epsilon}\left[ \int_{t'}^t(1+|z^{t',x}_s|)ds\right]\\
&&\ \ \ \ \ \ \ \ +c\ \mathbb{E}_{\mathbb{P}^\epsilon}\left[ \int_{t}^T\left((1+|z^{t',x}_s|)|X^{t',x}_s-X^{t,x}_s|+|z^{t',x}_s-z^{t,x}_s|\right)ds+|X^{t',x}_T-X^{t,x}_T|\right]
\end{eqnarray*}
\begin{eqnarray*}
\ \ \  &\leq&  c |t'-t|^\frac{1}{2}+c  \left(\mathbb{E}_{\mathbb{P}^\epsilon}\left[ \int_t^T|z^{t',x}_s|^2ds\right]\right)^\frac{1}{2}\left(\mathbb{E}_{\mathbb{P}^\epsilon}\left[ \int_t^T|X^{t,x}_s-X^{t',x}_s|^2ds\right]\right)^\frac{1}{2}\\
&&\ \ \ \ \ \ \ \ \ \ + c\mathbb{E}_{\mathbb{P}^\epsilon}\left[ \int_t^T\left(|X^{t',x}_s-X^{t,x}_s|+|z^{t',x}_s-z^{t,x}_s|\right)ds+|X^{t,x}_T-X^{t',x}_T|\right]\\
\ \ \  &\leq&  c |t'-t|^\frac{1}{2}+c \left(\mathbb{E}_{\mathbb{P}^\epsilon}\left[ \int_t^T|X^{t',x}_s-X^{t,x}_s|^2ds+|X^{t',x}_T-X^{t,x}_T|^2\right]\right)^\frac{1}{2}\\
\ \ \  &\leq&  c |t'-t|^\frac{1}{2},
\end{eqnarray*}
because for any $s\in[t,T]$ it holds $\mathbb{E}_{\mathbb{P}^*}\left[|X^{t',x}_s-X^{t,x}_s|^2\right]\leq c |t'-t|$.\\
For the case $t'\leq t$, $W(t',x,p)<W(t,x,p)$ choose a $\mathbb{P}^\epsilon\in \mathcal{P}(t',p)$, which is $\epsilon$-optimal for $W(t',x,p)$ for a sufficiently small $\epsilon$.
We define then the probability measure $\bar {\mathbb{P}}^\epsilon$, such that on $\Omega=\Omega_{0,t}\times\Omega_t$
\[ \mathbb{P}^\epsilon=(\delta(p)\otimes\mathbb{P}^0)\otimes \mathbb{P}^\epsilon|_{\Omega_t},
\]
where $\delta(p)$ denotes the measure under which $\bold p$ is constant and equal to $p$ and $\mathbb{P}^0$ is a Wiener measure on $\Omega_{0,t}$.
So by definition $(B_s)_{s\in[t,T]}$ is a Brownian motion under $\bar {\mathbb{P}}^\epsilon$. Furthermore the remaining conditions of Definition 3.1. are met, hence $\bar{\mathbb{P}}^\epsilon\in\mathcal{P}(t,p)$ and the same argument as above applies in that case.
\end{proof}
\ \\

\begin{prop}
$W(t,x,p)$ is convex and uniformly Lipschitz continuous with respect to $p$.
\end{prop}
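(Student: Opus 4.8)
The plan is to establish the two properties separately, in both cases by exhibiting a competitor measure in $\mathcal{P}(t,\cdot)$ and exploiting the \textnormal{essinf} characterization (13) together with the representation (11), the comparison Theorem A.4 and the $L^2$ estimates of Proposition A.3. Throughout I use the existence of $\epsilon$-optimal measures established above.

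\textbf{Convexity.} Fix $p_0=(1-\lambda)p_1+\lambda p_2$ with $\lambda\in(0,1)$, $p_1,p_2\in\Delta(I)$, and $\epsilon>0$; choose $\epsilon$-optimal $\mathbb{P}^1\in\mathcal{P}(t,p_1)$ and $\mathbb{P}^2\in\mathcal{P}(t,p_2)$. I would construct $\mathbb{P}^0\in\mathcal{P}(t,p_0)$ by the splitting procedure underlying Remark 2.11: let $\bold p$ be constant equal to $p_0$ on $[0,t)$, let it jump at time $t$ (independently of $B$) to $p_1$ with probability $1-\lambda$ and to $p_2$ with probability $\lambda$, and on each of the two branches let $(\bold p_s)_{s\ge t}$ evolve as under $\mathbb{P}^1$, resp. $\mathbb{P}^2$. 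Since $(1-\lambda)p_1+\lambda p_2=p_0$, the martingale property holds across $t$, and $B$ remains a Brownian motion orthogonal to $\bold p^c$, so $\mathbb{P}^0\in\mathcal{P}(t,p_0)$. Because $X^{t,x}$ is driven by $B$ alone and is independent of the branch choice, the conditional law of $(X^{t,x},B,\bold p)$ given each branch coincides with the corresponding law under $\mathbb{P}^1$, resp. $\mathbb{P}^2$; hence, applying uniqueness of the BSDE on each branch and the representation (11), one obtains $Y^{t,x,\mathbb{P}^0}_{t-}=(1-\lambda)Y^{t,x,\mathbb{P}^1}_{t-}+\lambda Y^{t,x,\mathbb{P}^2}_{t-}$. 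By (13) this gives $W(t,x,p_0)\le Y^{t,x,\mathbb{P}^0}_{t-}\le(1-\lambda)W(t,x,p_1)+\lambda W(t,x,p_2)+\epsilon$, and letting $\epsilon\downarrow0$ yields convexity.

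\textbf{Lipschitz continuity in $p$.} By symmetry it suffices to bound $W(t,x,p)-W(t,x,p')$. I would pick an $\epsilon$-optimal $\mathbb{P}'\in\mathcal{P}(t,p')$, realise its terminal scenario as a $B$-independent index $\iota'$ with law $p'$, and build on an enlarged space a maximal coupling with $\iota\sim p$, so that $A:=\{\iota\neq\iota'\}$ satisfies $\mathbb{P}(A)\le\tfrac12|p-p'|_1$. I then define $\mathbb{P}\in\mathcal{P}(t,p)$ whose belief martingale $\bold p_s=\mathbb{E}[e_\iota\mid\mathcal{F}_s]$ reveals $\iota$ along the same schedule as $\mathbb{P}'$ reveals $\iota'$ on $\{\iota=\iota'\}$, and reveals $\iota$ immediately at time $t$ on $A$, so that $\bold p=\bold p'$ off $A$ while $\mathbb{P}\in\mathcal{P}(t,p)$. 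Since both BSDEs carry the same $X^{t,x}$ and the same $B$, the difference $\delta Y=Y^{t,x,\mathbb{P}}-Y^{t,x,\mathbb{P}'}$ solves a \emph{linear} BSDE whose driver and terminal perturbations are bounded by $c|\bold p_r-\bold p'_r|$ and $c|\bold p_T-\bold p'_T|$ (using (30) and boundedness of $g$), both vanishing off $A$. Linearising the $z$-dependence of $\tilde H$ and removing the resulting bounded drift by a Girsanov change of measure represents $\delta Y_{t-}$ as the expectation, under an equivalent measure, of these source terms; as $A$ is independent of $B$ the Girsanov density does not reweight $A$, so $|\delta Y_{t-}|\le c\,\mathbb{P}(A)\le c|p-p'|$. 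Combining with $\epsilon$-optimality and (13) gives $W(t,x,p)-W(t,x,p')\le c|p-p'|+\epsilon$; the reverse inequality follows by swapping the roles of $p,p'$, and $\epsilon\downarrow0$ finishes.

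\textbf{Main obstacle.} The delicate point is the Lipschitz (rather than merely H\"older) rate in $p$. A direct use of the $L^2$ a priori estimate of Proposition A.3, exactly as in the proof of Lipschitz continuity in $x$ above, would only control the data perturbation by $\mathbb{P}(A)^{1/2}=O(|p-p'|^{1/2})$ and thus yield H\"older-$\tfrac12$ continuity. Recovering the sharp rate forces the argument to exploit the linear structure of the BSDE in $\bold p$ (linearity of the terminal cost $\langle\bold p_T,g(X^{t,x}_T)\rangle$ and of the running weights, together with the Lipschitz bound (30)) and the independence $A\perp B$, so that the perturbation is controlled in $L^1$ instead of $L^2$. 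Equally delicate is the coupling construction itself: producing two belief martingales on a common filtered space that coincide off a small event while each retains the martingale, terminal support and orthogonality properties required by Definition 3.1 is the step that must be carried out with care.
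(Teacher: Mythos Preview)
Your convexity argument is the paper's: both construct a measure in $\mathcal{P}(t,p_0)$ by branching at time $t$ into $\mathbb{P}^1,\mathbb{P}^2$ with weights $1-\lambda,\lambda$ and then read off the convex combination after taking expectations.

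For the Lipschitz bound the paper does something quite different from your coupling. It first uses the convexity just established to reduce to the case $p'=e_i$ (a vertex), for which $\mathcal{P}(t,e_i)$ is a singleton. Under an $\epsilon$-optimal $\mathbb{P}^\epsilon\in\mathcal{P}(t,p)$ one then compares the BSDE driven by $\bold p$ with the one driven by the constant $e_i$. The crucial observation is $|\bold p_s-e_i|\le c\,(1-(\bold p_s)_i)$, so the difference is dominated (via Theorem~A.4) by an auxiliary BSDE $\hat Y$ whose data are proportional to the \emph{nonnegative martingale} $1-(\bold p_\cdot)_i$. This permits an exact factorisation $\hat Y_s=(1-(\bold p_s)_i)\tilde Y_s$, where $\tilde Y$ solves a BSDE with deterministic coefficients independent of $\bold p$; evaluating at $s=t-$ gives $\hat Y_{t-}=(1-p_i)\tilde Y_{t-}\le c|p-e_i|$ directly, with no $L^2$-to-$L^1$ loss.

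Your coupling route, by contrast, has a real gap at exactly the point you flag as delicate: the assertion ``$\bold p=\bold p'$ off $A$'' is incompatible with the definition $\bold p_s=\mathbb{E}[e_\iota\mid\mathcal{F}_s]$. Revealing $1_A$ at time $t$ makes the conditional law of $\iota(=\iota')$ on $A^c$ equal to the normalised vector $(\min(p_i,p'_i))_i$, not $p'$; hence on $A^c$ the process $\bold p_s$ is a likelihood-ratio reweighting of $\bold p'_s$ by the factors $\min(p_i,p'_i)/p'_i$, not $\bold p'_s$ itself. This discrepancy is not uniformly $O(|p-p'|)$ pointwise either: with $p'=(\varepsilon,1-\varepsilon)$, $p=(0,1)$ and $\bold p'_s$ visiting $(1/2,1/2)$, the reweighted process sits at $(0,1)$, at distance of order $1$ from $\bold p'_s$. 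Thus the source terms in your linearised BSDE do \emph{not} vanish off $A$, and the $L^1$ bound you need collapses. A direct repair --- defining $\bold p$ to equal $\bold p'$ on $A^c$ by fiat rather than via conditional expectation --- runs into the problem of checking that the resulting process is a martingale in its own filtration; this is not automatic and is precisely what the paper's vertex-reduction and factorisation trick circumvent.
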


\begin{proof}
To show the convexity in $p$ let $p_1,p_2\in\Delta(I)$ and let $\mathbb{P}^{1}\in\mathcal{P}(t,p_1)$, $\mathbb{P}^{2}\in\mathcal{P}(t,p_2)$ be $\epsilon$-optimal for $W(t,x,p_1)$, $W(t,x,p_2)$ respectively. For $\lambda\in[0,1]$ define a martingale measure $\mathbb{P}^\lambda\in\mathcal{P}(t,p_\lambda)$, such that for all measurable $\phi:\mathcal{D}([0,T];\Delta(I))\times\mathcal{C}([0,T];\mathbb{R}^d)\rightarrow\mathbb{R}_+$
\begin{eqnarray*}
\mathbb{E}_{\mathbb{P}^\lambda}[\phi(\bold p,B)]=\lambda \mathbb{E}_{\mathbb{P}^1}[\phi(\bold p,B)]+(1-\lambda)\mathbb{E}_{\mathbb{P}^2}[\phi(\bold p,B)].
\end{eqnarray*}
Observe that we just take two copies $\Omega_1$, $\Omega_2$ of the same space with weights $\lambda,(1-\lambda)$. So for the respective solutions of the BSDE (27) it holds
\begin{eqnarray*}
Y^{t,x,\mathbb{P}^\lambda}=1_{\{\Omega_1\}}Y^{t,x,\mathbb{P}^1}+1_{\{\Omega_2\}}Y^{t,x,\mathbb{P}^2}.
\end{eqnarray*}
Hence
\begin{eqnarray*}
W(t,x,p_\lambda)&\leq&Y^{t,x,\mathbb{P}^\lambda}_{t-}= 1_{\{\Omega_1\}}Y^{t,x,\mathbb{P}^1}_{t-}+1_{\{\Omega_2\}}Y^{t,x,\mathbb{P}^2}_{t-}
\leq  1_{\{\Omega_1\}} W(t,x,p_1)+1_{\{\Omega_2\}} W(t,x,p_2)+2\epsilon
\end{eqnarray*}
and the convexity follows by taking expectation, since $\epsilon$ can be chosen arbitrarily small.\\
Next we prove the uniform Lipschitz continuity in $p$. Since we have convexity in $p$, it suffices to show the Lipschitz continuity with respect to $p$ on the extreme points $e_i$. Observe that $\mathcal{P}(t,e_i)$ consists in the single probability measure $\delta(e_i)\otimes\mathbb{P}^0$, where $\delta(e_i)$ is the measure under which $\bold p$ is constant and equal to $e_i$ and $\mathbb{P}^0$ is a Wiener measure.\\
Assume $W(t,x,p)-W(t,x,e_i)>0$ and let $\mathbb{P}^\epsilon\in \mathcal{P}(t,p)$ be $\epsilon$-optimal for $W(t,x,p)$ for a sufficiently small $\epsilon$. Then
\begin{eqnarray*}
&&0\leq W(t,x,p)-W(t,x,e_i)-\epsilon\\
&&\ \ \  \leq  \mathbb{E}_{\mathbb{P}^\epsilon}\left[ \int_t^T \tilde H(s,X^{t,x}_s,z^{t,x}_s,\bold p_s)-\tilde H(s,X^{x,t}_s,z^{t,x,e_i}_s,e_i)ds+\langle\bold p_T-e_i, g(X_T^{x,t})\rangle\big|\mathcal{F}_{t-}\right]\\
&&\ \ \  \leq   Y^{t,x}_{t-}-Y^{t,x,e_i}_{t-}.
\end{eqnarray*}
By the uniform Lipschitz continuity of $\tilde H$ in $\xi$ and $p$ it holds
\begin{eqnarray*}
Y^{t,x}_{t-}-Y^{t,x,e_i}_{t-}&\leq& \langle\bold p_T-e_i, g(X_T^{x,t})\rangle+c\int_t^T\left(|z^{t,x}_s-z^{t,x,e_i}_s|+|\bold{p}_s-e_i|\right)ds\\
&&\ \ \ \  -\int_t^T (z^{t,x}_s-z^{t,x,e_i}_s) dB_s -(N-N^{e_i})_T+(N-N^{e_i})_{t-}\\
&\leq& c\left ( \int_t^T (1-(\bold p_s)_i)ds+1-(\bold p_T)_i\right)+c\ \int_t^T|z^{t,x}_s-z^{t,x,e_i}_s|ds \\
&&\ \ \ \ -\int_t^T  (z_s^{t,x}-z^{t,x,e_i}_s) dB_s - (N-N^{e_i})_T+(N-N^{e_i})_{t-},
\end{eqnarray*}
where we used the estimate
\begin{eqnarray*}
\int_t^T|\bold p_s-e_i|ds+|\bold p_T-e_i|\leq  c\left ( \int_t^T (1-(\bold p_s)_i)ds+1-(\bold p_T)_i\right).
\end{eqnarray*}
We define $\hat Y$ as the unique solution to the BSDE
\begin{eqnarray*}
\hat Y_{s}=c\left ( \int_s^T (1-(\bold p_r)_i)dr+1-(\bold p_T)_i\right)+c\ \int_s^T|\hat z_r|dr -\int_s^T \hat z_r dB_r
-(\hat N_T-\hat N_s).
\end{eqnarray*}
Then by comparison (Theorem A.4.) we have
\begin{eqnarray*}
Y_{t-}^{t,x}-Y^{t,x,e_i}_{t-}\leq \hat Y_{t-}.
\end{eqnarray*}
We claim that $\hat Y_{s}=\left(1-(\bold p_{s})_i\right)\tilde Y_{s}$, where $\tilde Y_{s}$ is on $s\in[t,T]$ the solution to 
\begin{eqnarray}
\tilde Y_{s}=c+c\ (T-s)+ \int_s^T|\tilde z_r|dr -\int_s^T \tilde z_r dB_r.
\end{eqnarray}
This follows directly by applying the It\^o folmula
\begin{eqnarray*}
\left( 1-(\bold p_s)_i\right)\tilde Y_s&=&c\ \left( 1-(\bold p_T)_i\right)+c \int_s^T\left(1-(\bold p_r)_i\right)dr+\int_s^T \left|\left(1-(\bold p_r)_i\right)\tilde z_r\right|ds\\
&&\ \ \ \ \ \ 	-\int_s^T \left(1-(\bold p_r)_i\right)\tilde z_rdB_r+\int_{s}^T\tilde Y_r d(\bold p_r)_i
\end{eqnarray*}
and identifying $\hat z_s = \left(1-(\bold p_s)_i\right)\tilde z_s$ and $\tilde N_s=\int_0^s \tilde Y_r d(\bold p_r)_i$ which is by the definition of $\mathcal P(t,p)$ strongly orthogonal to $\mathcal{I}^2(\mathbb{P}^\epsilon)$. Furthermore  
\begin{eqnarray*}
1-(\bold p_{t-})_i=1- p_i\leq c \sum_{j} |(p)_j-\delta_{ij}|\leq c\sqrt{I}|p-e_i|,
\end{eqnarray*}
hence
\begin{eqnarray*}
	Y^{t,x}_{t-}-Y^{t,x,e_i}_{t-} \leq \hat Y_{t-}= (1-(\bold p_{t-})_i)\tilde Y_{t-}\leq c\sqrt{I}|p-e_i|\tilde Y_{t-}.
\end{eqnarray*}
It is well known (see e.g. \cite{ElK}) that, the solution $\tilde Y$ to (30) is continuous, bounded in $\mathcal{L}^1$ and $\tilde Y_{t}$ is deterministic. So $\tilde Y_{t-}=\tilde Y_{t}\leq c$ and we have
 \begin{eqnarray*}
	Y^{t,x}_{t-}-Y^{t,x,e_i}_{t-} \leq c\sqrt{I}|p-e_i|.
\end{eqnarray*}

\end{proof}

\subsection{Dynamic Programming Principle}

Next we show that a dynamic programming principle holds. To that end we introduce the set $\mathcal{P}^{f}(t,p)$ as the set of all measures $\mathbb{P} \in  \mathcal{P}(t,p)$, such that there exists a finite set $S\subset\Delta (I)$ with $\bold p_s\in S$ $\mathbb{P}$-a.s. for all $s\in[t,T]$. It is well known (see e.g. \cite{Pr} Theorem II.4.10) that $\mathcal{P}^{f}(t,p)$ is dense in $\mathcal{P}(t,p)$ with respect to the weak$^*$ topology.\\

\begin{thm}
For all  $(t,x,p)\in[0,T]\times\mathbb{R}^d\times\Delta(I)$, $t'\in[t,T]$
\begin{eqnarray}
W(t,x,p)=\textnormal{essinf}_{\mathbb{P}\in \mathcal{P}(t,p)} \mathbb{E}_\mathbb{P}\left[\int_{t}^{t'} \tilde H(s,X_s^{t,x},z^{t,x,\mathbb{P}}_s,\bold p_s)ds+W(t',X_{t'}^{t,x},\bold p_{t'-})\big|\mathcal{F}_{t-}\right].
\end{eqnarray}
\end{thm}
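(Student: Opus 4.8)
I would prove the two inequalities ``$\ge$'' and ``$\le$'' separately, in both cases first reducing to measures in $\mathcal{P}^f(t,p)$ under which $\bold p$ takes only finitely many values; the passage to general $\mathbb{P}\in\mathcal{P}(t,p)$ is then obtained from the weak-$*$ density of $\mathcal{P}^f(t,p)$ together with the Lipschitz continuity of $W$ in $(x,p)$ and a standard stability estimate for the BSDE (27). The backbone of both inequalities is the flow property inherited from the representation in Theorem 3.4: for every fixed $\mathbb{P}\in\mathcal{P}(t,p)$ the tower property of conditional expectations gives
$$Y_{t-}^{t,x,\mathbb{P}}=\mathbb{E}_\mathbb{P}\left[\int_t^{t'}\tilde H(s,X_s^{t,x},z_s^{t,x,\mathbb{P}},\bold p_s)\,ds+Y_{t'-}^{t,x,\mathbb{P}}\,\Big|\,\mathcal{F}_{t-}\right],$$
where I have replaced $Y_{t'}^{t,x,\mathbb{P}}$ by its left limit $Y_{t'-}^{t,x,\mathbb{P}}$, which is legitimate since the two share the same $\mathcal{F}_{t-}$-conditional expectation (the jump of $Y$ at $t'$ is the jump of the martingale $N$, of vanishing conditional mean). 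The left limits $\bold p_{t'-}$ and $Y_{t'-}$ are the natural objects here, because a jump of $\bold p$ at $t'$ must be counted as part of the ``fresh'' problem started at $t'$ from the belief $\bold p_{t'-}$.

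\textbf{The inequality $W\ge$ RHS.} I would fix $\mathbb{P}\in\mathcal{P}(t,p)$ and disintegrate it along $\mathcal{F}_{t'-}$: a regular conditional probability produces, for $\mathbb{P}$-a.e.\ $\omega$, a measure $\mathbb{P}^\omega$ which I claim belongs to $\mathcal{P}(t',\bold p_{t'-}(\omega))$, the jump of $\bold p$ at $t'$ and its subsequent evolution forming the martingale of the subproblem, with $(B_s-B_{t'})_{s\ge t'}$ still Brownian and the orthogonality of Definition 3.1(iii) preserved. Since $X^{t,x}$ is continuous, $X_{t'}^{t,x}$ is $\mathcal{F}_{t'-}$-measurable, and by the Markov structure of the forward equation together with uniqueness of the BSDE I identify $Y_{t'-}^{t,x,\mathbb{P}}(\omega)=Y_{t'-}^{t',X_{t'}^{t,x}(\omega),\mathbb{P}^\omega}\ge W(t',X_{t'}^{t,x}(\omega),\bold p_{t'-}(\omega))$ by the very definition of $W$ as an essinf. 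Inserting this lower bound into the flow identity and taking the essinf over $\mathbb{P}$ yields $W(t,x,p)\ge\mathrm{RHS}$.

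\textbf{The inequality $W\le$ RHS.} Since $W(t,x,p)$ is deterministic, it suffices to exhibit, for every $\delta>0$, a single measure whose integrand in the right-hand side is at most $W(t,x,p)+\delta$. I take a measure $\mathbb{P}^{\mathrm{opt}}$ that is $\epsilon$-optimal for $W(t,x,p)$ and build $\mathbb{P}^*$ by keeping the law of $(\bold p,B)$ of $\mathbb{P}^{\mathrm{opt}}$ on $[t,t']$ and pasting after $t'$, on each event $\{\bold p_{t'-}=q_k\}$, an $\epsilon$-optimal $\mathbb{Q}_k\in\mathcal{P}(t',q_k)$ for $W(t',\cdot\,,q_k)$, selected measurably in $(X_{t'}^{t,x},\bold p_{t'-})$. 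Then $Y_{t'-}^{t,x,\mathbb{P}^*}=Y_{t'-}^{t',X_{t'}^{t,x},\mathbb{Q}_k}$ lies between $W(t',X_{t'}^{t,x},\bold p_{t'-})$ and $W(t',X_{t'}^{t,x},\bold p_{t'-})+\epsilon$, so on the one hand the comparison principle (Theorem A.4) applied on $[t,t']$ under the common $[t,t']$-dynamics gives $Y_{t-}^{t,x,\mathbb{P}^*}\le Y_{t-}^{t,x,\mathbb{P}^{\mathrm{opt}}}+C\epsilon\le W(t,x,p)+C'\epsilon$; and on the other hand, since $Y_{t'-}^{t,x,\mathbb{P}^*}\ge W(t',X_{t'}^{t,x},\bold p_{t'-})$, the flow identity for $\mathbb{P}^*$ shows that its right-hand-side integrand is bounded above by $Y_{t-}^{t,x,\mathbb{P}^*}$. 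Combining, $\mathrm{RHS}\le W(t,x,p)+C'\epsilon$, and letting $\epsilon\to0$ closes the proof.

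\textbf{Main obstacle.} The delicate point throughout is the measurable concatenation of martingale measures used to build $\mathbb{P}^*$ (and, dually, the disintegration producing $\mathbb{P}^\omega$): one must check that the pasted belief process $\bold p$ remains a martingale across the junction $t'$ — this is exactly where it is used that each $\mathbb{Q}_k$ starts as a martingale from $q_k=\bold p_{t'-}$ — that $B$ stays a Brownian motion, and that the strong orthogonality in Definition 3.1(iii) survives the gluing, all while retaining measurability of the selection $\omega\mapsto\mathbb{Q}_k$. Restricting to $\mathcal{P}^f(t,p)$ (finitely many values $q_k$) reduces the selection to a finite choice and, after discretizing $X_{t'}^{t,x}$ on a grid and controlling the error through the Lipschitz continuity of $W$ in $x$, makes the construction explicit; the density of $\mathcal{P}^f(t,p)$ and the continuity of $W$ then remove the restriction.
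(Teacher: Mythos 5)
Your proposal is correct and follows essentially the same route as the paper: reduction to the finitely-valued class $\mathcal{P}^f(t,p)$, the tower/flow identity for $Y^{t,x,\mathbb{P}}$, a conditioning step giving $Y^{t,x,\mathbb{P}}_{t'-}\geq W(t',X^{t,x}_{t'},\bold p_{t'-})$ (the paper's Lemma 4.8, proved there by spatial discretization of $X^{t,x}_{t'}$ plus the BSDE stability estimate and explicit conditioning on the atoms $\{\bold p_{t'-}=p^m\}$ rather than an abstract regular conditional probability), and a pasting lemma producing near-optimal continuations after $t'$ (Lemma 4.9). The only cosmetic difference is in the ``$\leq$'' inequality, where the paper starts the pasting from an $\epsilon$-optimal measure for the right-hand side while you start from one for $W(t,x,p)$ and invoke the comparison theorem; both variants close the argument.
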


Since $\mathcal{P}^{f}(t,p)$ is a dense subset of $\mathcal{P}(t,p)$ with respect to the weak$^*$ topology, it suffices to show
\begin{eqnarray}
W(t,x,p)= \textnormal{essinf}_{\mathbb{P}\in \mathcal{P}^{f}(t,p)} \mathbb{E}_\mathbb{P}\left[\int_{t}^{t'}  \tilde H(s,X_s^{t,x},z^{t,x,\mathbb{P}}_s,\bold p_s)ds+W(t',X_{t'}^{t,x},\bold p_{t'-})\big|\mathcal{F}_{t-}\right]
\end{eqnarray}
for all $(t,x,p)\in[0,T]\times\mathbb{R}^d\times\Delta(I)$.\\

For the proof of Theorem 4.7. we first show two Lemmas.

\begin{lem}
Under any $\mathbb{P}\in\mathcal{P}^{f}(t,p)$
\begin{eqnarray}
Y^{t,x,\mathbb{P}}_{t'-} \geq W(t',X^{t,x}_{t'},\bold p_{t'-}).
\end{eqnarray}
\end{lem}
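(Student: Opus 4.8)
The plan is to reduce $Y^{t,x,\mathbb{P}}_{t'-}$ to a conditional expectation via Theorem 3.4, to reinterpret it as the time-$t'$ value of the same BSDE computed under the regular conditional law of $\mathbb{P}$ given $\mathcal{F}_{t'-}$, and then to invoke the definition of $W$ as an essential infimum over $\mathcal{P}(t',\cdot)$. First I would record that $Y^{t,x,\mathbb{P}}_{t'-}=\mathbb{E}_{\mathbb{P}}\big[\int_{t'}^{T}\tilde H(s,X^{t,x}_s,z^{t,x,\mathbb{P}}_s,\mathbf{p}_s)\,ds+\langle\mathbf{p}_T,g(X^{t,x}_T)\rangle\,\big|\,\mathcal{F}_{t'-}\big]$. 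This follows from Theorem 3.4 by writing the solution of (27) as $M_s-\int_0^s\tilde H\,dr$, where $M_s=\mathbb{E}_{\mathbb{P}}[\int_0^T\tilde H\,dr+\langle\mathbf{p}_T,g(X^{t,x}_T)\rangle\mid\mathcal{F}_s]$ is a c\`adl\`ag martingale; since the finite-variation part is continuous, the left limit at $t'$ is governed by $M_{t'-}=\mathbb{E}_{\mathbb{P}}[\,\cdot\,\mid\mathcal{F}_{t'-}]$, which gives the identity.

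Next I would exploit that $X^{t,x}_s=X^{t',X^{t,x}_{t'}}_s$ for $s\ge t'$, so that the conditional expectation above is precisely the time-$t'$ value of the BSDE (27) started from the (random) initial datum $(t',X^{t,x}_{t'})$, but evaluated under the conditional law of $\mathbb{P}$ given $\mathcal{F}_{t'-}$. The heart of the argument is to realize this conditional law as the future part of an admissible measure. Since $\mathbb{P}\in\mathcal{P}^f(t,p)$, the variable $\mathbf{p}_{t'-}$ takes values in a finite set $S$, so I partition $\Omega$ into the $\mathcal{F}_{t'-}$-measurable events $\{\mathbf{p}_{t'-}=q\}$, $q\in S$. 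Fixing a regular conditional probability of $\mathbb{P}$ given $\mathcal{F}_{t'-}$ and pasting it, on $\Omega=\Omega_{0,t'}\times\Omega_{t'}$, with $\delta(q)\otimes\mathbb{P}^0$ in the past coordinate — the same construction already used to build $\mathbb{P}^h$, $\mathbb{P}^\epsilon$ and $\bar{\mathbb{P}}^\epsilon$ earlier — yields for $\mathbb{P}$-a.e. $\omega$ a measure $\bar{\mathbb{P}}^\omega\in\mathcal{P}(t',\mathbf{p}_{t'-}(\omega))$ whose restriction to $\Omega_{t'}$ agrees with $\mathbb{P}(\cdot\mid\mathcal{F}_{t'-})(\omega)$. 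Because the past and future coordinates are independent under $\bar{\mathbb{P}}^\omega$ and the BSDE on $[t',T]$ sees only the future, one obtains $Y^{t',X^{t,x}_{t'}(\omega),\bar{\mathbb{P}}^\omega}_{t'-}=Y^{t,x,\mathbb{P}}_{t'-}(\omega)$ for $\mathbb{P}$-a.e. $\omega$.

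Finally, recalling that $W$ is deterministic, the defining property of the essential infimum gives $W(t',x',q)\le Y^{t',x',\bar{\mathbb{P}}}_{t'-}$ for every fixed $x'$, $q$ and every $\bar{\mathbb{P}}\in\mathcal{P}(t',q)$; applied on each event $\{\mathbf{p}_{t'-}=q\}$ with $x'=X^{t,x}_{t'}(\omega)$ and $\bar{\mathbb{P}}=\bar{\mathbb{P}}^\omega$, this produces $Y^{t,x,\mathbb{P}}_{t'-}\ge W(t',X^{t,x}_{t'},\mathbf{p}_{t'-})$, $\mathbb{P}$-a.s., which is (33). I expect the principal difficulty to lie in the measure-theoretic bookkeeping of the middle step: constructing the regular conditional probabilities, verifying that the pasted measures satisfy all of Definition 3.1 (the martingale property of $\mathbf{p}$ on $[t',T]$ under the conditional law, the Brownian and strong-orthogonality conditions, and $\mathbf{p}_T\in\{e_i\}$ independent of $B$), and arranging enough measurability in $\omega$ to pass from the fixed-measure inequality for $W$ to the pathwise bound. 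This is exactly the point at which the finiteness encoded in $\mathcal{P}^f(t,p)$ is essential, as it reduces the disintegration to finitely many constant values $q\in S$.
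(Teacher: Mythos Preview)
Your strategy is sound and reaches the same endpoint as the paper --- pasting the future of $\mathbb{P}$ with a trivial past to land in $\mathcal{P}(t',q)$ and then invoking the definition of $W$ --- but the paper takes a more elementary route that sidesteps exactly the measure-theoretic bookkeeping you flag. Instead of disintegrating by a full regular conditional probability $\bar{\mathbb{P}}^\omega$ and plugging in the random initial point $X^{t,x}_{t'}(\omega)$, the paper first discretizes space: it fixes an $\epsilon$-fine Borel partition $(A_l)$ of $\mathbb{R}^d$, picks representatives $y^l\in A_l$, and shows via the Lipschitz estimates of $\tilde H$, Proposition~A.3 and Gronwall that $Y^{t,x}_{t'-}$ differs from $\sum_l Y^{t',y^l}_{t'-}\mathbf{1}_{\{X^{t,x}_{t'}\in A_l\}}$ by at most $c\epsilon$. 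Only then, for each \emph{fixed} $y^l$, does it decompose over the finitely many values $p^m$ of $\mathbf{p}_{t'-}$ and build a single measure $\mathbb{P}^m\in\mathcal{P}(t',p^m)$ (not a family indexed by $\omega$), yielding $Y^{t',y^l,\mathbb{P}}_{t'-}\ge W(t',y^l,\mathbf{p}_{t'-})$; the Lipschitz continuity of $W$ in $x$ (Proposition~4.5) absorbs the remaining $c\epsilon$.

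What the paper's approach buys is that it never needs the identity $Y^{t',X^{t,x}_{t'}(\omega),\bar{\mathbb{P}}^\omega}_{t'-}=Y^{t,x,\mathbb{P}}_{t'-}(\omega)$, whose justification requires checking that the restricted triple $(Y,z,N)|_{[t',T]}$ is still a BSDE solution under the conditional law, that the orthogonality in Definition~3.1(iii) survives conditioning, and that the resulting number depends measurably on $\omega$. Your approach is conceptually cleaner --- no $\epsilon$-grid, no Lipschitz passage --- but these verifications are genuine work, and the finiteness of $\mathbf{p}_{t'-}$ alone does not handle the continuum of values of $X^{t,x}_{t'}$; that is precisely what the spatial discretization is there for.
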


\begin{proof}
Fix $\mathbb{P}\in\mathcal{P}^{f}(t,p)$ and $t'\in[t,T]$. Let $(A_l)_{l\in\mathbb{N}}$ be a partition of $\mathbb{R}^d$ in Borel sets, such that $\textnormal{diam}(A_l)\leq \epsilon$ and choose for any $l\in\mathbb{N}$ some $y^l\in A_l$.
Let $z^{t',y^l}$ denote the $z$ term of the solution of BSDE (27) with forward dynamics $X^{t',y^l}$ instead of $X^{t,x}$.
First observe that 
\begin{eqnarray*}
Y^{t,x}_{t'-}&=&\mathbb {E}_\mathbb{P}\left[\int_{t'}^{T}  \tilde H(s,X_s^{t,x},z^{t,x}_s,\bold p_s)ds+\langle\bold p_T,g(X^{t,x}_T) \rangle\big|\mathcal{F}_{t'-}\right]\\
&=&\sum_{l=1}^\infty \mathbb{E}_{\mathbb{P}}\left[\left(\int_{t'}^{T}  \tilde H(s,X_s^{t,x},z_s^{t,x},\bold p_s)ds+\langle\bold p_T,g(X^{t,x}_T) \rangle \right)\big|\mathcal{F}_{t'-}\right]1_{\{X^{t,x}_{t'}\in A^l\}}\\
&\geq& \sum_{l=1}^\infty \mathbb{E}_{\mathbb{P}}\left[\int_{t'}^{T}  \tilde H(s,X_s^{t',y^l},z^{t',y^l}_s,\bold p_s)ds+\langle\bold p_T,g(X^{t',y^l}_T) \rangle\big|\mathcal{F}_{t'-}\right]1_{\{X^{t,x}_{t'}\in A^l\}}\\
&&\ \ \ \ \ \ \ - c \sum_{l=1}^\infty \mathbb{E}_{\mathbb{P}}\left[\int_{t'}^{T}\left(|z^{t',y^l}_s-z^{x,t}_s|+(1+ |z^{x,t}_s|)|X^{t',y^l}_{s}-X^{t,x}_{s}|\right)  ds+|X^{t',y^l}_T-X^{t,x}_T|\big|\mathcal{F}_{t'-}\right]1_{\{X^{t,x}_{t'}\in A^l\}}
\end{eqnarray*}
where by H\"older inequality, Proposition A.3. and Gronwall inequality
\begin{eqnarray*}
&& \sum_{l=1}^\infty \mathbb{E}_{\mathbb{P}}\left[\int_{t'}^{T}\left(|z^{t',y^l}_s-z^{x,t}_s|+(1+ |z^{x,t}_s|)|X^{t',y^l}_{s}-X^{t,x}_{s}|\right)  ds+|X^{t',y^l}_T-X^{t,x}_T|\big|\mathcal{F}_{t'-}\right]1_{\{X^{t,x}_{t'}\in A^l\}}\\
&\leq&c \sum_{l=1}^\infty \mathbb{E}_{\mathbb{P}}\left[\int_{t'}^{T}\left(|z^{t',y^l}_s-z^{t,x}_s|^2+|X^{t',y^l}_{s}-X^{t,x}_{s}|^2\right)  ds+|X^{t',y^l}_T-X^{t,x}_T|^2\big|\mathcal{F}_{t'-}\right]^\frac{1}{2}1_{\{X^{t,x}_{t'}\in A^l\}}\\
&\leq& c \sum_{l=1}^\infty \mathbb{E}_{\mathbb{P}}\left[\int_{t'}^{T}|X^{t',y^l}_{s}-X^{t,x}_{s}|^2 ds+|X^{t',y^l}_{T}-X^{t,x}_{T}|^2 \big|\mathcal{F}_{t'-}\right]^\frac{1}{2}1_{\{X^{t,x}_{t'}\in A^l\}}\\
&\leq& c \sum_{l=1}^\infty \mathbb{E}_{\mathbb{P}}\left[|X^{t,x}_{t'}-y^l|^2\big|\mathcal{F}_{t'-}\right]^\frac{1}{2}1_{\{X^{t,x}_{t'}\in A^l\}}
\leq c \sum_{l=1}^\infty |X^{t,x}_{t'}-y^l|1_{\{X^{t,x}_{t'}\in A^l\}}
\leq  c\epsilon.
\end{eqnarray*}
Hence
\begin{eqnarray}
\sum_{l=1}^\infty Y^{t',y^l}_{t'-} 1_{\{X^{t,x}_{t'}\in A^l\}}- c \epsilon \leq Y^{t,x}_{t'-}\leq \sum_{l=1}^\infty Y^{t',y^l}_{t'-} 1_{\{X^{t,x}_{t'}\in A^l\}}+c \epsilon.
\end{eqnarray}
where the upper bound is given by similar argumentation.
Furthermore by assumption there exist $S=\{p^1,\ldots,p^k\}$, such that $\mathbb{P}[\bold p_{t'-}\in S]=1$. 
We define for $m=1,\ldots,k$ the probablility measures $\mathbb{P}^m$, such that on $\Omega=\Omega_{0,t}\times\Omega_t$
\[ \mathbb{P}^m=(\delta(p^m)\otimes\mathbb{P}^0)\otimes\left(1_{\{\bold p_{t'-}=p_m\}}\mathbb{P}|_{\Omega_t}\right),
\]
where $\delta(p^m)$ denotes the measure under which $\bold p$ is constant and equal to $p^m$ and $\mathbb{P}^0$ is a Wiener measure on $\Omega_{0,t}$.
So by definition $(B_s)_{s\in[t,T]}$ is a Brownian motion under $\mathbb{P}^m$ and $(\bold p_s)_{s\in[t,T]}$ is a martingale.
We see this, since for $t' \leq s\leq T$
\[
\mathbb{E}_{\mathbb{P}^m}[\bold p_s|\mathcal{F}_{t'-}]=\mathbb{E}_{\mathbb{P}}[1_{\{\bold p_{t'-}=p_m\}}\bold p_s|\mathcal{F}_{t'-}]=1_{\{\bold p_{t'-}=p^m\}}\bold p_{t'-}=p^m.
\]
Furthermore the remaining conditions of Definition 3.1. are met, hence  $\mathbb{P}^m\in\mathcal{P}^f(t,p)$ for $m=1,\ldots,k$ and
\begin{eqnarray*}
Y^{t',y^l,\mathbb{P}^m}_{t'-}1_{\{\bold p_{t'-}=p_m\}}\geq W(t',y^l,p^m)1_{\bold p_{t'-}=p^m}.
\end{eqnarray*}
So it holds
\begin{eqnarray*}
Y^{t',y^l,\mathbb{P}}_{t'-} =\sum_{m=1}^{k} Y^{t',y^l,\mathbb{P}^m}_{t'-}1_{\{\bold p_{t'-}=p_m\}}\geq \sum_{m=1}^{k}W(t',y^l,p^m)1_{\{\bold p_{t'-}=p_m\}}= W(t',y^l,\bold p_{t'-}).
\end{eqnarray*}
Since $W$ is uniformly Lipschitz continuous in $x$, we have with (34)
\begin{eqnarray*}
Y^{t,x,\mathbb{P}}_{t'-} \geq W(t',X^{x,t}_{t'},\bold p_{t'-})-c\epsilon.
\end{eqnarray*}
for an arbitrarily small $\epsilon>0$.
\end{proof}
\ \\

\begin{lem}
For any $\epsilon>0$, $ t'\in[t,T]$ and $\mathbb{P}\in\mathcal{P}^{f}(t,p)$ one can choose a $\mathbb{P}^\epsilon\in\mathcal{P}^{f}(t,p)$, such that 
\begin{itemize}
\item [(i)]	${\mathbb{P}^\epsilon}=\mathbb{P}$ on $\mathcal{F}_{t'-}$ 
\item [(ii)]	and it holds
			\begin{eqnarray}
				Y^{t,x,{\mathbb{P}^\epsilon}}_{t'-}\leq W(t',X_{t'}^{t,x},\bold p_{t'-})+\epsilon.
			\end{eqnarray}
\end{itemize}
\end{lem}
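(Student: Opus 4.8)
The plan is to establish the upper bound by a \emph{pasting} argument that mirrors the conditioning used for the lower bound in Lemma 4.8: keep the given $\mathbb{P}$ on $\mathcal{F}_{t'-}$ and, conditionally on the boundary data $(X^{t,x}_{t'},\bold p_{t'-})$ at time $t'$, glue in after $t'$ a near-optimal continuation measure for the problem restarted at that data. Condition (i) will then hold by construction, and (36) will follow from $\epsilon$-optimality of the pieces together with the regularity of $W$.

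Concretely, since $\mathbb{P}\in\mathcal{P}^f(t,p)$ there is a finite set $S=\{p^1,\dots,p^k\}$ with $\bold p_{t'-}\in S$ $\mathbb{P}$-a.s. As in the proof of Lemma 4.8 I would fix a Borel partition $(A_l)_{l\in\mathbb{N}}$ of $\mathbb{R}^d$ with $\textnormal{diam}(A_l)\le\epsilon$ and points $y^l\in A_l$, so that on $\{X^{t,x}_{t'}\in A_l\}$ the restart point is within $\epsilon$ of $y^l$. For every pair $(l,m)$ I would select, using Lemma 4.3, a measure $\mathbb{P}^{l,m}\in\mathcal{P}(t',p^m)$ that is $\epsilon$-optimal for $W(t',y^l,p^m)$, i.e. $Y^{t',y^l,\mathbb{P}^{l,m}}_{t'-}\le W(t',y^l,p^m)+\epsilon$, and set, on $\Omega=\Omega_{0,t'}\times\Omega_{t'}$,
\[
\mathbb{P}^\epsilon=\mathbb{P}\big|_{\mathcal{F}_{t'-}}\otimes\Big(\sum_{l,m}1_{\{X^{t,x}_{t'}\in A_l,\,\bold p_{t'-}=p^m\}}\,\mathbb{P}^{l,m}\big|_{\Omega_{t'}}\Big).
\]
By construction $\mathbb{P}^\epsilon=\mathbb{P}$ on $\mathcal{F}_{t'-}$, giving (i). I would then check $\mathbb{P}^\epsilon\in\mathcal{P}(t,p)$ exactly as in Lemmas 4.3 and 4.8: $B$ is Brownian because it is so before $t'$ under $\mathbb{P}$ and has Brownian increments after $t'$ under each $\mathbb{P}^{l,m}$; and $\bold p$ stays a martingale because on $\{\bold p_{t'-}=p^m\}$ the continuation $\mathbb{P}^{l,m}\in\mathcal{P}(t',p^m)$ gives $\mathbb{E}[\bold p_s\mid\mathcal{F}_{t'-}]=p^m=\bold p_{t'-}$ for $s\ge t'$, so the tower property splices the two martingale pieces; conditions (i) and (iii) of Definition 3.1 are inherited piecewise.

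For (36) I would use the flow property of $X^{t,x}$ to write, on $\{X^{t,x}_{t'}\in A_l,\ \bold p_{t'-}=p^m\}$, that $Y^{t,x,\mathbb{P}^\epsilon}_{t'-}=Y^{t',X^{t,x}_{t'},\mathbb{P}^{l,m}}_{t'-}$, where the right-hand side is the solution of (27) restarted at $X^{t,x}_{t'}$ under $\mathbb{P}^{l,m}$. Replacing the restart point $X^{t,x}_{t'}$ by $y^l$ costs a $c\epsilon$ error, controlled by the Hölder inequality, Proposition A.3 and Gronwall exactly as in the estimate leading to (34); $\epsilon$-optimality then bounds this by $W(t',y^l,p^m)+\epsilon$, and the Lipschitz continuity of $W$ in $x$ (Proposition 4.5) lets me return from $y^l$ to $X^{t,x}_{t'}$. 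Summing the partition term by term yields $Y^{t,x,\mathbb{P}^\epsilon}_{t'-}\le W(t',X^{t,x}_{t'},\bold p_{t'-})+c\epsilon$ $\mathbb{P}^\epsilon$-a.s., which is (36) after renaming $\epsilon$.

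The main obstacle is the membership $\mathbb{P}^\epsilon\in\mathcal{P}^f$, not merely in $\mathcal{P}$: pasting over the countable partition $(A_l)$ a priori allows $\bold p$ to take infinitely many values after $t'$. To keep the support of $\bold p$ finite I would choose the continuation pieces $\mathbb{P}^{l,m}$ with their post-$t'$ values in one common finite $\epsilon$-net $G$ of $\Delta(I)$; the existence of such grid-valued, $\epsilon$-near-optimal measures is the quantitative form of the weak$^*$ density of $\mathcal{P}^f$ in $\mathcal{P}$ invoked for Theorem 4.7, combined with the Lipschitz dependence of $\tilde H$ and $g$ on $p$ (Remark 4.4), which bounds the change of the BSDE value when a $\bold p$-path is projected onto $G$. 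The remaining routine points, namely the measurability of the kernel $(l,m)\mapsto\mathbb{P}^{l,m}$ and the regular-conditional-probability reading of the product over $\Omega=\Omega_{0,t'}\times\Omega_{t'}$, are handled as in the constructions of Lemmas 4.3 and 4.8.
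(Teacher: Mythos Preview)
Your proposal is correct and follows essentially the same pasting construction as the paper: partition in $x$, use the finitely many values of $\bold p_{t'-}$, pick $\epsilon$-optimal continuation measures $\mathbb{P}^{l,m}$ for each cell, glue them after $t'$ onto $\mathbb{P}|_{\mathcal{F}_{t'-}}$, and conclude via the (34)-type estimate and the Lipschitz continuity of $W$. The only difference is that the paper takes the $\mathbb{P}^{l,m}$ directly in $\mathcal{P}^f(t',p^m)$ (using that the infimum defining $W$ is already attained up to $\epsilon$ over $\mathcal{P}^f$ by weak$^*$ density), whereas you pick them in $\mathcal{P}(t',p^m)$ via Lemma 4.3 and then add the common finite $\epsilon$-net argument to force $\mathbb{P}^\epsilon\in\mathcal{P}^f$; your extra step is in fact more careful, since even with each $\mathbb{P}^{l,m}\in\mathcal{P}^f$ the countable union over $l$ does not automatically yield a single finite range for $\bold p$.
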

\begin{rem}
Observe that by (i) it holds
\begin{eqnarray*}
\mathbb{E}_{\mathbb{P}}\left[\int_{t}^{t'}  \tilde H(s,X_s^{t,x},z^{t,x,{\mathbb{P}}}_s,\bold p_s)ds\big|\mathcal{F}_{t-}\right]&{=}&\mathbb{E}_{{\mathbb{P}}^\epsilon}\left[\int_{t}^{t'}  \tilde H(s,X_s^{t,x},z^{t,x,{\mathbb{P}}}_s,\bold p_s)ds\big|\mathcal{F}_{t-}\right],
\end{eqnarray*}
while by (ii) and Lemma 4.8.
\begin{eqnarray*}
Y^{t,x,{\mathbb{P}^\epsilon}}_{t'-}\leq W(t',X_{t'}^{t,x},\bold p_{t'-})+ \epsilon \leq Y^{t,x,{\mathbb{P}}}_{t'-}+\epsilon,
\end{eqnarray*}
hence by comparison (Theorem A.4.)
\begin{eqnarray}
\mathbb{E}_{{\mathbb{P}}^\epsilon}\left[\int_{t}^{t'}  \tilde H(s,X_s^{t,x},z^{t,x,\mathbb{P}^\epsilon}_s,\bold p_s)-\tilde H(s,X_s^{t,x},z^{t,x,{\mathbb{P}}}_s,\bold p_s)ds\big|\mathcal{F}_{t-}\right]&{\leq}&\epsilon.
\end{eqnarray}
\end{rem}

\begin{proof} (Lemma 4.9.)
Fix a $\mathbb{P}\in\mathcal{P}^f(t,p)$. Let $t'\in[t,T]$. By assumption there exist $S=\{p^1,\ldots,p^k\}$, such that $\mathbb{P}[\bold p_{t'-}\in S]=1$. Furthermore let $(A_l)_{l\in\mathbb{N}}$ be a partition of $\mathbb{R}^d$ by Borel sets, such that diam$(A_l)\leq \bar \epsilon$ and choose for any $l\in\mathbb{N}$ some $y^l\in A_l$.\\
Define for any $l,m$ measures $\mathbb{P}^{l,m}\in\mathcal{P}^f(t',p^m)$, such that
\begin{eqnarray*}
&&\mathbb{E}_{\mathbb{P}^{l,m}}\left[\int_{t'}^T  \tilde H(s,X_s^{t',y^l},z^{t',y^l,{\mathbb{P}^{l,m}}}_s,\bold p_s) ds+\langle\bold p_T, g(X_T^{t',y^l})\rangle\big|\mathcal{F}_{t'-}\right]\\
&&\ \ \ \ \leq \inf_{\mathbb{P}\in \mathcal{P}^f(t',p^m)}  \mathbb{E}_{\mathbb{P}}\left[\int_{t'}^T  \tilde H(s,X_s^{t',y^l},z^{t',y^l,\mathbb{P}}_s,\bold p_s) ds+\langle\bold p_T, g(X_T^{t',y^l})\rangle\big|\mathcal{F}_{t'-}\right]+\epsilon\\
&&\ \ \ \ =W(t',y^l,p^m)+\epsilon.
\end{eqnarray*}
We define the probablility measures ${\mathbb{P}}^\epsilon$, such that on $\Omega=\Omega_{0,t'}\times\Omega_{t'}$
\[ \mathbb{P}^\epsilon=(\mathbb{P}|_{\Omega_{0,t'}})\otimes\left(\sum_{m=1}^k\sum_{l=1}^\infty 1_{\{X_{t'}^{t,x}\in A^l,\bold p_{t'-}=p_m\}}\mathbb{P}^{l,m}|_{\Omega_{t'}}\right).
\]
So by definition $(B_s)_{s\in[t,T]}$ is a Brownian motion under $\mathbb{P}^\epsilon$. Also $(\bold p_s)_{s\in[t,T]}$ is a martingale, since for $ t'\leq r\leq s\leq T$
\[
\mathbb{E}_{\mathbb{P}^\epsilon}[\bold p_s|\mathcal{F}_{r}]=\sum_{m=1}^k \sum_{l=1}^\infty 1_{\{X_{t'}^{t,x}\in A^l,\bold p_{t'-}=p_m\}}\mathbb{E}_{\mathbb{P}^{l,m}}[\bold p_s|\mathcal{F}_{r}]=\sum_{m=1}^k \sum_{l=1}^\infty 1_{\{X_{t'}^{t,x}\in A^l,\bold p_{t'-}=p_m\}}\bold p_r=\bold p_r.
\]
Furthermore the remaining conditions of Definition 3.1. are obviously met, hence $\mathbb{P}^\epsilon\in\mathcal{P}^{f}(t,p)$.\\
Note that by the uniform Lipschitz continuity of $\tilde H$ and Proposition A.3.  we have as in (34)
\begin{eqnarray*}
Y^{t,x,{\mathbb{P}^\epsilon}}_{t'-}&=&\sum_{m=1}^k\sum_{l=1}^\infty 1_{\{X_{t'}^{t,x}\in A^l,\bold p_{t'-}=p_m\}} Y^{t,x,{\mathbb{P}^\epsilon}}_{t'-}\\
&\leq&  \sum_{m=1}^k\sum_{l=1}^\infty 1_{\{X_{t'}^{t,x}\in A^l,\bold p_{t'-}=p_m\}}  Y^{t',y^l,{\mathbb{P}^\epsilon}}_{t'-}+ c \sum_{m=1}^k\sum_{l=1}^\infty \mathbb{E}_{\mathbb{P}}\left[  1_{\{X_{t'}^{t,x}\in A^l,\bold p_{t'-}=p_m\}}|{X}^{t,x}_{t'}-y^l|\big|\mathcal{F}_{t'-}\right]\\
&\leq&  \sum_{m=1}^k\sum_{l=1}^\infty 1_{\{X_{t'}^{t,x}\in A^l,\bold p_{t'-}=p_m\}}  Y^{t',y^l,{\mathbb{P}^\epsilon}}_{t'-}+ c \bar \epsilon.
\end{eqnarray*}
So it holds by the definition of $\mathbb{P}^\epsilon$
\begin{eqnarray*}
Y^{t,x,{\mathbb{P}^\epsilon}}_{t'-}
&\leq&   \sum_{m=1}^k\sum_{l=1}^\infty 1_{\{X_{t'}^{t,x}\in A^l,\bold p_{t'-}=p_m\}}  Y^{t',y^l,{\mathbb{P}^{l,m}}}_{t'-} + c \bar \epsilon \\
&\leq&  \sum_{m=1}^k\sum_{l=1}^\infty 1_{\{X_{t'}^{t,x}\in A^l,\bold p_{t'-}=p_m\}} W(t',y^l,p^m)+\epsilon+ c \bar \epsilon\\
&\leq&W(t',X_{t'}^{t,x},\bold p_{t'-})+\epsilon+c\bar \epsilon
\end{eqnarray*}
and the result follows, since $\bar \epsilon$ can be chosen arbitrarily small.
\end{proof}

We are now ready to prove Theorem 4.7.

\begin{proof} (Theorem 4.7.) 
Let ${\mathbb{P}^\epsilon}\in\mathcal{P}^{f}(t,p)$ be ${\epsilon}$-optimal for $W(t,x,p)$. Then by Lemma 4.8.
\begin{eqnarray*}
W(t,x,p)+\epsilon&\geq& \mathbb{E}_{{\mathbb{P}^\epsilon}}\left[\int_{t}^{T}  \tilde H(s,X_s^{t,x},z_s^{t,x},\bold p_s)ds+\langle\bold p_T,g(X^{t,x}_T) \rangle\big|\mathcal{F}_{t-}\right]\\
&=& \mathbb{E}_{{\mathbb{P}^\epsilon}} \left[\int_{t}^{t'}  \tilde H(s,X_s^{t,x},z_s^{t,x},\bold p_s)ds + \int_{t'}^{T}  \tilde H(s,X_s^{t,x},z^{t,x}_s,\bold p_s)ds+\langle\bold p_T,g(X^{t,x}_T) \rangle\big|\mathcal{F}_{t-}\right]\\
&=& \mathbb{E}_{{\mathbb{P}^\epsilon}} \left[\int_{t}^{t'}  \tilde H(s,X_s^{t,x},z_s^{t,x},\bold p_s)ds + Y^{t,x}_{t'-}\big|\mathcal{F}_{t-}\right]\\
&\geq& \mathbb{E}_{{\mathbb{P}^\epsilon}} \left[\int_{t}^{t'}  \tilde H(s,X_s^{t,x},z_s^{t,x},\bold p_s)ds+W(t',X^{t,x}_{t'},\bold p_{t'-})\big|\mathcal{F}_{t-}\right].
\end{eqnarray*}
To prove the reverse inequality choose $\mathbb{P}^{\epsilon_1}\in\mathcal{P}^{f}(t,p)$ to be $\epsilon_1$ optimal for the RHS of (32), i.e.
\begin{equation}
\begin{array}{rcl}
&&\textnormal{essinf}_{\mathbb{P}\in \mathcal{P}^f(t,p)} \mathbb{E}_{\mathbb{P}}\left[\int_{t}^{t'}  \tilde H(s,X_s^{t,x},z^{t,x,\mathbb{P}}_s,\bold p_s)ds+W(t',X_{t'}^{t,x},\bold p_{t'-})\big|\mathcal{F}_{t-}\right]+\epsilon_1\\
\ \\
&&\ \ \ \ \geq \mathbb{E}_{\mathbb{P}^{\epsilon_1}}\left[\int_{t}^{t'}  \tilde H(s,X_s^{t,x},z^{t,x,\mathbb{P}^{\epsilon_1}}_s,\bold p_s)ds+W(t',X_{t'}^{t,x},\bold p_{t'-})\big|\mathcal{F}_{t-}\right]
\end{array}
\end{equation}
Furthermore choose as in Lemma 4.9. for $\mathbb{P}^{\epsilon_1}$ a $\mathbb{P}^{\epsilon_{1,2}}\in\mathcal{P}^{f}(t,p)$ to be $\epsilon_ 2$ optimal. Then by (35), (36)
\begin{equation}
\begin{array}{rcl}
&&\mathbb{E}_{\mathbb{P}^{\epsilon_1}}\left[\int_{t}^{t'}  \tilde H(s,X_s^{t,x},z^{t,x,\mathbb{P}^{\epsilon_1}}_s,\bold p_s)ds+W(t',X_{t'}^{t,x},\bold p_{t'-})\big|\mathcal{F}_{t-}\right]+2\epsilon_2\\
\ \\
&&\ \ \ \geq \mathbb{E}_{\mathbb{P}^{\epsilon_{1,2}}}\left[\int_{t}^{t'}  \tilde H(s,X_s^{t,x},z^{t,x,\mathbb{P}^{\epsilon_{1,2}}}_s,\bold p_s)ds+Y^{t,x,\mathbb{P}^{\epsilon_{1,2}}}_{t'-}\big|\mathcal{F}_{t-}\right]=Y^{t,x,\mathbb{P}^{\epsilon_{1,2}}}_{t-}
\end{array}
\end{equation}
Finally combining (37), (38) we have
\begin{eqnarray*}
&&\textnormal{essinf}_{\mathbb{P}\in \mathcal{P}^f(t,p)} \mathbb{E}_{\mathbb{P}}\left[\int_{t}^{t'}  \tilde H(s,X_s^{t,x},z^{t,x,\mathbb{P}}_s,\bold p_s)ds+W(t',X_{t'}^{t,x},\bold p_{t'})\big|\mathcal{F}_{t-}\right]+\epsilon_1+2\epsilon_2\\
&&\ \ \ \geq Y^{t,x,\mathbb{P}^{\epsilon_{1,2}}}_{t-}\geq W(t,x,p).
\end{eqnarray*}

\end{proof}

\subsection{Viscosity solution property}

To proof that $W$ is a viscosity solution to (8) we first show the subsolution property which is an easy consequence of the Dynamic Programming Theorem 4.7.

\begin{prop}
$W$ is a viscosity subsolution to (8) on $[0,T]\times\mathbb{R}^d\times\textnormal{Int}(\Delta(I)).$
\end{prop}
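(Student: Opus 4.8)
The plan is to establish the two inequalities contained in the subsolution requirement. Fix $\phi\in\mathcal{C}^{1,2,2}$ and a point $(t_0,x_0,p_0)\in[0,T)\times\mathbb{R}^d\times\textnormal{Int}(\Delta(I))$ at which $W-\phi$ has a local maximum, normalised (as usual, by adding a quadratic penalisation) so that $W(t_0,x_0,p_0)=\phi(t_0,x_0,p_0)$ and $W\le\phi$ in a neighbourhood. In the orientation used here (terminal condition at $T$, cf. Remark 2.9), the subsolution property amounts to
\[\min\Big\{\partial_t\phi+\tfrac12\textnormal{tr}(\sigma\sigma^*(t_0,x_0)D_x^2\phi)+H(t_0,x_0,D_x\phi,p_0),\ \lambda_{\min}\big(p_0,\tfrac{\partial^2\phi}{\partial p^2}\big)\Big\}\ge 0 .\]
First I would dispose of the obstacle term: since $p_0$ is interior and $W(t_0,x_0,\cdot)$ is convex (shown above), touching from above forces $\tfrac{\partial^2}{\partial p^2}(\phi-W)\ge 0$ along the tangent directions, whence $\tfrac{\partial^2\phi}{\partial p^2}\ge 0$ there and $\lambda_{\min}(p_0,\tfrac{\partial^2\phi}{\partial p^2})\ge 0$ automatically. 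Everything thus reduces to the Hamilton--Jacobi--Isaacs inequality $A[\phi]:=\partial_t\phi+\tfrac12\textnormal{tr}(\sigma\sigma^* D_x^2\phi)+H(t_0,x_0,D_x\phi,p_0)\ge 0$.

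To obtain this I would argue by contradiction, in the spirit of Peng and of \cite{HaLe}, via the comparison principle. Suppose $A[\phi](t_0,x_0,p_0)<0$. By continuity and the identity $H(\cdot,D_x\phi,p_0)=\tilde H(\cdot,\sigma^* D_x\phi,p_0)$, there is $\eta>0$ such that $\tilde H(s,X^{t_0,x_0}_s,\sigma^* D_x\phi,p_0)+\partial_s\phi+\tfrac12\textnormal{tr}(\sigma\sigma^* D_x^2\phi)\le-\eta$ for $(s,X^{t_0,x_0}_s)$ near $(t_0,x_0)$, which I localise by a stopping time and a small horizon $t'$. Working under a \emph{non-revealing} $\mathbb{P}\in\mathcal{P}^f(t_0,p_0)$ with $\bold p\equiv p_0$ on $[t_0,t']$, the belief is frozen, so the BSDE on $[t_0,t']$ is jump-free and Theorem A.4 applies as in the Brownian case. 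By It\^o's formula the process $\psi_s:=\phi(s,X^{t_0,x_0}_s,p_0)$ solves a BSDE on $[t_0,t']$ with terminal value $\phi(t',X^{t_0,x_0}_{t'},p_0)$ and control $\sigma^* D_x\phi$; the displayed sign says precisely that the driver of $\psi$ strictly dominates $\tilde H(\cdot,\cdot,p_0)$ evaluated at this same control. Hence, letting $\bar Y$ be the BSDE on $[t_0,t']$ with the same terminal value $\phi(t',X^{t_0,x_0}_{t'},p_0)$ and driver $\tilde H(\cdot,\cdot,p_0)$, the comparison principle yields $\bar Y_{t_0}\le\psi_{t_0}=\phi(t_0,x_0,p_0)=W(t_0,x_0,p_0)$, and strictly so because of the uniform gap $\eta$.

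The contradiction comes from the reverse bound $W(t_0,x_0,p_0)\le\bar Y_{t_0}$, which I would extract from the Dynamic Programming Principle. Starting from the same non-revealing $\mathbb{P}$ and applying Lemma 4.9 on $[t_0,t']$, I obtain $\mathbb{P}^\epsilon\in\mathcal{P}^f(t_0,p_0)$, still frozen on $[t_0,t']$ and agreeing with $\mathbb{P}$ on $\mathcal{F}_{t'-}$, with $Y^{t_0,x_0,\mathbb{P}^\epsilon}_{t'-}\le W(t',X^{t_0,x_0}_{t'},p_0)+\epsilon$. Since $W(t_0,x_0,p_0)\le Y^{t_0,x_0,\mathbb{P}^\epsilon}_{t_0-}$ by definition of the essential infimum, and since on $[t_0,t']$ the process $Y^{t_0,x_0,\mathbb{P}^\epsilon}$ is the (jump-free) BSDE with driver $\tilde H(\cdot,\cdot,p_0)$ and terminal datum dominated by $W(t',X^{t_0,x_0}_{t'},p_0)+\epsilon$, comparison bounds $Y^{t_0,x_0,\mathbb{P}^\epsilon}_{t_0-}$ by the value at $t_0$ of the BSDE carrying terminal $W(t',X^{t_0,x_0}_{t'},p_0)+\epsilon$. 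Letting $\epsilon\downarrow 0$ gives $W(t_0,x_0,p_0)\le\bar Y^{W}_{t_0}$, where $\bar Y^{W}$ has terminal $W(t',X^{t_0,x_0}_{t'},p_0)$; finally $W(t',\cdot,p_0)\le\phi(t',\cdot,p_0)$ and one further comparison give $\bar Y^{W}_{t_0}\le\bar Y_{t_0}$, so that $W(t_0,x_0,p_0)\le\bar Y_{t_0}$, contradicting the strict inequality above.

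I expect the genuinely delicate point to be exactly the interplay between the $z$-component of the BSDE and $D_x\phi$: these cannot be matched pointwise, and it is the combination of the frozen-belief (non-revealing) measure, the gluing Lemma 4.9, and two applications of the comparison principle (Theorem A.4) that circumvents the identification. The remaining ingredients -- the continuity and localisation estimates keeping $X^{t_0,x_0}$ in the neighbourhood, the stability of the BSDE solution in its terminal datum as $\epsilon\downarrow0$, and the passage $t'\downarrow t_0$ -- are routine.
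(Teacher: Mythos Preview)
Your proposal is correct and rests on the same key idea as the paper: handle the obstacle term by convexity of $W$ in $p$, and for the HJI part freeze the belief by choosing the non-revealing measure $\mathbb{P}$ under which $\bold p\equiv p_0$, thereby reducing to the classical Brownian BSDE/viscosity setting. The difference is one of presentation. The paper's argument is very short: from the DPP upper bound with that constant-$\bold p$ measure and $W\le\phi$ one gets
\[
\phi(\bar t,\bar x,\bar p)\le\mathbb{E}\Big[\int_{\bar t}^{t}\tilde H(s,X_s^{\bar t,\bar x},z_s^{\bar t,\bar x},\bar p)\,ds+\phi(t,X_t^{\bar t,\bar x},\bar p)\Big],
\]
and the passage $t\downarrow\bar t$ is then deferred to the standard BSDE--viscosity argument of \cite{ElK}. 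You instead unpack that last step explicitly via a contradiction argument: It\^o on $\phi$, two applications of the comparison Theorem~A.4, and Lemma~4.9 to pin down the terminal datum. Your route is more self-contained and makes visible exactly where the $z$-versus-$D_x\phi$ mismatch is absorbed (namely in the comparison with $Y^2=\psi$, where the driver inequality is evaluated at $z^\psi=\sigma^*D_x\phi$). The paper's route is terser but leans on the literature. One minor remark: your appeal to Lemma~4.9 is correct but heavier than what the subsolution side actually needs---the paper gets by with the bare essinf upper bound, without constructing an $\epsilon$-optimal gluing.
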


\begin{proof}
Let $\phi:[0,T]\times\mathbb{R}^d\times\Delta(I)\rightarrow \mathbb{R}$ be a test function such that $W-\phi$ has a strict global maximum at $(\bar{t},\bar x,\bar p)$ with $W(\bar{t},\bar x,\bar p)-\phi(\bar{t},\bar x,\bar p)=0$ and $\bar p\in\textnormal{Int}(\Delta (I))$.
We have to show, that
\begin{eqnarray}
	&&\min\bigg\{\frac{\partial \phi}{\partial t}+\frac{1}{2}\textnormal{tr}(\sigma\sigma^*(t,x)D_x^2\phi)+H(t,x,D_x\phi,p), \lambda_{\min} \left(\frac{\partial^2 \phi}{\partial p^2}\right)\bigg\}\geq 0
\end{eqnarray}
holds at $(\bar{t},\bar x,\bar p)$.\\
By Proposition 4.6. $W$ is convex in $p$. So since $\bar p\in\textnormal{Int}(\Delta (I))$, it holds $\lambda_{\min}\left(\frac{\partial ^2 \phi}{\partial p^2}(\bar{t},\bar x,\bar p)\right)\geq 0$. Furthermore
\begin{eqnarray*}
&&\phi(\bar{t},\bar x,\bar p)=W(\bar t,\bar x,\bar p)=\textnormal{essinf}_{\mathbb{P}\in \mathcal{P}^r(\bar t,\bar p)} \mathbb{E}\left[\int_{\bar t}^{t} \tilde H(s,X_s^{\bar{t},\bar x},z^{\bar{t},\bar x}_s,\bold p_s)ds+W(t,X_{t}^{\bar{t},\bar x},\bold p_{t-})\big|\mathcal{F}_{\bar t-}\right]\\
&&\ \ \ \ \ \ \ \ \ \ \ \  \ \leq \mathbb{E}\left[\int_{\bar t}^{t} \tilde H(s,X_s^{\bar{t},\bar x},z_s^{\bar{t},\bar x},\bar p)ds+W(t,X_{t}^{\bar{t},\bar x},\bar p)\big|\mathcal{F}_{\bar t-}\right].
\end{eqnarray*}
Since by standard Markov arguments  $\mathbb{E}\left[\int_{\bar t}^{t} \tilde H(s,X_s^{\bar{t},\bar x},z_s^{\bar{t},\bar x},\bar p)ds+W(t,X_{t}^{\bar{t},\bar x},\bar p)\big|\mathcal{F}_{\bar t-}\right]$ is deterministic and $W\leq\phi$ by construction, this yields
\begin{eqnarray*}
\phi(\bar{t},\bar x,\bar p)\leq \mathbb{E}\left[\int_{\bar t}^{t} \tilde H(s,X_s^{\bar{t},\bar x},z^{\bar{t},\bar x}_s,\bar p)ds+\phi(t,X_{t}^{\bar{t},\bar x},\bar p)\right],
\end{eqnarray*}
which implies (39) as $t\downarrow\bar t$ by standard results (see e.g. \cite{ElK}). 
\end{proof}

\begin{prop}
$W$ is a viscosity supersolution to (8) on $[0,T]\times\mathbb{R}^d\times\Delta(I)$.
\end{prop}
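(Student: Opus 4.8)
The plan is to verify the supersolution inequality at a point $(\bar t,\bar x,\bar p)$ where $W-\phi$ has a strict global minimum with $W(\bar t,\bar x,\bar p)=\phi(\bar t,\bar x,\bar p)$, $\phi$ a smooth test function. Writing $F(\phi):=\frac{\partial\phi}{\partial t}+\frac12\textnormal{tr}(\sigma\sigma^*(t,x)D^2_x\phi)+H(t,x,D_x\phi,p)$, I must show $\min\{F(\phi),\lambda_{\min}(p,\frac{\partial^2\phi}{\partial p^2})\}\le 0$ at $(\bar t,\bar x,\bar p)$. First I would split into two cases. If $\lambda_{\min}(\bar p,\frac{\partial^2\phi}{\partial p^2}(\bar t,\bar x,\bar p))\le 0$ there is nothing to prove, since the minimum is then automatically $\le 0$. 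The whole content lies in the complementary case $\lambda_{\min}(\bar p,\frac{\partial^2\phi}{\partial p^2})\ge 0$, where $\phi$ is convex in $p$ along the directions tangent to $\Delta(I)$ at $\bar p$ and I must produce the Hamilton--Jacobi inequality $F(\phi)\le 0$.

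In that case I would argue by contradiction, assuming $F(\phi)(\bar t,\bar x,\bar p)\ge 2\delta>0$. By continuity there is a neighbourhood $\mathcal N$ of $(\bar t,\bar x,\bar p)$ on which $F(\phi)\ge\delta$ and on which the convexity of $\phi$ in $p$ persists. Using the convexity of $W$ in $p$ (Proposition 4.6) together with the tangency at $\bar p$, I would first reduce to a test function that is convex in $p$ on all of $\Delta(I)$; this is the device that makes the second–order $p$–contribution nonnegative for \emph{every} excursion of $\mathbf p$, not merely for small ones. For a horizon $t'=\bar t+h$ I then invoke the dynamic programming principle (Theorem 4.7) through an $\epsilon$–optimal measure $\mathbb P^\epsilon\in\mathcal P^f(\bar t,\bar p)$, so that $Y^{\bar t,\bar x,\mathbb P^\epsilon}_{\bar t-}\le W(\bar t,\bar x,\bar p)+\epsilon$.

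The core computation is Itô's formula applied to $\bar Y_s:=\phi(s,X^{\bar t,\bar x}_s,\mathbf p_s)$ on $[\bar t,t']$. Since $X$ is continuous and, by Definition 3.1(iii), $B$ and $\mathbf p^c$ are strongly orthogonal, the expansion of $\bar Y$ carries diffusion coefficient $\sigma^*D_x\phi$ against $dB$, a martingale part coming from $\mathbf p$, and a finite–variation term $A^p$ collecting $\frac12\textnormal{tr}(\frac{\partial^2\phi}{\partial p^2}\,d\langle\mathbf p^c\rangle)$ together with the jump expressions $\phi(\cdot,\mathbf p_s)-\phi(\cdot,\mathbf p_{s-})-\langle\frac{\partial}{\partial p}\phi(\cdot,\mathbf p_{s-}),\mathbf p_s-\mathbf p_{s-}\rangle$; convexity of $\phi$ in $p$ makes $A^p$ nondecreasing. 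Thus $\bar Y$ is a subsolution of a BSDE of type (27): its drift equals $\tilde H(s,X_s,\sigma^*D_x\phi,\mathbf p_s)-F(\phi)-dA^p/ds$, which on $\mathcal N$ is $\le\tilde H(s,X_s,\sigma^*D_x\phi,\mathbf p_s)-\delta$. Applying the comparison principle (Theorem A.4) on $[\bar t,t']$ against the solution of (27) with common terminal value $\phi(t',X_{t'},\mathbf p_{t'-})$ produces a strict gap of order $\delta h$. Chaining the elementary comparison $\phi(t',\cdot)\le W(t',\cdot)$, Lemma 4.8 (which gives $Y^{\bar t,\bar x,\mathbb P^\epsilon}_{t'-}\ge W(t',X_{t'},\mathbf p_{t'-})$) and the $\epsilon$–optimality, I obtain $W(\bar t,\bar x,\bar p)=\bar Y_{\bar t-}\le W(\bar t,\bar x,\bar p)+\epsilon-c\delta h$; choosing $\epsilon=\frac12 c\delta h$ and letting $h\downarrow 0$ yields the contradiction.

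The main obstacle, and the step demanding the most care, is the localisation in the presence of jumps of $\mathbf p$: the pointwise inequalities $F(\phi)\ge\delta$ and the sign of the second–order $p$–terms hold only on $\mathcal N$, whereas $\mathbf p$ can leave any neighbourhood of $\bar p$ in a single jump. This is exactly where the reduction to a globally $p$–convex test function (licensed by Proposition 4.6) is indispensable, since it forces $A^p$ nondecreasing for arbitrary jumps; the residual localisation in $(t,x)$ is harmless because $X$ is continuous, so stopping at the exit time from the $(t,x)$–part of $\mathcal N$ and using boundedness of the data controls an event of probability $o(1)$ as $h\downarrow 0$. One must also keep track that the $z$–variable in the dynamic programming integral is literally the integrand of (27), so that the comparison with $\bar Y$ is carried out at the matching value $z=\sigma^*D_x\phi$, and that the value/left–limit distinction of $\mathbf p$ at the fixed time $t'$ is immaterial.
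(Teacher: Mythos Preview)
Your overall architecture is right, and several of the ingredients you list (global $p$-convexification of the test function, It\^o on $\phi(s,X_s,\mathbf p_s)$, comparison Theorem A.4, Lemma 4.8) are exactly the ones the paper uses. However, there is a genuine gap in your localisation step, and it is precisely the issue the paper spends most of its effort on.

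You correctly observe that the second-order $p$-contribution $A^p$ (continuous quadratic variation plus jump remainder) is globally nonnegative once $\phi$ is $p$-convex on all of $\Delta(I)$. But the \emph{drift} $F(\phi)(s,X_s,\mathbf p_s)$ also depends on $\mathbf p_s$, both through $H(s,X_s,D_x\phi,\mathbf p_s)$ and through the $p$-dependence of $\partial_t\phi$, $D_x\phi$, $D_x^2\phi$. Your inequality $F(\phi)\ge\delta$ is only available on the full neighbourhood $\mathcal N$, which includes a constraint on $p$; global $p$-convexity of $\phi$ does nothing to enforce $F(\phi)\ge\delta$ (or even $\ge 0$) once $\mathbf p_s$ leaves the $p$-part of $\mathcal N$. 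Stopping at the exit of $X$ from the $(t,x)$-part of $\mathcal N$ does not help: under an $\epsilon$-optimal $\mathbb P^\epsilon$ the martingale $\mathbf p$ may jump out of any $p$-neighbourhood at time $\bar t$, so on a set of nonnegligible probability your driver comparison $\tilde H-F(\phi)-dA^p/ds\le\tilde H-\delta$ simply fails on the whole interval, and the claimed $\delta h$ gap evaporates.

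The paper closes exactly this gap by a preliminary quantitative step (its Steps~1--2): from the \emph{strict} convexity (not mere convexity) of $\phi$ in $p$ it derives the sharpened lower bound $W(t,x,p)\ge\phi(t,x,\bar p)+\langle\partial_p\phi,p-\bar p\rangle+2\delta|p-\bar p|^2$ for $(t,x)$ near $(\bar t,\bar x)$ and \emph{all} $p\in\Delta(I)$, and then feeds this into the dynamic programming inequality to obtain the a~priori estimates $\mathbb E_{\mathbb P^\epsilon}[|\mathbf p_{t-}-\bar p|^2\mid\mathcal F_{\bar t-}]\le c(t-\bar t)^{1/2}$ and $\mathbb E_{\mathbb P^\epsilon}[\int_{\bar t}^t|\mathbf p_s-\bar p|\,ds\mid\mathcal F_{\bar t-}]\le c(t-\bar t)^{5/4}$. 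Only with these bounds in hand can one control the $p$-dependent error in $F(\phi)$ (or, in the paper's organisation, in the auxiliary driver $\bar f_s$) as $o(t-\bar t)$ and push the comparison through. Your proposal needs an analogous estimate; without it the contradiction argument does not close.
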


\begin{proof}
Let $\phi:[0,T]\times\mathbb{R}^d\times\Delta(I)\rightarrow \mathbb{R}$ be a smooth test function, such that $W-\phi$ has a strict global minimum at $(\bar{t},\bar x,\bar p)$ with $W(\bar{t},\bar x,\bar p)-\phi(\bar{t},\bar x,\bar p)=0$ and such that its derivatives are uniformly Lipschitz in $p$.\\
We have to show, that
\begin{eqnarray}
	&&\min\bigg\{\frac{\partial \phi}{\partial t}+\frac{1}{2}\textnormal{tr}(\sigma\sigma^*(t,x)D_x^2\phi)+H(t,x,D\phi,p), \lambda_{\min} \left(\frac{\partial^2 \phi}{\partial p^2}\right)\bigg\}\leq 0
\end{eqnarray}
holds at $(\bar{t},\bar x,\bar p)$. Observe that, if $\lambda_{\min} \left(\frac{\partial^2 \phi}{\partial p^2}\right)\leq0$ at $(\bar{t},\bar x,\bar p)$, then (40) follows immediately.\\
We assume in the subsequent steps strict convexity of $\phi$ in $p$ at $(\bar{t},\bar x,\bar p)$, i.e. there exist $\delta,\eta>0$ such that for all $z\in T_{\Delta(I)(\bar p)}$
\begin{eqnarray}
\langle \frac{\partial^2 \phi}{\partial p^2}(t,x,p)z,z\rangle>4\delta|z|^2\ \ \ \ \ \ \forall (t,x,p)\in B_\eta(\bar t,\bar x,\bar p).
\end{eqnarray}
Since $\phi$ is a test function for a purely local viscosity notion, one can modify it outside a neighborhood of $(\bar t,\bar x,\bar p)$ such that for all $(s, x)\in [\bar t,T]\times\mathbb{R}^d$ the function $\phi(s,x,\cdot)$ is convex on the whole convex domain $\Delta(I)$. Thus for any $p\in\Delta(I)$ it holds
\begin{eqnarray}
W(t,x,p)\geq \phi(t,x,p)\geq \phi(t,x,\bar p)+\langle\frac{\partial \phi }{\partial p}(t,x,p),p-\bar p\rangle.
\end{eqnarray}
We divide the proof in several steps. First we show an estimate which is stronger than (42) basing on the strict convexity assumption (41). In the second step we use the dynamic programming to establish estimates for $\bold p$. The subsequent steps are rather close to the standard case. We reduce the problem by considering a BSDE on a smaller time interval. Then we establish estimates for the auxiliary BSDE, which we use in the last step to show the viscosity supersolution property.\\

\textbf{Step 1}: 
We claim that there exist $\eta,\delta>0$, such that for all $(t,x)\in B_\eta(\bar t,\bar x)$, $p\in\Delta(I)$
\begin{eqnarray}
W(t,x,p)\geq \phi(t,x,\bar p)+\langle\frac{\partial \phi }{\partial p}(t,x,p),p-\bar p\rangle+2\delta|p-\bar p|^2.
\end{eqnarray}
By Taylor expansion in $p$ we have for all $(t,x,p)\in B_\eta(\bar t,\bar x,\bar p)$
\begin{eqnarray}
W(t,x,p)\geq \phi(t,x,p)\geq \phi(t,x,\bar p)+\langle\frac{\partial \phi }{\partial p}(t,x,p),p-\bar p\rangle+2\delta|p-\bar p|^2.
\end{eqnarray}
To establish (44) for all $p\in\Delta(I)$ we set for $p\in\Delta(I)\setminus \textnormal{Int}(B_\eta(\bar p))$
\[ \tilde p = \bar p+\frac{p-\bar p}{|p-\bar p|}\eta.\]
By the convexity of $W$ in $p$ and (44) we have for any $\hat{ p}\in\partial {W}_p^-(\bar t,\bar x, \tilde p)$
\begin{eqnarray*}
W(\bar t,\bar x,p) &\geq& W(\bar t,\bar x,\tilde p)+\langle \hat p,p-\tilde p \rangle\\
&\geq& \phi(\bar t,\bar x,\bar p)+\langle\frac{\partial \phi}{\partial p}(\bar t,\bar x,\bar p),\tilde p-\bar p\rangle+2\delta\eta^2+\langle \hat p,p-\tilde p \rangle\\
&\geq& \phi(\bar t,\bar x,\bar p)+\langle\frac{\partial \phi}{\partial p}(\bar t,\bar x,\bar p), p-\bar p\rangle+2\delta\eta^2+\langle \hat p-\frac{\partial \phi }{\partial p}(\bar t,\bar x,\bar p),p-\tilde p \rangle.
\end{eqnarray*}
Since $\frac{\partial \phi}{\partial p}(\bar t,\bar x,\bar p)\in\partial{W}_p^-(\bar t,\bar x, \bar p)$ and $p-\tilde p=c (p-\bar p)$ $(c>0)$ and $W$ is convex in $p$, it holds
\[\langle \hat p-\frac{\partial \phi}{\partial p}(\bar t,\bar x,\bar p),p-\tilde p \rangle\geq0.\]
So we have for all $p\in\Delta(I)\setminus \textnormal{Int}(B_\eta(\bar p))$
\begin{eqnarray}
W(\bar t,\bar x,p) \geq \phi(\bar t,\bar x,\bar p)+\langle\frac{\partial \phi}{\partial p}(\bar t,\bar x,\bar p), p-\bar p\rangle+2\delta\eta^2.
\end{eqnarray}
Assume now that (43) does not hold for a $p\in\Delta(I)$. Then there exists a sequence $(t_k,x_{k},p_{k})\rightarrow(\bar t, \bar x, p)$ with ${p_{k}}\in\Delta(I)\setminus B_\eta(\bar p)$, such that
\begin{eqnarray*}
 W(t_k,x_{k},p_{k})< \phi(t_{k},x_{k},{p}_{k})+\langle\frac{\partial \phi}{\partial p}(t_{k},x_{k},p_{k}),p_{k}-\bar p\rangle+\delta|p_{k}-\bar p|^2.
\end{eqnarray*}
Thus for $k\rightarrow\infty$, $p\in\Delta(I)\setminus \textnormal{Int}(B_\eta(\bar p))$ and 
\begin{eqnarray*}
W(\bar t,\bar x,p)< \phi(\bar t,\bar x,\bar p)+\langle\frac{\partial \phi }{\partial p}(\bar t,\bar x,\bar p), p-\bar p\rangle+\delta\eta^2
\end{eqnarray*}
which contradicts (45).\\
Note that by (43) we have for any  $t>\bar t$ such that $(t-\bar t)$ is sufficiently small and an $\eta'<\eta$
\begin{eqnarray*}
W(t,X^{\bar t,\bar x}_{t},\bold p_{t-})&=&1_{\{|X^{\bar t,\bar x}_{t}-\bar x|<\eta'\}} W(t,X^{\bar t,\bar x}_{t},\bold p_{t-})+1_{\{|X^{\bar t,\bar x}_{t}-\bar x|\geq\eta'\}}  W(t,X^{\bar t,\bar x}_{t},\bold p_{t-})\\
&\geq&1_{\{|X^{\bar t,\bar x}_{t}-\bar x|<\eta'\}}  \left(\phi(t,X^{\bar t,\bar x}_{t},\bar p)+\langle\frac{\partial \phi }{\partial p}(t,X^{\bar t,\bar x}_{t},\bar p),\bold p_{t-}-\bar p\rangle+\delta|\bold p_{t-}-\bar p|^2\right)\\
&&\ \ \  +1_{\{|X^{\bar t,\bar x}_{t}-\bar x|\geq\eta'\}} \phi(t,X^{\bar t,\bar x}_{t},\bold p_{t-})\\
&\geq& \phi(t,X^{\bar t,\bar x}_{t},\bar p)+\langle\frac{\partial \phi }{\partial p}(t,X^{\bar t,\bar x}_{t},\bar p),\bold p_{t-}-\bar p\rangle+1_{\{|X^{\bar t,\bar x}_{t}-\bar x|<\eta'\}} \delta|\bold p_{t-}-\bar p|^2\\
&&\ \ \  +1_{\{|X^{\bar t,\bar x}_{t}-\bar x|\geq\eta'\}}  \left(\phi(t,X^{\bar t,\bar x}_{t}, \bold p_{t-})-\phi(t,X^{\bar t,\bar x}_{t},\bar p)-\langle\frac{\partial \phi }{\partial p}(t,X^{\bar t,\bar x}_{t},\bar p),\bold p_{t-}-\bar p\rangle\right)
\end{eqnarray*}
Recalling that $\phi$ is convex with respect to $p$, we get
\begin{equation}
\begin{array}{rcl}
W(t,X^{\bar t,\bar x}_{t},\bold p_{t-})\geq \phi(t,X^{\bar t,\bar x}_{t},\bar p)+\langle\frac{\partial \phi }{\partial p}(t,X^{\bar t,\bar x}_{t},\bar p),\bold p_{t-}-\bar p\rangle
+\delta 1_{\{|X^{\bar t,\bar x}_{t}-\bar x|<\eta'\}}|\bold p_{t-}-\bar p|^2.
\end{array}
\end{equation}

\textbf{Step 2}: 
Next we establish with the help of (46) an estimate for $\bold {p}$. By Theorem 4.7. we can choose for any $\epsilon>0$, $t>\bar t$ a $\mathbb{P}^{\epsilon}\in\mathcal{P}^f(\bar t,\bar p)$ such that we have
\begin{eqnarray}
\epsilon(t-\bar t)\geq \mathbb{E}_{\mathbb{P}^\epsilon}\left[\int_{\bar t}^{t} \tilde H(s,X^{\bar t,\bar x}_{s},z^{\bar t,\bar x}_s,\bold p_s)ds+W(t,X^{\bar t,\bar x}_{t},\bold p_{t-})-W(\bar t,\bar x,\bar p)\big|\mathcal{F}_{\bar t-}\right].
\end{eqnarray}
Hence by (46) it holds for all $t>\bar t$, such that $(t-\bar t)$ is sufficiently small,
\begin{equation}
\begin{array}{rcl}
\epsilon(t-\bar t)&\geq& \mathbb{E}_{\mathbb{P}^\epsilon}\bigg[\int_{\bar t}^{t} \tilde H(s,X^{\bar t,\bar x}_{s},z_s^{\bar t,\bar x},\bold p_s)ds+\phi(t,X^{\bar t,\bar x}_{t},\bar p)-\phi(\bar t,\bar x,\bar p)\\
&&\ \ \ \ \ \ \ \ \ +\langle\frac{\partial \phi }{\partial p}(t,X^{\bar t,\bar x}_{t},\bar p),\bold p_{t-}-\bar p\rangle+\delta  1_{\{|X^{\bar t,\bar x}_{t}-\bar x|<\eta'\}}|\bold p_{t-}-\bar p|^2\big|\mathcal{F}_{\bar t-}\bigg].
\end{array}
\end{equation}
With the estimate (28) we have for a generic constant $c$
\begin{eqnarray}
\left|\mathbb{E}_{\mathbb{P}^\epsilon}\left[\int_{\bar t}^{t} \tilde H(s,X^{\bar t,\bar x}_{s},z^{\bar t,\bar x}_s,\bold p_s)ds\big|\mathcal{F}_{\bar t-}\right]\right|
\leq c \mathbb{E}_{\mathbb{P}^\epsilon}\left[\int_{\bar t}^{t} (1+|z_s^{\bar t,\bar x}|)ds\big|\mathcal{F}_{\bar t-}\right] \leq  c(t-\bar t)^\frac{1}{2},
\end{eqnarray}
since by Proposition A.3.
\begin{eqnarray*}
\mathbb{E}_{\mathbb{P}^\epsilon}\left[\int_{\bar t}^{t} |z_s^{\bar t,\bar x}|^2ds\big|\mathcal{F}_{\bar t-}\right]\leq c \mathbb{E}_{\mathbb{P}^\epsilon}\left[\int_{\bar t}^{t} |X_s^{\bar t,\bar x}|^2ds\big|\mathcal{F}_{\bar t-}\right]\leq c.
\end{eqnarray*}
Furthermore by It\^o's formula it holds 
\begin{eqnarray}
&&\left|\mathbb{E}_{\mathbb{P}^\epsilon}\left[\phi(t,X^{\bar t,\bar x}_{t},\bar p)-\phi(\bar t,\bar x,\bar p)\big|\mathcal{F}_{\bar t-}\right]\right|\leq c(t-\bar t).
\end{eqnarray}
Next, let $f:[\bar t,t]\times\mathbb{R}^n\rightarrow \mathbb{R}^n$ be a smooth bounded function, with bounded derivatives.  Recall that by assumption $\langle B,\bold p^c\rangle=0$ under any ${\mathbb{P}}\in\mathcal{P}(\bar t, \bar p)$. So since under ${\mathbb{P}^\epsilon}$ the process $\bold p$ is a martingale with $\mathbb{E}_{\mathbb{P}^\epsilon}\left[\bold p_{t-}|\mathcal{F}_{\bar t-}\right]=\bar p$, it holds by It\^o's formula
\begin{eqnarray*}
&&\mathbb{E}_{\mathbb{P}^\epsilon}\left[f_i(t,X^{\bar t,\bar x}_{t})(\bold p_{t-}-\bar p)_i\big|\mathcal{F}_{\bar t-}\right]\\
&&\ \ =\mathbb{E}_{\mathbb{P}^\epsilon}\left[\int_{\bar t}^tf_i(s,X^{\bar t,\bar x}_{s})d(\bold p_s)_i+\int_{\bar t}^t(\bold p_s-\bar p)_idf_i(s,X^{\bar t,\bar x}_{s})+\left[f_i(\cdot,X^{\bar t,\bar x}_{\cdot}),(\bold p_\cdot-\bar p)_i\right]_{t-}\big|\mathcal{F}_{\bar t-}\right]\\
&&\ \ =\mathbb{E}_{\mathbb{P}^\epsilon}\bigg[\int_{\bar t}^t \bigg(\frac{\partial}{\partial t}f_i(s,X^{\bar t,\bar x}_{s})+\langle D_x f_i(s,X^{\bar t,\bar x}_{s}),b(X^{\bar t,\bar x}_{s})\rangle\\
&&\ \ \ \ \ \ \ \ \ \ \ \ \ \ \ \ \ \ +\frac{1}{2}\textnormal{tr}(\sigma\sigma^*(s,X^{\bar t,\bar x}_{s})D^2_xf_i(s,X^{\bar t,\bar x}_{s}))\bigg)( \bold p_s-\bar p)_i ds\big|\mathcal{F}_{\bar t-}\bigg].
\end{eqnarray*}
Hence by (H)
\begin{eqnarray}
&&\left|\mathbb{E}_{\mathbb{P}^\epsilon}\left[\langle\frac{\partial \phi }{\partial p}(t,X^{\bar t,\bar x}_{t},\bar p),\bold p_{t-}-\bar p\rangle\big|\mathcal{F}_{\bar t-}\right]\right| \leq c \mathbb{E}_{\mathbb{P}^\epsilon}\left[\int_{\bar t}^t |\bold p_s-\bar p| ds\big|\mathcal{F}_{\bar t-}\right]\leq c(t-\bar t).
\end{eqnarray}
Furthermore observe that, since $|\bold p_{t-}-\bar p|\leq1$, it holds for $\epsilon'>0$ by Young and H\"older inequality 
\begin{eqnarray*}
\mathbb{E}_{\mathbb{P}^\epsilon}\left[1_{\{|X^{\bar t,\bar x}_{t}-\bar x|<\eta'\}}|\bold p_{t-}-\bar p|^2\big|\mathcal{F}_{\bar t-}\right]&=&\mathbb{E}_{\mathbb{P}^\epsilon}\left[(1-1_{\{|X^{\bar t,\bar x}_{t}-\bar x|\geq\eta'\}})|\bold p_{t-}-\bar p|^2\big|\mathcal{F}_{\bar t-}\right]\\
&\geq& \mathbb{E}_{\mathbb{P}^\epsilon}\left[|\bold p_{t-}-\bar p|^2\big|\mathcal{F}_{\bar t-}\right]-\frac{1}{\eta'} \mathbb{E}_{\mathbb{P}^\epsilon}\left[|X^{\bar t,\bar x}_{t}-\bar x| |\bold p_{t-}-\bar p|^2\big|\mathcal{F}_{\bar t-}\right]\\
&\geq&  \mathbb{E}_{\mathbb{P}^\epsilon}\left[|\bold p_{t-}-\bar p|^2\big|\mathcal{F}_{\bar t-}\right]-\frac{1}{\eta'}  \mathbb{E}_{\mathbb{P}^\epsilon}\left[|X^{\bar t,\bar x}_{t}-\bar x| |\bold p_{t-}-\bar p|\big|\mathcal{F}_{\bar t-}\right]\nonumber\\
&\geq&(1- \frac{\epsilon' }{\eta'} )\mathbb{E}_{\mathbb{P}^\epsilon}\left[|\bold p_{t-}-\bar p|^2\big|\mathcal{F}_{\bar t-}\right]-\frac{1}{4\eta'\epsilon'} \mathbb{E}_{\mathbb{P}^\epsilon}\left[|X^{\bar t,\bar x}_{t}-\bar x|^2\big|\mathcal{F}_{\bar t-}\right],
\end{eqnarray*}
hence 
\begin{eqnarray}
\mathbb{E}_{\mathbb{P}^\epsilon}\left[1_{\{|X^{\bar t,\bar x}_{t}-\bar x|<\eta'\}}|\bold p_{t-}-\bar p|^2\big|\mathcal{F}_{\bar t-}\right]&\geq& (1- \frac{\epsilon' }{\eta'} )\mathbb{E}_{\mathbb{P}^\epsilon}\left[|\bold p_{t-}-\bar p|^2\big|\mathcal{F}_{\bar t-}\right]-\frac{1}{4\eta'\epsilon'} (t-\bar t).
\end{eqnarray}
Choosing $0<\epsilon'<\eta'$ and combining (48) with the estimates (49)-(52) there exists a constant $c$, such that
\begin{eqnarray}
\mathbb{E}_{\mathbb{P}^\epsilon}\left[|\bold p_{t-}-\bar p|^2\big|\mathcal{F}_{\bar t-}\right]\leq c (t-\bar t)^\frac{1}{2}.
\end{eqnarray}
Since $\bold p$ is a martingale, it holds for all $s\in[\bar t,t[$
\begin{eqnarray*}
\mathbb{E}_{\mathbb{P}^\epsilon}\left[|\bold p_s-\bar p|^2\big|\mathcal{F}_{\bar t-}\right]\leq c(t-\bar t)^\frac{1}{2},
\end{eqnarray*}
hence
\begin{eqnarray}
\mathbb{E}_{\mathbb{P}^\epsilon}\left[\int_{\bar t}^{t} |\bold p_s-\bar p|ds\big|\mathcal{F}_{\bar t-}\right]
	\leq (t-\bar t)^\frac{1}{2} \mathbb{E}_{\mathbb{P}^\epsilon}\left[\int_{\bar t}^{t} |\bold p_s-\bar p|^2ds\big|\mathcal{F}_{\bar t-}\right]^\frac{1}{2} \leq  c(t-\bar t)^\frac{5}{4}.
\end{eqnarray}

\textbf{Step 3}: Note that under $\mathbb{P}^\epsilon\in\mathcal{P}^f(\bar t,\bar p)$ the triplet $(Y^{\bar t,\bar x}_s,z^{\bar t,\bar x}_s,N_s)_{s\in[\bar t,T]}$ is given by the unique solution to the BSDE 
\begin{eqnarray*}
Y_s^{\bar t,\bar x}=\langle\bold p_T, g(X_T^{\bar t,\bar x})\rangle+\int_s^T \tilde H(r,X_r^{\bar t,\bar x},z^{\bar t,\bar x}_r,\bold{p}_r)dr-\int_s^Tz^{\bar t,\bar x}_rdB_r-N_T+N_s.
\end{eqnarray*}
To consider an auxiliary BSDE with terminal time $t$ we define as in the standard case (see e.g. \cite{ElK})
\begin{eqnarray*}
G(s,x,p)&=&\frac{\partial \phi}{\partial t}(s,x,p)+\frac{1}{2}\textnormal{tr}(\sigma\sigma^*(s,x)D^2\phi(s,x,p))+\tilde H(t,x,\sigma^* (s,x)D\phi(s,x,p),p)\\
&=&\frac{\partial \phi}{\partial t}(s,x,p)+\frac{1}{2}\textnormal{tr}(\sigma\sigma^*(s,x)D^2\phi(s,x,p))+H(t,x,D\phi(s,x,p),p)
\end{eqnarray*}
and set 
\begin{eqnarray*}
\tilde Y^{\bar t,\bar x}_s&=& Y^{\bar t,\bar x}_s-\phi(s,X^{\bar t, \bar x}_s,\bold p_{s})-\int_s^t G(r,\bar x,\bar p) dr\\
&&\ \ \ \ \ +\sum_{\bar t \leq r\leq s} \left(\phi(r,X^{\bar t,\bar x}_r,\bold p_{r})-\phi(r,X^{\bar t,\bar x}_r,\bold p_{r-})-\langle \frac{\partial}{\partial p}\phi(r,X^{\bar t,\bar x}_r,\bold p_{r-}),\bold p_{r}-\bold p_{r-}\rangle\right)\\
\tilde z^{\bar t,\bar x}_s&=& z^{\bar t,\bar x}_s-\sigma^*(s,X^{\bar t, \bar x}_s)D_x\phi(s,X^{\bar t, \bar x}_s,\bold p_{s}).
\end{eqnarray*}
Then by It\^o's formula the triplet $(\tilde Y^{\bar t,\bar x},\tilde z^{\bar t,\bar x}, N)$ fulfills 
\begin{eqnarray*}
\tilde Y^{\bar t,\bar x}_s&=& Y^{\bar t,\bar x}_{t-}+\int_s^t \bigg(\tilde{H}(r,X^{\bar t,\bar x}_r,\tilde z^{\bar t,\bar x}_r+\sigma^*(r,X^{\bar t, \bar x}_r)D_x\phi(r,X^{\bar t, \bar x}_r,\bold p_{r}), \bold p_r)\bigg)dr\\
&&\ \ \ \ \ -\int_s^t \tilde z^{\bar t,\bar x}_rdB_r- N_{t-}+N_s\\
&&-\phi(t,X^{\bar t, \bar x}_t,\bold p_{t-})+\int_s^t \bigg(\frac{\partial \phi}{\partial t}(r,X^{\bar t, \bar x}_r,\bold p_r)+\frac{1}{2}\textnormal{tr}(\sigma\sigma^*(r,X^{\bar t, \bar x}_r)D^2\phi(r,X^{\bar t, \bar x}_r,\bold p_r))\bigg)dr\\
&&\ \ \ \ \ +\sum_{s \leq r< t } \left(\phi(r,X^{\bar t,\bar x}_r,\bold p_{r})-\phi(r,X^{\bar t,\bar x}_r,\bold p_{r-})-\langle \frac{\partial}{\partial p}\phi(r,X^{\bar t,\bar x}_r,\bold p_{r-}),\bold p_{r}-\bold p_{r-}\rangle\right)\\
&&-\int_s^t G(r,\bar x,\bar p) dr\\
&&+\sum_{\bar t \leq r\leq s} \left(\phi(r,X^{\bar t,\bar x}_r,\bold p_{r})-\phi(r,X^{\bar t,\bar x}_r,\bold p_{r-})-\langle \frac{\partial}{\partial p}\phi(r,X^{\bar t,\bar x}_r,\bold p_{r-}),\bold p_{r}-\bold p_{r-}\rangle\right),
\end{eqnarray*}
hence is on $[\bar t,t[$ the solution to the BSDE
\begin{eqnarray*}
\tilde Y^{\bar t,\bar x}_s&=&\xi+\int_s^t \bigg(\tilde{H}(r,X^{\bar t,\bar x}_r,\tilde z^{\bar t,\bar x}_r+\sigma^*(r,X^{\bar t, \bar x}_r)D_x\phi(r,X^{\bar t, \bar x}_r,\bold p_{r}), \bold p_r)\\
&&\ \ \ \ \ \ \ \ \ \ \ \ \ \ \ \ +\frac{\partial \phi}{\partial t}(r,X^{\bar t, \bar x}_r,\bold p_r)+\frac{1}{2}\textnormal{tr}(\sigma\sigma^*(r,X^{\bar t, \bar x}_r)D^2\phi(r,X^{\bar t, \bar x}_r,\bold p_r))-G(r,\bar x,\bar p)\bigg)dr\\
&&\ \ \ \ -\int_s^t \tilde z^{\bar t,\bar x}_rdB_r-N_{t-}+ N_s
\end{eqnarray*}
with the terminal value
\[ \xi=\bar Y^{\bar t,\bar x}_{t-}-\phi(t,X^{\bar t, \bar x}_t,\bold p_{t-})+\sum_{\bar t \leq r< t} \left(\phi(r,X^{\bar t,\bar x}_r,\bold p_{r})-\phi(r,X^{\bar t,\bar x}_r,\bold p_{r-})-\langle \frac{\partial}{\partial p}\phi(r,X^{\bar t,\bar x}_r,\bold p_{r-}),\bold p_{r}-\bold p_{r-}\rangle\right).
\]
Note that by the strict convexity assumption on $\phi$ it holds ${\mathbb{P}^\epsilon}$-a.s.
\begin{eqnarray}
\sum_{ \bar t\leq r< t} \left(\phi(r,X^{\bar t,\bar x}_r,\bold p_{r})-\phi(r,X^{\bar t,\bar x}_r,\bold p_{r-})-\langle \frac{\partial}{\partial p}\phi(r,X^{\bar t,\bar x}_r,\bold p_{r-}),\bold p_{r}-\bold p_{r-}\rangle\right) \geq0.
\end{eqnarray}
Furthermore by Lemma 4.8. and the choice of  $\phi$ we have $Y^{\bar t,\bar x}_{t-}\geq W(t,X^{\bar t,\bar x}_{t},\bold p_{t-})\geq \phi(t,X^{\bar t,\bar x}_{t},\bold p_{t-})$, hence $\xi\geq0$.\\
Consider now the solution to the BSDE with the same driver but target $0$, i.e.
\begin{equation}
\begin{array}{rcl}
\bar Y^{\bar t,\bar x}_s&=&\int_s^t \big(\tilde{H}(r,X^{\bar t,\bar x}_r,\bar z^{\bar t,\bar x}_r+\sigma^*(r,X^{\bar t, \bar x}_r)D_x\phi(r,X^{\bar t, \bar x}_r,\bold p_{r}), \bold p_r)\\
\ \\
&&\ \ \ \ \ +\frac{\partial \phi}{\partial t}(r,X^{\bar t, \bar x}_r,\bold p_r)+\frac{1}{2}\textnormal{tr}(\sigma\sigma^*(r,X^{\bar t, \bar x}_r)D^2\phi(r,X^{\bar t, \bar x}_r,\bold p_r))-G(r,\bar x,\bar p)\big)dr\\
\ \\
&&\ \ \ \ \ -\int_s^t \bar z^{\bar t,\bar x}_rdB_r-\bar N_{t-}+\bar N_s.
\end{array}
\end{equation}
Note that by Theorem A.4. we have
\begin{eqnarray}
\tilde Y^{\bar t,\bar x}_{\bar t-}\geq  \bar Y^{\bar t,\bar x}_{\bar t-},
\end{eqnarray}
while by Proposition A.3. it holds
\begin{eqnarray}
\mathbb{E}_{\mathbb{P}^\epsilon}\left[\int_{\bar{t}}^t |\bar z^{\bar t,\bar x}_s|^2ds\big|\mathcal{F}_{\bar t-}\right]\leq c \mathbb{E}_{\mathbb{P}^\epsilon}\left[\int_{\bar t}^t|\bar f_s|^2ds\big|\mathcal{F}_{\bar t-}\right]
\end{eqnarray}
with
\begin{eqnarray*}
&& \bar f_s:=\tilde{H}(s,X^{\bar t,\bar x}_s,\sigma^*(s,X^{\bar t, \bar x}_s)D_x\phi(s,X^{\bar t, \bar x}_s,\bold p_s), \bold p_s)\\
&&\ \ \ \ \ \ \ \ \  \ \ +\frac{\partial \phi}{\partial t}(s,X^{\bar t, \bar x}_s,\bold p_s)+\frac{1}{2}\textnormal{tr}(\sigma\sigma^*(s,X^{\bar t, \bar x}_s)D^2\phi(s,X^{\bar t, \bar x}_s,\bold p_s))-G(s,\bar x,\bar p).
\end{eqnarray*}
Because $\tilde H$ is uniformly Lipschitz continuous in $p$ and the derivatives of $\phi$ with respect to $p$ are uniformly bounded, we have
\begin{eqnarray*}
|\bar f_s|
&\leq& \bigg|\tilde{H}(s,X^{\bar t,\bar x}_s,\sigma^*(s,X^{\bar t, \bar x}_s)D_x\phi(s,X^{\bar t, \bar x}_s,\bar p), \bar p)\\
&&\ \ \ \ \ \ \ \ \ \  \ \ \ \ \ +\frac{\partial \phi}{\partial t}(s,X^{\bar t, \bar x}_s,\bar p)+\frac{1}{2}\textnormal{tr}(\sigma\sigma^*(s,X^{\bar t, \bar x}_s)D^2\phi(s,X^{\bar t, \bar x}_s,\bar p))-G(s,\bar x,\bar p)\bigg|\\
&&\ \ \ \ \ +c\left| \bold p_s-\bar p\right|
\end{eqnarray*}
and it holds as in \cite{ElK} by the estimate (54) for all $\epsilon'>0$
\begin{eqnarray*}
\mathbb{E}_{\mathbb{P}^\epsilon}\left[\int_{\bar t}^t|\bar f_s|^2ds\big|\mathcal{F}_{\bar t-}\right]&\leq& \frac{1}{4\epsilon'} (t-\bar t)O(t-\bar t)+\epsilon'c\ \ \mathbb{E}_{\mathbb{P}^\epsilon}\left[\int_{\bar t}^t\left| \bold p_s-\bar p\right|^2ds\big|\mathcal{F}_{\bar t-}\right]\\
&\leq& \frac{1}{4\epsilon'} (t-\bar t)O(t-\bar t)+\epsilon'c (t-\bar t)^\frac{3}{2},
\end{eqnarray*}
where $O(t-\bar t)\rightarrow 0$ as $t\rightarrow\bar t$.
Hence it holds by (58) and Cauchy inequality
\begin{eqnarray}
\mathbb{E}_{\mathbb{P}^\epsilon}\left[\int_{\bar{t}}^t |\bar z^{\bar t,\bar x}_s|ds\big|\mathcal{F}_{\bar t-}\right]
\leq c \left((t-\bar t){O(t-\bar t)}+(t-\bar t)^\frac{5}{4}\right)
\end{eqnarray}
and 
\begin{eqnarray*}
\bar Y^{\bar t,\bar x}_{\bar t-}\geq -c \mathbb{E}_{\mathbb{P}^\epsilon}\left[\int_{\bar t}^t|\bar f_s|ds|\mathcal{F}_{\bar t-}\right]- c \mathbb{E}_{\mathbb{P}^\epsilon}\left[\int_{\bar{t}}^t |\bar z^{\bar t,\bar x}_s|ds|\mathcal{F}_{\bar t-}\right]
\geq - c \left((t-\bar t){O(t-\bar t)}+(t-\bar t)^\frac{5}{4}\right).
\end{eqnarray*}
So by the (57) we have
\begin{eqnarray}
\tilde Y^{\bar t,\bar x}_{\bar t-} \geq -c \mathbb{E}_{\mathbb{P}^\epsilon}\left[\int_{\bar t}^t|\bar f_s|ds|\mathcal{F}_{\bar t-}\right]- c \mathbb{E}_{\mathbb{P}^\epsilon}\left[\int_{\bar{t}}^t |\bar z^{\bar t,\bar x}_s|ds|\mathcal{F}_{\bar t-}\right]
\geq - c \left((t-\bar t){O(t-\bar t)}+(t-\bar t)^\frac{5}{4}\right).
\end{eqnarray}

\textbf{Step 4}: The theorem is proved, if we show $G(\bar t, \bar x, \bar p)\leq 0$.
Note that by definition of $\tilde{Y}^{\bar t,\bar x}$
\begin{equation}
\begin{array}{rcl}
\tilde{Y}^{\bar t,\bar x}_{\bar t-}&=&{Y}^{\bar t,\bar x}_{\bar t-}-\phi(\bar t,\bar x, \bar p) -\int_{\bar t}^t G(r,\bar x,\bar p) dr.
\end{array}
\end{equation}
Since $\phi(\bar t,\bar x, \bar p)=W(\bar t,\bar x, \bar p)$, we have by the choice of  $\mathbb{P}^\epsilon$ and the Dynamic Programming (Theorem 4.7.)
\begin{eqnarray*}
{Y}^{\bar t,\bar x}_{\bar t-}-\phi(\bar t,\bar x, \bar p)
&=&{Y}^{\bar t,\bar x}_{\bar t-}-W(\bar t,\bar x, \bar p)\\
&\leq&{Y}^{\bar t,\bar x}_{\bar t-}-\mathbb{E}_{\mathbb{P}^\epsilon}\left[\int_{\bar t}^t \tilde{H}(s,X^{\bar t,\bar x}_s,z^{\bar t,\bar x}_s, \bold p_s)ds+W(t,X^{\bar t,\bar x}_t, \bold p_{t-})\big|\mathcal{F}_{\bar t-}\right]+\epsilon(t-\bar t)\\
&=&\mathbb{E}_{\mathbb{P}^\epsilon}\left[{Y}^{\bar t,\bar x}_{t-}-W(t,X^{\bar t,\bar x}_t, \bold p_{t-})\big|\mathcal{F}_{\bar t-}\right]+\epsilon(t-\bar t).
\end{eqnarray*}
Recall that by the choice of $\mathbb{P}^\epsilon$ according to Lemma 4.9. it holds
\begin{eqnarray*}
				Y^{\bar t,\bar x}_{t-}-W(t,X_{t}^{\bar t,\bar x},\bold p_{t-})\leq c\epsilon(t-\bar t).
\end{eqnarray*}
Hence
\begin{eqnarray}
{Y}^{\bar t,\bar x}_{\bar t-}-\phi(\bar t,\bar x, \bar p)
\leq c \epsilon (t-\bar t).
\end{eqnarray}
Thus from (61) with (62) we have
\begin{eqnarray*}
\tilde{Y}^{\bar t, \bar x}_{\bar t-}+\int_{\bar t}^t G(r,\bar x,\bar p) dr\leq c \epsilon (t-\bar t)
\end{eqnarray*}
and finally by the estimate (60)
\begin{eqnarray*}
-c \left((t-\bar t){O(t-\bar t)}+(t-\bar t)^\frac{5}{4}\right)+\int_{\bar t}^t G(r,\bar x,\bar p) dr\leq c \epsilon (t-\bar t),
\end{eqnarray*}
hence
\begin{eqnarray*}
\frac{1}{(t-\bar t)}\int_{\bar t}^tG(s,\bar x,\bar p)ds\leq  c\  \left({O(t-\bar t)}+(t-\bar t)^\frac{1}{4}\right)+c \epsilon
\end{eqnarray*}
which implies (40) as $t\downarrow \bar t$ since $\epsilon>0$ can be chosen arbitrary small.
\end{proof}

Thus by Proposition 4.11., 4.12. and comparison for (8) (see \cite{CaRa1}, \cite{Ca}) we now have the following result. 
\begin{thm}
$W$ is the unique viscosity solution to (8).
\end{thm}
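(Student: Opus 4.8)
The plan is to recognise Theorem 4.13 as the synthesis of the partial results already obtained: once $W$ is shown to be a continuous viscosity solution of the obstacle problem (8) carrying the correct terminal data, uniqueness is supplied directly by the comparison principle of \cite{Ca}. So the work left is assembly plus citation, not fresh estimation.

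First I would collect the regularity and boundary behaviour of $W$. Joint continuity on $[0,T]\times\mathbb{R}^d\times\Delta(I)$ follows by combining Proposition 4.5 (uniform Lipschitz continuity in $x$ and uniform H\"older continuity in $t$) with Proposition 4.6 (convexity and uniform Lipschitz continuity in $p$). For the terminal condition, note that at $t=T$ the BSDE (10) is trivial on $[T,T]$, so $Y_T^{T,x,\mathbb{P}}=\langle \mathbf{p}_T, g(X_T^{T,x})\rangle=\langle \mathbf{p}_T, g(x)\rangle$ for every $\mathbb{P}\in\mathcal{P}(T,p)$; taking conditional expectations as in (11) and using the martingale property $\mathbb{E}_{\mathbb{P}}[\mathbf{p}_T\mid\mathcal{F}_{T-}]=\mathbf{p}_{T-}=p$ gives $Y_{T-}^{T,x,\mathbb{P}}=\langle p,g(x)\rangle$ independently of $\mathbb{P}$, whence the essential infimum in (12) yields $W(T,x,p)=\sum_i p_i g_i(x)$.

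Next I would simply invoke the two inequalities already proved: by Proposition 4.11, $W$ is a viscosity subsolution of (8) on $[0,T]\times\mathbb{R}^d\times\mathrm{Int}(\Delta(I))$, and by Proposition 4.12 it is a viscosity supersolution on all of $[0,T]\times\mathbb{R}^d\times\Delta(I)$. Together with the continuity and the terminal data, this is exactly the statement that $W$ is a viscosity solution of the state-constrained obstacle problem (8) in the asymmetric sense of Remark 2.9.

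Finally I would close the argument by comparison: the uniqueness result for (8) established in \cite{Ca} (see also \cite{CaRa1}) guarantees that any such subsolution lies below any such supersolution sharing the same terminal condition, so the viscosity solution of (8) is unique. Comparing $W$ with the function $V$ of Theorem 2.8, itself the unique viscosity solution of (8) with terminal data $\sum_i p_i g_i(x)$, forces $W\le V$ and $V\le W$, i.e.\ $W=V$, and in particular $W$ is the unique viscosity solution of (8). The only point requiring care is conceptual rather than computational: the comparison principle must be used in its state-constrained form, where the subsolution inequality is tested only on $\mathrm{Int}(\Delta(I))$ while the supersolution inequality holds up to the boundary of $\Delta(I)$ — precisely the regime that Propositions 4.11 and 4.12 were tailored to furnish. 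Everything else is a direct citation.
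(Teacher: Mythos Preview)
Your proposal is correct and matches the paper's approach exactly: the paper's proof of Theorem 4.13 is literally the single sentence ``Thus by Proposition 4.11., 4.12. and comparison for (8) (see \cite{CaRa1}, \cite{Ca}) we now have the following result,'' and you have simply spelled out the ingredients (continuity from Propositions 4.5--4.6, the terminal condition, and the state-constrained comparison of Remark 2.9) that make this citation legitimate. No deviation in method, only in level of detail.
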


Theorem 3.4. follows directly from Theorem 4.13. and the characterization of the value function in Theorem 2.7.

\section{Concluding remarks}
In this paper we have shown an alternative representation of the value function in terms of a minimization of solutions of certain BSDEs over some specific martingale measures. These BSDEs correspond to the dynamics of a stochastic differential game with the beliefs of the uninformed player (modulo a Girsanov transformation) as an additional forward dynamic. We used this to show how to explicitly determine the optimal reaction of the informed player under some rather restrictive assumptions. For a generalization a careful analysis of the optimal measure in the representation of Threorem 3.4. is necessary. In the simpler framework of \cite{CaRa1} the existence of a weak limit $\mathbb{P}^*$ for a minimizing sequence is straightforward using \cite{MeZ}. In our case any limiting procedure needs to take into account the BSDE structure. The question of existence of an optimal measure under which there is a representation by a soltution to a BSDE poses therefore a rather delicate problem, which shall be addressed in a subsequent work.

 \appendix
 
 \section{Results for BSDE on $\mathcal{D}([0,T];\Delta(I))\times\mathcal{C}([0,T];\mathbb{R}^d)$}

Here we give proofs for versions of standard BSDE results adapted to our setting. Let $\Omega:=\mathcal{D}([0,T];\Delta(I))\times\mathcal{C}([0,T];\mathbb{R}^d)$ and $(\Omega,\mathcal{F},(\mathcal{F}_s)_{s\in[t,T]})$ be defined as in section 3.1. We fix a $\mathbb{P}\in\mathcal{P}(t,p)$ and denote $\mathbb{E}_\mathbb{P}[\cdot]=\mathbb{E}[\cdot]$.

Let $\xi\in\mathcal{L}_T^2(\mathbb{P})$, i.e.  $\xi$ is a square integrable $\mathcal{F}_T$-measurable random variable. Let $f:\Omega\times[0,T]\times\mathbb{R}^d\rightarrow\mathbb{R}$ be $\mathcal{P}\otimes\mathcal{B}(\mathbb{R}^d)$ measurable, such that $f(\cdot,0)\in\mathcal{H}^2(\mathbb{P})$ and such that, there exists a constant c, such that $\mathbb{P}\otimes dt$ a.s.
\begin{eqnarray}
|f(\omega,s,z^1)-f(\omega,s,z^2)|\leq c |z^1-z^2|\ \ \ \forall z^1,z^2\in\mathbb{R}^d.
\end{eqnarray}

We consider on $\mathcal{D}([0,T];\Delta(I))\times\mathcal{C}([0,T];\mathbb{R}^d)$ the BSDE
\begin{eqnarray}
Y_s&=&\xi+ \int_s^Tf(r,z_r)ds+\int_s^Tz_rdB_r- (N_T-N_s).
\end{eqnarray}

\begin{thm}
For any fixed $\mathbb{P}\in\mathcal{P}(t,x)$ there exists a solution $(Y,z,N)\in\mathcal{H}^2(\mathbb{P})\times\mathcal{H}^2(\mathbb{P})\times \mathcal{M}^2_0(\mathbb{P})$ to (A.2), such that N is strongly orthogonal to $\mathcal{I}^2(\mathbb{P})$. Furthermore $(Y,z)$ are unique in $\mathcal{H}^2(\mathbb{P})\times\mathcal{H}^2(\mathbb{P})$ and $N\in\mathcal{M}^2_0(\mathbb{P})$ is unique up to indistinguability.
\end{thm}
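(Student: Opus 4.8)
The plan is to follow the classical two-step scheme for BSDEs, adapted to the fact that on the enlarged filtration $\mathcal{F}=\mathcal{G}\otimes\mathcal{H}$ the martingale representation property fails: the martingale $\bold p$ carries jumps that cannot be captured by stochastic integrals against $B$. This failure is precisely what forces the orthogonal remainder $N$ into (A.2), and it dictates the whole structure of the proof.

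\textbf{Step 1 (affine drivers).} First I would treat the case in which the driver is a fixed process, replacing $f(r,z_r)$ by a given $\phi\in\mathcal{H}^2(\mathbb{P})$. I set
\[
Y_s=\mathbb{E}\left[\xi+\int_s^T\phi_r\,dr\,\Big|\,\mathcal{F}_s\right],
\]
so that $M_s:=Y_s+\int_0^s\phi_r\,dr=\mathbb{E}[\xi+\int_0^T\phi_r\,dr\mid\mathcal{F}_s]$ is a square-integrable $\mathbb{P}$-martingale. Since $\mathcal{I}^2(\mathbb{P})$ is a closed stable subspace of $\mathcal{M}^2_0(\mathbb{P})$, the Galtchouk--Kunita--Watanabe decomposition yields a unique orthogonal splitting
\[
M_s-M_0=\int_0^s z_r\,dB_r+N_s,\qquad z\in\mathcal{H}^2(\mathbb{P}),\ N\in\mathcal{M}^2_0(\mathbb{P}),
\]
with $N$ strongly orthogonal to $\mathcal{I}^2(\mathbb{P})$. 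Rearranging produces the desired triple, and uniqueness of $(z,N)$ in this representation is immediate from the orthogonality of the summands.

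\textbf{Step 2 (fixed point).} For the Lipschitz driver I would run a Picard iteration. Given $v\in\mathcal{H}^2(\mathbb{P})$, Step 1 applied to $\phi_r=f(r,v_r)$ produces a triple $(Y,z,N)$; denote by $\Phi(v)=z$ the associated map. For two inputs $v,v'$ with outputs $(Y,z,N)$ and $(Y',z',N')$, I would apply It\^o's formula to $|Y_s-Y'_s|^2e^{\beta s}$ and take expectations. The strong orthogonality of $N-N'$ to $\int(z-z')\,dB$ makes the cross term vanish, so the brackets of $\int z\,dB$ and of $N$ enter with the correct sign; combined with the Lipschitz bound (A.1), choosing $\beta$ large enough yields a strict contraction of $\Phi$ on $\mathcal{H}^2(\mathbb{P})$ equipped with the $e^{\beta s}$-weighted norm. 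The fixed point $z^*$ gives existence, and reconstructing $Y^*$ together with $N^*=M^*-\int z^*\,dB$ completes the solution.

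\textbf{Uniqueness and main obstacle.} Uniqueness of $(Y,z)$ in $\mathcal{H}^2(\mathbb{P})\times\mathcal{H}^2(\mathbb{P})$ follows from the same a priori estimate applied to any two solutions, after which $N$ is pinned down up to indistinguishability as $M-\int z\,dB$. The main difficulty lies entirely in Step 1: in the present filtration one cannot represent $M$ purely as a Brownian integral, so the existence of $z$ and of the orthogonal remainder $N$ rests on verifying that $\mathcal{I}^2(\mathbb{P})$ is a closed stable subspace of $\mathcal{M}^2_0(\mathbb{P})$ and that the orthogonal projection of $M-M_0$ onto it has integrand in $\mathcal{H}^2(\mathbb{P})$. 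Once this decomposition is secured, the contraction argument is routine, precisely because the driver does not depend on the jump part and hence no jump term enters the Lipschitz estimate.
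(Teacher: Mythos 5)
Your proposal is correct and follows essentially the same route as the paper: the affine/linear case is handled by the Galtchouk--Kunita--Watanabe decomposition of the martingale $\mathbb{E}[\xi+\int_0^T f(r,z_r)\,dr\mid\mathcal{F}_s]$ into a Brownian integral plus a strongly orthogonal remainder $N$, and the Lipschitz driver is then treated by a contraction in the $e^{\beta s}$-weighted $\mathcal{H}^2$ norm obtained from It\^o's formula applied to $e^{\beta s}(\delta Y_s)^2$ (the paper phrases this as a Picard iteration whose increments contract, which is the same argument as your fixed-point formulation). The only cosmetic difference is that the paper folds Step 1 into the recursive definition of the iterates rather than isolating the affine case as a separate lemma.
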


\begin{rem}
The proof we give is a combination of the proof for the solvability of BSDE given in \cite{ElK} and the Galtchouk-Kunita-Watanabe decomposition (see e.g. \cite{AnS}). For the reader's convenience we recall:\\
By the Galtchouk-Kunita-Watanabe Theorem $\mathcal{I}^{2}(\mathbb{P})$ is a stable subspace of $\mathcal{M}^2_0(\mathbb{P})$ and we have for any $\xi\in\mathcal{L}^2_T(\mathbb{P})$ a decomposition
\begin{eqnarray}
\xi=\mathbb{E}_{\mathbb{P}}[\xi|\mathcal{F}_{0}]+\int_{0}^T \theta_s dB_s+N_T
\end{eqnarray}
with a $\theta\in\mathcal{H}^2(\mathbb{P})$ and a $N\in\mathcal{M}^2_0(\mathbb{P})$ which is strongly orthogonal to $\mathcal{I}^{2}(\mathbb{P})$, i.e. $N\int\theta dB$ is a $\mathbb{P}$ martingale for every $\theta\in\mathcal{H}^{2}(\mathbb{P})$ or equivalently $\langle B,{N}^c \rangle_s=0$ for all $ s \in [0,T]$,  where $N^c$ denotes the continuous part of $N$ (since $B,{N}$ are square integrable see \cite{JaShi} I.4.15).
Moreover this representation is unique up to indistinguishability. 
\end{rem}

\begin{proof}
Let $Y^0\equiv0$, $z^{0}\equiv0$ and define recursively for $n\geq1$ by Galtchouk-Kunita-Watanabe 
\begin{eqnarray}
Y^{n}_s&=&\mathbb{E}\left[\int_{s}^T f(r,z^{n-1}_r)dr+\xi\big|\mathcal{F}_s\right]\\
&=&\xi+\int_s^T  f(r,z^{n-1}_r)dr-\int_s^T z^{n}_r dB_r-(N^{n}_T-N^n_s).
\end{eqnarray}
First observe that  for all $n\in\mathbb{N}$ by induction and Burholder-Davis-Gundy the $\mathcal{F}_T$-measurable random variable $\sup_{s\in[t,T]} |Y^{n}_s|$ is square integrable.\\
Set $\delta Y^{n}=Y^{n}-Y^{n-1}$, $\delta z^{n}= z^{n}- z^{n-1}$, $\delta N^{n}= N^{n}- N^{n-1}$. Then it holds by It\^o's formula 
\begin{eqnarray*}
&&e^{\beta s} (\delta Y^{n}_s)^2=e^{\beta T}(\delta Y^{n}_T)^2-\beta\int_s^T e^{\beta r} (\delta Y^{n}_r)^2 dr+2\int_s^T e^{\beta r} \delta Y^{n}_r \left( f(r,z^{n-1}_r)-f(r,z^{n-2}_r)\right)dr\\
&&\ \ \ -\int_s^T e^{\beta r} |\delta z^{n}_s|^2 dr-\int_s^T e^{\beta r}  d \langle (\delta N^{n})^c\rangle_r\\
&&\ \ \ -\sum_{s\leq r\leq T} e^{\beta r} \left[(\delta Y^{n}_{r-}+\Delta\delta N^{n}_r)^2-(\delta Y^{n}_{r-})^2-2\delta Y^{n}_{r-}\Delta\delta N^{n}_r\right]\\
&&\ \ \ -2\int_s^T e^{\beta r} \delta Y^{n}_r \delta z^{n}_r dB_r-2\int_t^T e^{\beta r} \delta Y^{n}_r d\delta N^{n}_r,
\end{eqnarray*}
where $\Delta\delta N^{n}$ denotes the jumps of $\delta N^{n}$.\\
Since $\sup_{s\in[0,T]} |Y^{n}_s|$ is square integrable, all martingales in above equation are real martingales with expectation zero. Hence we have using the Lipschitz assumption (A.1) and Cauchy inequality for all $\epsilon>0$
\begin{eqnarray*}
&&\mathbb{E}\left[e^{\beta s} (\delta Y^{n}_s)^2\right]+\mathbb{E}\left[\int_s^T e^{\beta r} |\delta z^{n}_r|^2 ds\right]+\mathbb{E}\left[\int_s^T e^{\beta r}  d \langle \delta N^{n}\rangle_r\right]\\
&&=\mathbb{E}\left[e^{\beta T}(\delta Y^{n}_T)^2\right]-\mathbb{E}\left[\beta\int_s^T e^{\beta r} (\delta Y^{n}_r)^2 dr\right]+2\mathbb{E}\left[\int_s^T e^{\beta r} \delta Y^{n}_r \left(f(r,z^{n-1}_r)-f(r,z^{n-2}_r)\right)dr\right]\\
&&\leq \mathbb{E}\left[e^{\beta T}(\delta Y^{n}_T)^2\right]-\mathbb{E}\left[\beta\int_s^T e^{\beta r} (\delta Y^{n}_r)^2 dr\right]-2c\mathbb{E}\left[\int_s^T e^{\beta r} \delta Y^{n}_r  |\delta z^{n-1}_r| dr\right]\\
&&\leq  \mathbb{E}\left[e^{\beta T}(\delta Y^{n}_T)^2\right]+\left(\frac{c}{\epsilon}-\beta\right) \mathbb{E}\left[\int_s^T  e^{\beta r}(\delta Y^{n}_r)^2 dr\right]
+c\epsilon \ \mathbb{E}\left[\int_s^T e^{\beta r} |\delta z^{n-1}_r|^2 dr\right].
\end{eqnarray*}
Thus
\begin{eqnarray*}
&&\mathbb{E}\left[e^{\beta s} (\delta Y^{n}_s)^2\right]+\left(\beta-\frac{c}{\epsilon}\right) \mathbb{E}\left[\int_s^T e^{\beta r}(\delta Y^{n}_r)^2 dr\right]+\mathbb{E}\left[\int_s^T e^{\beta r} |\delta z^{n}_r|^2 dr\right]+\mathbb{E}\left[\int_s^T e^{\beta r}  d \langle \delta N^{n}\rangle_r\right]\\
&&\ \ \ \ \ \ \ \ \ \ \ \ \ \ \ \ \ \ \ \leq  \mathbb{E}\left[e^{\beta T}(\delta Y^{n}_T)^2\right] + c\epsilon \ \mathbb{E}\left[\int_s^T e^{\beta r} |\delta z^{n-1}_r|^2 dr\right].
\end{eqnarray*}
Since by construction $\delta Y^{n}_T=0$ we have choosing $\epsilon<\frac{1}{c}$ and $\beta>c^2$ 
\begin{eqnarray*}
\mathbb{E}\left[\int_s^T e^{\beta r} |\delta z^{n}_r|^2 dr\right] \leq   c\epsilon \ \mathbb{E}\left[\int_s^T e^{\beta r} |\delta z^{n-1}_r|^2 dr\right],
\end{eqnarray*}
hence convergence of $(z^{n})_{n\in\mathbb{N}}$ in the space $\mathcal{H}^2_\beta(\mathbb{P})$ with a weighted norm implying the convergence of $(z^{n})_{n\in\mathbb{N}}$ in $\mathcal{H}^2(\mathbb{P})$ to a process $z$.
It follows then immediately by (A.3), that $(Y^{n})_{n\in\mathbb{N}}$ converges in $\mathcal{H}^2(\mathbb{P})$ and by Galtchouk-Kunita-Watanabe there exist unique (up to indistinguishability) $N\in\mathcal{M}^2_0(\mathbb{P})$, such that the triplet $(Y,z,N)$ solves (A.2). 
\end{proof}

In other words there exists a unique $z\in\mathcal{H}^2(\mathbb{P})$ such that $Y$ can be represented as
\begin{eqnarray}
Y_s=\mathbb{E}\left[\int_{s}^T  f(r,z_r)ds+\xi \big|\mathcal{F}_s\right].
\end{eqnarray}

Furthermore we note that by the very same methods as in the proof to Theorem A.1. we have the following dependence on the data.

\begin{prop}
For $i=1,2$, let $\xi^i\in\mathcal{L}_T^2(\mathbb{P})$. Let $f^i:\Omega\times[0,T]\times\mathbb{R}^d\rightarrow\mathbb{R}$ be two generators for the BSDE (A.2), i.e. $\mathcal{P}\otimes\mathcal{B}(\mathbb{R}^d)$ measurable, $f^i(\cdot,0)\in\mathcal{H}^2(\mathbb{P})$ and $f^i$ are uniformly Lipschitz continuous in z.\\
Let $(Y^{i},z^{i},N^i)\in\mathcal{H}^2(\mathbb{P})\times\mathcal{H}^2(\mathbb{P}) \times \mathcal{M}^2_0(\mathbb{P})$ be the respective solutions. Set $\delta z=z^{1}-z^{2}$ and $\delta \xi =\xi^1-\xi^2$, $\delta f=f^1(\cdot,z^{2}_\cdot)-f^2(\cdot,z^{2}_\cdot)$. Then it holds for any $s\in[0,T]$
\begin{eqnarray}
\mathbb{E}\left[\int_s^T |\delta z_r|^2 dr|\mathcal{F}_s\right]\leq c \left(\mathbb{E}\left[|\delta \xi |^2|\mathcal{F}_s\right]+\mathbb{E}\left[\int_s^T |\delta f_r|^2dr\big|\mathcal{F}_s\right]\right)
\end{eqnarray}
\end{prop}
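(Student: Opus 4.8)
The plan is to run the same energy estimate that proves Theorem A.1, but applied to the difference of the two solutions rather than to successive Picard iterates. Write $\delta Y=Y^{1}-Y^{2}$ and $\delta N=N^{1}-N^{2}$, and recall $\delta z=z^{1}-z^{2}$, $\delta\xi=\xi^{1}-\xi^{2}$, $\delta f=f^{1}(\cdot,z^{2}_\cdot)-f^{2}(\cdot,z^{2}_\cdot)$. Subtracting the two instances of (A.2), the triplet $(\delta Y,\delta z,\delta N)$ solves a BSDE with terminal datum $\delta\xi$ and generator increment $f^{1}(r,z^{1}_r)-f^{2}(r,z^{2}_r)$, so that $d\delta Y_r=-(f^{1}(r,z^{1}_r)-f^{2}(r,z^{2}_r))\,dr-\delta z_r\,dB_r+d\delta N_r$.

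First I would apply It\^o's formula to $e^{\beta r}(\delta Y_r)^2$ exactly as in the proof of Theorem A.1, integrate over $[s,T]$ and take $\mathbb{E}[\,\cdot\,|\mathcal{F}_s]$. Since $\sup_r|\delta Y_r|$ is square integrable (as in Theorem A.1), the $dB$ and $d\delta N$ stochastic integrals are true martingales with vanishing conditional increments, and the jump bookkeeping $\sum_{s\le r\le T}[(\delta Y_{r-}+\Delta\delta N_r)^2-(\delta Y_{r-})^2-2\delta Y_{r-}\Delta\delta N_r]=\sum(\Delta\delta N_r)^2$ combines with $\langle(\delta N)^c\rangle$ into $\langle\delta N\rangle$. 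Using the strong orthogonality of $N^{1},N^{2}$ (hence of $\delta N$) to $\mathcal{I}^{2}(\mathbb{P})$ guaranteed by Theorem A.1, the bracket of $\delta N$ produces no cross term with $\int\delta z\,dB$ and enters only through the nonnegative quantity $\mathbb{E}[\int_s^T e^{\beta r}\,d\langle\delta N\rangle_r|\mathcal{F}_s]$. This yields the identity
\begin{eqnarray*}
&&e^{\beta s}(\delta Y_s)^2+\mathbb{E}\left[\int_s^T\beta e^{\beta r}(\delta Y_r)^2\,dr\Big|\mathcal{F}_s\right]+\mathbb{E}\left[\int_s^T e^{\beta r}|\delta z_r|^2\,dr\Big|\mathcal{F}_s\right]+\mathbb{E}\left[\int_s^T e^{\beta r}\,d\langle\delta N\rangle_r\Big|\mathcal{F}_s\right]\\
&&\ \ \ =\mathbb{E}\left[e^{\beta T}(\delta\xi)^2\big|\mathcal{F}_s\right]+2\,\mathbb{E}\left[\int_s^T e^{\beta r}\delta Y_r\big(f^{1}(r,z^{1}_r)-f^{2}(r,z^{2}_r)\big)\,dr\Big|\mathcal{F}_s\right].
\end{eqnarray*}

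Next I would control the last term using the Lipschitz hypothesis (A.1) in the form $|f^{1}(r,z^{1}_r)-f^{2}(r,z^{2}_r)|\le c|\delta z_r|+|\delta f_r|$, followed by Young's inequality $2\,c|\delta Y_r|\,|\delta z_r|\le\epsilon|\delta z_r|^2+\frac{c^2}{\epsilon}|\delta Y_r|^2$ and $2|\delta Y_r|\,|\delta f_r|\le|\delta Y_r|^2+|\delta f_r|^2$. Choosing $\epsilon<1$ and then $\beta>\frac{c^2}{\epsilon}+1$ absorbs the $(\delta Y)^2$ term and a fraction of the $|\delta z|^2$ term into the left-hand side; discarding the remaining nonnegative left-hand terms and bounding $1\le e^{\beta r}\le e^{\beta T}$ on $[0,T]$ gives the estimate asserted in the proposition with a generic constant $c$ depending only on $c$, $\beta$ and $T$.

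I do not expect a serious obstacle here, since this is the classical a priori estimate for BSDEs; the only setting-specific care lies in tracking the jumps of $\delta N$ and invoking strong orthogonality to $\mathcal{I}^{2}(\mathbb{P})$, so that $\delta N$ contributes only the harmless nonnegative bracket term and no cross term with $\int\delta z\,dB$. Conditioning on $\mathcal{F}_s$ instead of taking the full expectation is immaterial, as every stochastic integral involved is a genuine martingale with vanishing $\mathcal{F}_s$-conditional increment.
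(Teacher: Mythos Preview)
Your proposal is correct and follows exactly the route the paper indicates: the paper does not spell out the proof but states that Proposition A.3 follows ``by the very same methods as in the proof to Theorem A.1,'' which is precisely the weighted energy estimate via It\^o's formula applied to $e^{\beta r}(\delta Y_r)^2$ that you describe. Your handling of the jump terms, the strong orthogonality, the Lipschitz splitting $|f^{1}(r,z^{1}_r)-f^{2}(r,z^{2}_r)|\le c|\delta z_r|+|\delta f_r|$, and the absorption via Young's inequality with a suitable $\beta$ all mirror the computation in Theorem A.1.
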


Also we have the following comparison principle.

\begin{thm}
For $i=1,2$, let $\xi^i\in\mathcal{L}_T^2(\mathbb{P})$. Let $f^i:\Omega\times[0,T]\times\mathbb{R}^d\rightarrow\mathbb{R}$ be two generators for the BSDE (A.2), i.e. $f^i$ is $\mathcal{P}\otimes\mathcal{B}(\mathbb{R}^d)$ measurable, uniformly Lipschitz continuous in $z$ and $f^i(\cdot,0)\in\mathcal{H}^2(\mathbb{P})$.\\
Let $(Y^{i},z^{i},N^i)\in\mathcal{H}^2(\mathbb{P})\times\mathcal{H}^2(\mathbb{P}) \times \mathcal{M}^2_0(\mathbb{P})$ be the respective solutions. Assume
\begin{itemize}
\item[(i)] $\delta \xi= \xi^1-\xi^2\geq 0$ holds $\mathbb{P}$-a.s.
\item[(ii)] $\delta f=f^1(\cdot,z^{2}_\cdot)-f^2(\cdot,z^{2}_\cdot)\geq 0$ holds $\mathbb{P}\otimes dt$-a.s.
\end{itemize}
Then for any time $s\in[0,T]$ it holds $Y^{1}_s-Y^{2}_s\geq 0$ $\mathbb{P}$-a.s.
\end{thm}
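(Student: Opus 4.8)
The plan is to run the classical linearisation-and-change-of-measure proof of comparison, the single new ingredient being the treatment of the orthogonal martingale part, which is absorbed using its strong orthogonality to $\mathcal{I}^2(\mathbb{P})$. First I would set $\delta Y = Y^1 - Y^2$, $\delta z = z^1 - z^2$, $\delta N = N^1 - N^2$ and subtract the two equations (A.2) to obtain the linear BSDE
\[
\delta Y_s = \delta\xi + \int_s^T\big(f^1(r,z^1_r) - f^2(r,z^2_r)\big)\,dr - \int_s^T \delta z_r\,dB_r - (\delta N_T - \delta N_s).
\]
Writing $f^1(r,z^1_r) - f^2(r,z^2_r) = \big(f^1(r,z^1_r) - f^1(r,z^2_r)\big) + \delta f_r$ and invoking the Lipschitz bound (A.1), the first bracket linearises as $\langle a_r, \delta z_r\rangle$ for a predictable process $a$ with $|a_r| \le c$; thus the driver of $\delta Y$ is $\langle a_r,\delta z_r\rangle + \delta f_r$, with $\delta f_r \ge 0$ by (ii) and $\delta\xi \ge 0$ by (i).

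Next I would introduce the integrating factor $\Gamma = \mathcal{E}\big(\int_0^\cdot a_r\,dB_r\big)$, a strictly positive continuous martingale (boundedness of $a$ guarantees that $\Gamma$ is a genuine martingale lying in every $L^p$). Applying the product rule to $\Gamma_s\delta Y_s$, the key observation is that the martingale part of $\Gamma$ is a stochastic integral against $B$ and therefore an element of $\mathcal{I}^2(\mathbb{P})$, to which $\delta N$ is strongly orthogonal; consequently the covariation of $\Gamma$ with $\delta N$ vanishes and the only bracket term is the $B$--$B$ covariation $\Gamma_r\langle a_r,\delta z_r\rangle\,dr$. This term cancels exactly the drift $-\Gamma_r\langle a_r,\delta z_r\rangle\,dr$, leaving
\[
d(\Gamma_s\delta Y_s) = -\Gamma_s\delta f_s\,ds + \Gamma_s\delta z_s\,dB_s + \delta Y_s\Gamma_s a_s\,dB_s + \Gamma_s\,d\delta N_s.
\]

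I would then integrate from $s$ to $T$, use $\delta Y_T = \delta\xi$, and take the conditional expectation given $\mathcal{F}_s$. Once the stochastic integrals are shown to be true martingales, this yields
\[
\Gamma_s\delta Y_s = \mathbb{E}\Big[\Gamma_T\delta\xi + \int_s^T\Gamma_r\delta f_r\,dr\,\Big|\,\mathcal{F}_s\Big] \ge 0,
\]
so that dividing by $\Gamma_s > 0$ gives $Y^1_s - Y^2_s \ge 0$ $\mathbb{P}$-a.s. for every $s$.

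The main obstacle is justifying that the martingale increments vanish under $\mathbb{E}[\cdot\,|\,\mathcal{F}_s]$, i.e. that $\int\Gamma_r\delta z_r\,dB_r$, $\int\delta Y_r\Gamma_r a_r\,dB_r$ and especially $\int\Gamma_r\,d\delta N_r$ are uniformly integrable martingales rather than mere local martingales. The first two follow from the $\mathcal{H}^2$ bounds (via Proposition A.3) together with the $L^p$-integrability of $\Gamma$ and the boundedness of $a$; for the $\delta N$-term I would localise by a sequence of stopping times reducing $\Gamma\,\delta N$ to a martingale — this is exactly where strong orthogonality of $\delta N$ to $\mathcal{I}^2(\mathbb{P})$ is used both to make $\Gamma\,\delta N$ a local martingale and to drop the covariation above — and then pass to the limit using square-integrability of $\delta N$ and the $L^2$-control of $\Gamma$ to remove the localisation.
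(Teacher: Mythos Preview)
Your proposal is correct and follows essentially the same route as the paper: linearise the driver via the Lipschitz condition to obtain a linear BSDE for $\delta Y$, then use the Dol\'eans exponential $\Gamma=\mathcal{E}\!\big(\int a\,dB\big)$ to derive the representation $\delta Y_s=\mathbb{E}\big[\Gamma_T\delta\xi/\Gamma_s+\int_s^T(\Gamma_r/\Gamma_s)\,\delta f_r\,dr\,\big|\,\mathcal{F}_s\big]\ge 0$. The only cosmetic difference is that the paper invokes Girsanov's theorem directly (so that under the equivalent measure $\delta N$ remains a martingale by orthogonality to the density process), whereas you spell out the same mechanism via the It\^o product rule and the vanishing of $[\Gamma,\delta N]$; your more explicit discussion of the uniform integrability of the martingale increments is a welcome addition that the paper leaves implicit.
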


\begin{proof}
Set $\delta Y_s=Y^{1}_s-Y^{2}_s$, $\delta z_s=z^{1}_s-z^{2}_s$, $\delta N_s=N^{1}_s-N^{2}_s$. For $(z^{1}_s)_k-(z^{2}_s)_k>0$ set
\begin{eqnarray*}
\Delta^z f_s=\frac{f^1(s,\tilde z^{k-1}_s)-f^1(s,\tilde z^{k}_s)}{(z^{1}_s)_k-(z^{2}_s)_k},
\end{eqnarray*}
where $\tilde z^{k}=((z^{2})_1,\ldots,(z^{2})_k,(z^{1})_{k+1},\ldots,(z^{\bold p,1})_d)$, and $\Delta^z f=0$ else.\\
Then $\delta Y^\bold p$ solves the linear BSDE
\begin{eqnarray}
\delta Y_s&=&\delta \xi_T+\int_s^T\left(\Delta^z f_r\  \delta z_r+\delta f_r \right) dr-\int_s^T\delta z_r dB_r-\delta N_T+\delta N_s
\end{eqnarray}
Since $f^1$ is uniformly Lipschitz continuous in $z$, $\Delta^z f_t$ is bounded. Hence for any $s\in[t,T]$ the stochastic exponentials
\begin{eqnarray*}
\Gamma^s_r=\mathcal{E}\left(\int_s^r  \ \Delta^z f_u\ dB_u\right)\ \ \ \ r\in[s,T]
\end{eqnarray*}
are real positive martingales with expectation $1$ and by Girsanov (see e.g. Theorem III.3.24 \cite{JaShi}) the solution of the linear BSDE (A.8) is given by
\begin{eqnarray*}
\delta Y_s=\mathbb{E}\left[ \delta \xi \ \Gamma^s_T+\int_s^T \Gamma^s_r\ \delta f_r\ dr|\mathcal{F}_s \right].
\end{eqnarray*}
Thus $\delta Y_s\geq 0$ almost surely for any time $s\in[0,T]$.
\end{proof}
\ \\
\ \\
\textit{Acknowledgements} 
I express my gratitude to Rainer Buckdahn, Pierre Cardaliaguet and Catherine Rainer for helpful discussions and valuable comments.
\ \\



\bibliographystyle{model1a-num-names}
\bibliography{<your-bib-database>}



\end{document}